\numberwithin{equation}{section}
\theoremstyle{plain}
\newtheorem{Stelling}{Theorem}[section]
\newtheorem{Lemma}[Stelling]{Lemma}
\newtheorem{propositie}[Stelling]{Proposition}
\newtheorem{Corollary}[Stelling]{Corollary}
\theoremstyle{definition}
\newtheorem{Def}[Stelling]{Definition}
\newtheorem{Remark}[Stelling]{Remark}
\providecommand{\eps}{\varepsilon}
\newcommand{\E}{\mathbb{E}}
\newcommand{\id}{\operatorname{id}}
\newcommand{\Unif}{\operatorname{Unif}}
\newcommand{\MLE}{\operatorname{MLE}}
\newcommand{\KL}{\operatorname{KL}}
\newcommand{\mA}{\mathcal{A}}
\newcommand{\mD}{\mathcal{D}}
\newcommand{\mF}{\mathcal{F}}
\newcommand{\mG}{\mathcal{G}}
\newcommand{\mH}{\mathcal{H}}
\newcommand{\mS}{\mathcal{S}}
\renewcommand{\bf}{\mathbf{f}}
\newcommand{\bg}{\mathbf{g}}
\newcommand{\bp}{\mathbf{p}}
\newcommand{\bq}{\mathbf{q}}
\newcommand{\bv}{\mathbf{v}}
\newcommand{\bfm}{\mathbf{m}}
\newcommand{\bG}{\mathbf{G}}
\newcommand{\bu}{\mathbf{u}}
\newcommand{\bx}{\mathbf{x}}
\newcommand{\bX}{\mathbf{X}}
\newcommand{\wbX}{\widetilde{\mathbf{X}}}
\newcommand{\by}{\mathbf{y}}
\newcommand{\bY}{\mathbf{Y}}
\newcommand{\wbY}{\widetilde{\mathbf{Y}}}
\newcommand{\bPhi}{\bm{\Phi}}
\newcommand{\bpsi}{\bm{\psi}}
\newcommand{\bgamma}{\bm{\gamma}}
\newcommand{\cmmnt}[1]{\ignorespaces}
\newcommand{\1}{\mathbbm{1}}
\DeclarePairedDelimiter\ceil{\lceil}{\rceil}
\DeclarePairedDelimiter\floor{\lfloor}{\rfloor}
\tikzset{InterPoint/.style={black!50}}
\tikzset{Fr/.style={thick, black}}
\tikzset{Hr/.style={thick, black!40!white}}
\tikzset{help lines/.style={very thin, black!20,step=0.25}}
\tikzset{YAxis/.style={->}}
\tikzset{XAxis/.style={->}}
\newcommand\Square[1]{+(-#1,-#1) rectangle +(#1,#1)}
\begin{document}

\title{Convergence rates of deep ReLU networks for multiclass classification}
\author{%
    Thijs Bos\footnotemark[1] \footnotemark[3]%
    \ \ and Johannes Schmidt-Hieber\footnotemark[2] \footnotemark[3]%
}
  \footnotetext[1]{Leiden University} 
  \footnotetext[2]{University of Twente}
  \footnotetext[3]{The research has been supported by the NWO/STAR grant 613.009.034b and the NWO Vidi grant VI.Vidi.192.021.}
\date{}
\maketitle

\begin{abstract}

For classification problems, trained deep neural networks return probabilities of class memberships. In this work we study convergence of the learned probabilities to the true conditional class probabilities. More specifically we consider sparse deep ReLU network reconstructions minimizing cross-entropy loss in the multiclass classification setup. Interesting phenomena occur when the class membership probabilities are close to zero. Convergence rates are derived that depend on the near-zero behaviour via a margin-type condition.
\end{abstract}

\paragraph{Keywords:} convergence rates, ReLU networks, multiclass classification, conditional class probabilities, margin condition.

\textbf{MSC 2020:} Primary: 62G05; secondary: 63H30, 68T07

\section{Introduction}
The classification performance of a procedure is often evaluated by considering the percentage of test samples that is assigned to the correct class. The corresponding loss for this performance criterion is called the $0$-$1$ loss. Theoretical results for this loss are often related to the the margin condition \cite{mammen1999,tsybakov2004optimal,PlugInRatesMarginCondition}, which allows for fast convergence rates. Empirical risk minimization with respect to the non-convex $0$-$1$ loss is computationally hard and convex surrogate losses are used instead, see for example \cite{bartlett2006convexity,TariganGeerSVMMultiHinge}. More recently, similar results have been obtained for deep neural networks. This includes results for standard deep neural networks in combination with the hinge and logistic loss as surrogate losses \cite{kim2018fast}, as well as results for deep convolutional neural networks with the $L_2$ loss \cite{kohler2020rate} and  logistic loss \cite{kohler2020statistical} as surrogate losses.

Trained neural networks provide more information than just a guess of the class membership. For each class and each input, they return an estimate for the probability that the true label is in this class. For an illustration, see for example Figure 4 in the seminal work \cite{ImagenetClassification}. In applications it is often important how certain a network is about class memberships, especially in safety-critical systems where a wrong decision can have serious consequences such as automated driving \cite{SelfDrivingEndToEnd} and AI based disease detection \cite{leibig2017leveraging,DeepLearningMedicalImaging}. In fact, the conditional class probabilities provide us with a notion of confidence. If the probability of the largest class is nearly one, it is likely that this class is indeed the true one. On the other hand, if there is no clear largest class and the conditional class probabilities of several classes are close to each other, it might be advisable to let a human examine the case instead of basing the decision only on the outcome of the algorithm.

To evaluate how fast the estimated conditional class probabilities of deep ReLU networks approach the true conditional class probabilities, we consider in this work convergence with respect to the cross-entropy (CE) loss. If the conditional class probabilities are bounded away from zero or one, the problem is related to regression and density estimation. Therefore, it seems that one could simply modify the existing proofs on convergence rates for deep ReLU networks in the regression context under the least squares loss \cite{NonParametricRegressionReLU,bauer2019}. This does, however, not work since the behaviour of the CE loss differs fundamentally from that of the least squares loss for small conditional class probabilities. The risk associated with the CE loss is the expectation with respect to the input distribution of the Kullback-Leibler divergence of the conditional class probabilities. If an estimator becomes zero for one of the conditional class probabilities while the underlying conditional class probability is positive, the risk can even become infinite, see Section \ref{sec.StatModel}. In many applications where deep learning is state-of-the-art, the covariates contain nearly all information about the label and hence the conditional class probabilities are close to zero or one. For example in image classification it is often clear which object is shown on a picture. To deal with the behaviour near zero, we introduce a truncation of the CE loss function. This allows us to obtain convergence rates without bounding either the true underlying conditional class probabilities or the estimators away from zero. Instead our rates depend on an index quantifying the behaviour of the conditional class probabilities near zero. Convergence rates and the condition on the conditional class probabilities can be found in Section \ref{sec.results}.

\textit{Notation:} We denote vectors and vector valued functions by bold letters. For two vector valued functions $\bf=(f_1, \ldots, f_d)$ and $\bg=(g_1, \ldots, g_d)$ mapping $\mD$ to $\mathbb{R}^d,$ we set
$\|\bf-\bg\|_{\mD,\infty} := \big\| \max_{j=1,\ldots, d}|f_j(\bx)-g_j(\bx)|\big\|_{L^\infty(\mD)}.$
If it is clear to which domain $\mD$ we refer to, we also simply write $\|\bf-\bg\|_{\infty}.$
For a vector $\bv=(v_1,\dots,v_m)$ and a matrix $W=(W_{i,j})_{i=1,\dots,n;j=1,\dots,m}$ we define the maximum entry norms as $\|\bv\|_{\infty}:=\max_{i=1,\dots,m}|v_i|$ and $\|W\|_{\infty}:=\max_{i=1,\dots,n}\max_{j=1,\dots,m}|W_{i,j}|.$ The counting `norm' $\|\bv\|_{0}$, $\|W\|_{0}$ is the number of nonzero entries in the vector $\bv$ and matrix $W,$ respectively. For a vector $\bv=(v_1,\ldots, v_r)^\top$ and $g$ a univariate function, we write $g(\bv):=(g(v_1), \ldots, g(v_r))^\top.$ We often apply this to the activation function or the logarithm $g(u)=\log(u).$ Similarly, we define for two vectors of the same length $\bv,\bv',$ $\log(\bv/\bv')=\log(\bv)-\log(\bv').$ For any natural number $\gamma,$ we set $0\log^{\gamma}(0):=0$.
For a real number $x\in\mathbb{R},$ $\floor{x}$ is the largest integer $<x$ and $\ceil{x}$ is the smallest integer $\geq x$.
A $K$-dimensional standard basis vector is a vector of length $K$ that can be written as $(0,\ldots, 0,1,0,\ldots,0)^\top.$ We use $\mathcal{S}^K$ to denote the $(K-1)$-simplex in $\mathbb{R}^K$, that is, $\mathcal{S}^K=\{\mathbf{v}\in\mathbb{R}^K:\sum_{k=1}^Kv_k=1,v_k\geq 0, k=1,\dots,K\}.$ For two probability measures $P$ and $Q$, the Kullback-Leibler divergence $\KL(P,Q)$ is defined as
$\KL(P,Q):=\int\log(dP/dQ)\,dP$ if $P$ is dominated by $Q$ and as $\KL(P,Q):=\infty$ otherwise.

\section{The multiclass classification model}\label{sec.StatModel}
 In multiclass classification with $K\geq 2$ classes and design on $[0,1]^d$, we observe a dataset $\mD_n=\big\{(\bX_i,\bY_i): i=1, \ldots,n\big\}$ of $n$ i.i.d. copies of pairs $(\mathbf{X},\mathbf{Y})$ with design/input vector $\mathbf{X}$ taking values in $[0,1]^d$ and the corresponding response vector $\mathbf{Y}$ being one of the $K$-dimensional standard basis vectors. The response decodes the label of the class: the output $\bY$ is the $k$-th standard basis vector if the label of the $k$-th class is observed. As a special case, for binary classification the output is decoded as $(1,0)^T$ if the first class is observed and as $(0,1)^T$ if the second class is observed. We write $\mathbb{P}$ for the joint distribution of the random vector $(\mathbf{X},\mathbf{Y})$ and $\mathbb{P}_{\mathbf{X}}$ for the marginal distribution of $\mathbf{X}.$ The conditional probability $\mathbb{P}_{\bY|\bX}$ exists since $\bY$ is supported on finitely many points.  

An alternative model is to assume that each of the $K$ classes is observed roughly $n/K$ times. To derive statistical risk bounds, there is hardly any difference and the fact that the i.i.d. model generates with small probability highly unbalanced designs will not change the analysis.

The task is now to estimate/learn from the dataset $\mD_n$ the probability that a new input vector $\bX$ is in class $k$. If $\bY=(Y_1, \ldots, Y_K)^\top,$ the true conditional class probabilities are
$$p^0_{k}(\bx):=\mathbb{P}(Y_k=1|\mathbf{X}=\mathbf{x}), \quad k=1, \ldots, K.$$
For any $\bx$ this gives a probability vector, that is, $\sum_{k=1}^K p^0_{k}(\bx) =1.$ For notational convenience, we also define the vector of conditional class probabilities $\mathbf{p}_{0}(\bx):=(p^0_1(\bx),\cdots,p^0_K(\bx))^\top.$

To learn the conditional class probabilities from data, the commonly employed strategy in deep learning is to minimize the log-likelihood over the free parameters of a deep neural network using (stochastic) gradient descent. The likelihood for the conditional class probability vector $\mathbf{p}(\bx):=(p_1(\bx),\cdots,p_K(\bx))^\top$ is given by
$$\mathcal{L}( \mathbf{p} |\mD_n)=\prod_{i=1}^n\prod_{k=1}^K(p_k(\mathbf{X}_i))^{Y_{ik}},$$ with $Y_{ik}$ the $k$-th entry of $\mathbf{Y}_i.$ The negative log-likelihood or cross-entropy loss is then
\begin{align}\label{eq.CE}
	\bp \mapsto \ell\big(\bp, \mD_n\big):=-\frac{1}{n}\sum_{i=1}^n\sum_{k=1}^KY_{ik}\log(p_{k}(\mathbf{X}_i))
= -\frac{1}{n}\sum_{i=1}^n \bY_i^\top \log\big(\bp(\bX_i)\big),
\end{align}
where the logarithm in the last expression is taken component-wise as explained in the notation section above and $\bY^T \log(\bp(\bX_i))$ is understood as the scalar product of the vectors $\bY$ and $\log(\bp(\bX_i))$. The response vectors $\bY_i$ are standard basis vectors and in particular have nonnegative entries. The cross-entropy loss is thus always nonnegative and consequently defines indeed a proper statistical loss function. The cross-entropy loss is also convex, but not strictly convex and thus also not strongly convex, see \cite{van2014renyiKullbackConvex}, Chapter III-B for a proof. Throughout the article, we consider estimators/learners $\widehat \bp(\mathbf{X})$ with the property that $\widehat\bp(\bx)$ is a probability vector for all $\bx,$ or equivalently, $\widehat \bp(\bx)$ lies in the simplex $\mS^K$ for all $\bx.$ This is in particular true for neural networks with softmax activation function in the output layer. Recall that $\bp_0(\bx)$ is the vector of true class probabilities. If $(\bX,\bY)$ has the same distribution as each of the observations and is independent of the dataset $\mD_n,$ the statistical estimation risk associated with the CE loss is
$$\mathbb{E}_{\mD_n,(\bX,\bY)}\left[\bY^\top \log\Big(\frac{\bp_0(\bX)}{\widehat{\bp}(\bX)}\Big)\right]=\mathbb{E}_{\mD_n,\bX}\left[\bp_0(\bX)^\top \log\Big(\frac{\bp_0(\bX)}{\widehat{\bp}(\bX)}\Big)\right]=\mathbb{E}_{\mD_n,\bX} \big[\KL\big( \bp_0(\bX), \widehat \bp(\bX)\big) \big],$$
where the first equality follows from conditioning on the design vector $\bX$ and $\KL( \bp_0(\bX), \widehat \bp(\bX))$ is understood as the Kullback-Leibler divergence of the discrete distributions with probability mass functions $\bp_0(\bX)|\bX$ and $\widehat\bp(\bX)|(\bX,\mD_n).$

(Stochastic) gradient descent methods aim to minimize the CE loss \eqref{eq.CE} over a function class $\mF$ induced by the method. In the context of neural networks, this class is generated by all network functions with a pre-specified network architecture. In particular, the class is parametrized through the network parameters. The maximum likelihood estimator (MLE) is by definition any global minimizer of \eqref{eq.CE}. For some function classes the MLE can be given explicitly. In the extreme case that $\bx\mapsto \bp(\bx)$ is constraint to constant functions, the problem is equivalent to estimation of the probability vector of a multinomial distribution and the MLE is the average $\widehat \bp^{\MLE} = \tfrac 1n \sum_{i=1}^n\bY_i.$ The other extreme is the case of training error zero. If the observed design vectors are all different, training error zero is achieved whenever there exists $\bp\in \mF$ such that $\bY_i=\bp(\bX_i)$ for all $i=1,\ldots,n.$ This follows from $0\log(0)=1\log(1)=0.$ To achieve training error zero, we therefore need to interpolate all data points. Notice that misclassification error zero does not necessarily require interpolation of the data points. 

Already for small function classes, the MLE has infinite risk if the statistical risk is as defined above. The next lemma makes this precise.

\begin{Lemma}\label{lem.infinite_risk}
Consider binary classification ($K=2$) with uniform design $\bX\sim \Unif([0,1]^d)$ and $\bp_0(\bx):=(1/2,1/2)^\top$ for all $\bx \in [0,1]^d.$ Suppose that the function class $\mF$ contains an element $\bp(\bx)=(p_1(\bx),p_2(\bx))^\top$ such that $p_1(\bx)=0$ for all $\bx\in [0,1/3]^d$ and $p_1(\bx)=1$ for all $\bx\in [2/3,1]^d.$ Then, there exists a MLE $\widehat \bp$ with 
$$\mathbb{E}_{\mD_n,\bX}\left[\bp_0(\bX)^\top \log\Big(\frac{\bp_0(\bX)}{\widehat{\bp}(\bX)}\Big)\right]=\infty.$$
\end{Lemma}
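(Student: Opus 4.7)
The plan is to exhibit a specific minimizer $\widehat\bp$ of the empirical CE loss that equals $\bp^*\in\mF$ on the cubes $[0,1/3]^d$ and $[2/3,1]^d$, and then to observe that its risk is already infinite on a single event of positive probability. For any $\bx\in[0,1/3]^d$ one has $\widehat\bp(\bx)=\bp^*(\bx)=(0,1)^\top$, so $\bp_0(\bx)=(1/2,1/2)^\top$ places mass $1/2$ on a class to which $\widehat\bp(\bx)$ assigns mass $0$; the notation-section convention that $\KL(P,Q)=\infty$ when $P$ is not dominated by $Q$ then yields $\KL(\bp_0(\bx),\widehat\bp(\bx))=+\infty$ throughout $[0,1/3]^d$, and symmetrically on $[2/3,1]^d$ where $\widehat\bp(\bx)=(1,0)^\top$. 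Since $[0,1/3]^d\cup[2/3,1]^d$ has positive Lebesgue measure under the uniform design, this already forces the inner expectation $\E_{\bX}[\KL(\bp_0(\bX),\widehat\bp(\bX))]=+\infty$ for any such $\widehat\bp$.

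To produce this $\widehat\bp$ as a minimizer, I would exploit that $\ell(\bp,\mD_n)$ depends on $\bp$ only through its evaluations at the sample points. Consider the event
$$B=\big\{\bX_i\notin[0,1/3]^d\cup[2/3,1]^d\text{ for all }i=1,\ldots,n\big\},\qquad \mathbb{P}(B)=\bigl(1-2\cdot 3^{-d}\bigr)^n>0.$$
On $B$ neither cube contains any sample, so any function's values inside those cubes are invisible to $\ell(\cdot,\mD_n)$; in particular one may replace a generic minimizer of $\ell(\cdot,\mD_n)$ by the values of $\bp^*$ on these cubes without changing its empirical loss. Since $\bp^*\in\mF$, such a stitched function is a valid candidate for the MLE. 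Denoting it by $\widehat\bp$, combining with the previous paragraph gives
$$\E_{\mD_n,\bX}\big[\KL(\bp_0(\bX),\widehat\bp(\bX))\big]\ \ge\ \mathbb{P}(B)\cdot \E_{\bX}\big[\KL(\bp_0(\bX),\widehat\bp(\bX))\,\1\{\bX\in[0,1/3]^d\}\big]\ =\ \infty.$$

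The main obstacle I anticipate is precisely the stitching step: the hypothesis provides only the single element $\bp^*\in\mF$, and nothing is assumed about closure of $\mF$ under piecewise modification on the two cubes. The lemma, however, only asserts \emph{existence} of one pathological MLE rather than universality, which reduces the task to exhibiting $\widehat\bp=\bp^*$ whenever $\bp^*$ is itself a minimizer of $\ell(\cdot,\mD_n)$ on $B$ — for instance when $\mF$ is so small that $\bp^*$ is forced to be a minimizer, or under a tie-breaking rule among minimizers that selects $\bp^*$ whenever it achieves the minimum empirical loss. In any of these sub-cases the construction of $\widehat\bp=\bp^*$ is legitimate, and the two ingredients combine to give the claimed divergence of the expected risk.
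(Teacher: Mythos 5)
Your first paragraph is fine: since $\widehat\bp(\bx)$ assigns probability zero to a class that has true conditional probability $1/2$ on each corner cube, the inner expectation $\E_{\bX}[\KL(\bp_0(\bX),\widehat\bp(\bX))]$ is indeed $+\infty$ whenever $\widehat\bp$ agrees with $\bp^*$ on $[0,1/3]^d\cup[2/3,1]^d$. The gap is in how you arrange for such a function to actually be an MLE. Your event $B$ (no sample in either cube) makes the values of candidates on the cubes invisible, but it gives you no control over the empirical loss on the middle region, so there is no reason $\bp^*$ minimizes $\ell(\cdot,\mD_n)$ on $B$; and the "stitched" function obtained by overwriting a generic minimizer with $\bp^*$ on the cubes need not belong to $\mF$, since the lemma assumes only that $\mF$ contains the single element $\bp^*$, not closure under piecewise modification. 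Your proposed remedies (assuming $\mF$ is so small that $\bp^*$ must be a minimizer, or a tie-breaking rule that prefers $\bp^*$ \emph{when} it attains the minimum) add hypotheses not present in the statement, so the argument as written does not prove the lemma.

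The paper's fix is to condition on the opposite kind of event: all $n$ pairs $(\bX_i,\bY_i)$ fall in the two corner cubes with labels to which $\bp^*$ assigns conditional probability one (for $\bX_i\in[0,1/3]^d$ the class with $p^*$-probability $1$, and analogously on $[2/3,1]^d$). Each pair lands in this set with probability $3^{-d}\cdot\tfrac12+3^{-d}\cdot\tfrac12=3^{-d}$, so the event has probability $3^{-dn}>0$. On this event $\ell(\bp^*,\mD_n)=0$ by the convention $0\log(0)=1\log(1)=0$, and since the cross-entropy loss is nonnegative this is the global minimum over any class, so $\bp^*$ is an MLE no matter what else $\mF$ contains. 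Defining $\widehat\bp:=\bp^*$ on this event (and any minimizer elsewhere) and restricting the risk to the event yields $\infty\cdot 3^{-dn}=\infty$, which is exactly the conclusion; your divergence computation from the first paragraph then applies verbatim. In short, the missing idea is to make the data \emph{confirm} $\bp^*$'s degenerate predictions (zero empirical loss forces it to be a minimizer), rather than to make the data avoid the cubes, which leaves the minimizer undetermined there.
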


The assumption on the function class $\mF$ in the previous statement is quite weak and is satisfied if $\mF$ contains all piecewise constant conditional class probabilities with at most two pieces or all piecewise linear conditional class probabilities with at most three pieces. A large statistical risk occurs also in the case of zero training error or if the estimator $\widehat \bp$ severely underestimates the true probabilities.

To overcome the shortcomings of the Kullback-Leibler risk, one possibility is to regularize the Kullback-Leibler divergence and to consider for some $B>0$ the truncated Kullback-Leibler risk 
\begin{align*}
	R_B(\bp_0,\widehat \bp):=\E_{\mD_n,\bX} \Big[\KL_B\big( \bp_0(\bX), \widehat \bp(\bX)\big) \Big],
\end{align*}
where
\begin{align*}
	\KL_B\big( \bp_0(\bX), \widehat \bp(\bX)\big):=\bp_0(\bX)^\top\left(B\wedge\log\left(\frac{\bp_0(\bX)}{\widehat \bp(\bX)}\right)\right).
\end{align*}
The loss can be shown to be nonnegative whenever $B\geq 2,$ see Lemma \ref{lem.Hell_KL_bd} below. The threshold $B$ becomes void if the estimator $\widehat \bp$ is constrained to be in $[e^{-B},1]^K.$ If the estimator underestimates one of the true conditional class probabilities by a large factor, the logarithm becomes large and the threshold $B$ kicks in. For $B=\infty,$ we recover the Kullback-Leibler risk.

The idea of truncation is not new. \cite{MR1332570} truncates the log-likelihood ratio to avoid problems with this ratio becoming infinite. Their risk rates, however, are in terms of the Hellinger distance and the truncation does not appear in the statement of their results. For the truncated Kullback-Leibler risk the truncation plays a much more prominent role and appears as a multiplicative factor in the risk bounds. Lemma \ref{lem.Hell_KL_bd} provides insight in this difference: it shows that any upper bound for any $B$-truncated Kullback-Leibler divergence with $B\geq2$ provides an upper bound for the Hellinger distance. 

As we are interested in the multiclass classification problem in the context of neural networks, the function class $\mF$ is not convex. Due to this non-convexity, the training of neural networks does typically not yield a neural network achieving the global minimum. We therefore do not assume that the estimator is the MLE and use a parameter to quantify the difference between the achieved empirical risk and the global minimum: For any estimator $\widehat{\bp}$ taking values in a function class $\mF,$ we denote the difference between $\widehat{\bp}$ and the global minimum of the empirical risk over that entire class by
\begin{equation}\label{Eq: Difference Expected Empirical Risk Estimator vs Minimum}
\Delta_n(\bp_0,\widehat{\bp})
:=\mathbb{E}_{\mD_n}\Big[-\frac{1}{n}\sum_{i=1}^n \bY_i^\top \log(\widehat \bp(\mathbf{X}_i))-\min_{\bp\in\mF} -\frac{1}{n}\sum_{i=1}^n\bY_i^\top \log(\bp(\mathbf{X}_i))\Big].
\end{equation}

\subsection{Deep ReLU networks}

In this work we study deep ReLU networks with softmax output layer. Recall that the rectified linear unit (ReLU) activation function is $\sigma(x):=\max\{x,0\}$. For any vectors $\mathbf{v}=(v_1,\cdots,v_r)^\top, \mathbf{y}=(y_1,\cdots,y_r)^\top\in\mathbb{R}^r,$ write $\sigma_{\mathbf{v}}\mathbf{y}:=(\sigma(y_1-v_1),\dots, \sigma(y_r-v_r))^\top.$ To ensure that the output of the network is a probability vector over the $K$ classes, it is standard to apply the softmax function $$\bPhi= \bigg(\frac{e^{x_1}}{\sum_{j=1}^Ke^{x_j}}, \dots, \frac{e^{x_K}}{\sum_{j=1}^Ke^{x_j}}\bigg) :\mathbb{R}^K\rightarrow \mathcal{S}^K$$ in the last layer. We use $L$ to denote the number of hidden layers or depth of the neural network, and $\bfm=(m_0,\cdots,m_{L+1})\in\mathbb{N}^{L+2}$ to denote the widths, that is, the number of nodes in each layer of the network. A (ReLU) network architecture with output function $\bpsi:\mathbb{R}^{m_{L+1}}\rightarrow \mathbb{R}^{m_{L+1}}$ is a pair $(L,\bfm)_{\bpsi}$ and
a network with network architecture $(L,\bfm)_{\bpsi}$ is any function of the form
\begin{equation}\label{Eq: ReLU architecture function}
\bf:\mathbb{R}^{m_0}\rightarrow\mathbb{R}^{m_{L+1}},\ \ \mathbf{x}\mapsto \bf(\bx)=\bpsi W_{L}\sigma_{\mathbf{v}_L}W_{L-1}\sigma_{\mathbf{v}_{L-1}}\cdots W_{1}\sigma_{\mathbf{v}_{1}}W_0\mathbf{x},
\end{equation}
where $W_j$ is a $m_{j}\times m_{j+1}$ weight matrix and $\mathbf{v}_j\in\mathbb{R}^{m_j}$ is a shift vector. Throughout this paper we use the convention that $\bv_0:=(0,\dots,0)^\top \in \mathbb{R}^{m_0}.$

First we define neural network classes with the additional property that all network parameters are bounded in absolute value by one via
\begin{equation*}
\mF_{\bpsi}(L,\mathbf{m}):=\left\{\bf \text{ is of the form of \eqref{Eq: ReLU architecture function}}: \max_{j\in\{0,\cdots,L\}}(\|W_j\|_{\infty}\vee \|\mathbf{v}_j\|_{\infty})\leq 1 \right\},
\end{equation*}
with the maximum entry norm $\|\cdot \|_\infty$ as defined in the notation section above. As in previous work, we study estimation over $s$-sparse ReLU networks. Those are function classes of the form
\begin{equation*}
\begin{aligned}
\mF_{\bpsi}(L,\bfm,s)&:= \bigg\{\bf\in\mF(L,\bfm):\sum_{j=0}^{L}\|W_j\|_0+\|\mathbf{v}_j\|_0\leq s \bigg\},
\end{aligned}
\end{equation*}
where the counting norm $\|\cdot\|_{0}$ denotes the number of nonzero vector/matrix entries.

All neural network classes in this work have either softmax output activation $\bpsi=\bPhi$ or identity output activation $\bpsi=\id.$

\section{Main Results}\label{sec.results}

Interesting phenomena occur if the conditional class probabilities are close to zero or one. We now introduce a notion measuring the size of the set on which the conditional class probabilities are small. The index $\alpha$ will later appear in the convergence rate.

\begin{Def}(Small Value Bound)
Let $\alpha\geq 0$ and $\mH$ be a function class. We say that $\mH$ is $\alpha$-small value bounded (or $\alpha$-SVB) if there exists a constant $C>0,$ such that for all $\bp=(p_1,\dots,p_K)\in\mH$ it holds that
\begin{equation*}
    \mathbb{P}_{\bX}(p_k(\bX)\leq t)\leq Ct^{\alpha}, \quad \text{for all} \ t\in(0,1] \ \text{and all} \ k\in\{1,\dots,K\}. 
\end{equation*}
\end{Def}
The condition always holds for $\alpha=0$ and $C=1$. If $\mathbb{P}_{\bX}(p_k(\bX)=0)>0,$ the condition does not hold for $\alpha>0.$
If all functions in a class are lower bounded by a constant $B_0$, the class is $\alpha$-SVB for any $\alpha$ with constant $C=B_0^{-\alpha}$. More generally, the index $\alpha$ is completely determined by the behaviour near zero: If for some function class there exists some $0<\tau\ll 1,$ so that the bound holds for $\alpha$ and for all $t\in(0,\tau],$ then replacing $C$ by $C'=\max\{C,\tau^{-\alpha}\}$ guarantees that $C'\tau^{\alpha}\geq 1$, which in turn implies that the function class is $\alpha$-SVB. Moreover, if a function class is $\alpha$-SVB, then it is also $\alpha^*$-SVB for all $\alpha^*\leq \alpha$. This follows immediately by noticing that $t^{\alpha^*}\geq t^{\alpha}$ for all $t\in(0,1].$ 
Increasing the index makes the small value bound condition thus more restrictive.

To show that the definition of the small value bound makes sense, we have to check that for any $\alpha>0,$ there exist conditional class probabilities that are $\alpha$-SVB for that $\alpha,$ but are not $\alpha^*$-SVB for any larger $\alpha^*>\alpha.$ To see this, consider the case that $X$ is uniformly distributed on $[0,1],$ and that there are three classes $K=3.$ For given $\alpha>0,$ define the function $\bp_{\alpha}:[0,1]\rightarrow \mathcal{S}^3$ as $p_1(x)=\min\{x^{1/\alpha},1/3\}$, $p_2(x)=1/3$ and $p_3(x)=1-p_1(x)-p_2(x)=2/3-\min\{x^{1/\alpha},1/3\}$. Since $p_2(x),p_3(x)\geq 1/3,$ we have for $k=2,3$ that $\mathbb{P}_{X}(p_k(X)\leq t)\leq (3t)^{\alpha}.$
When $k=1$, it holds for $t\leq 1/3$ that
$\mathbb{P}_{X}(p_1(X)\leq t)=\mathbb{P}_{X}(X^{1/\alpha}\leq t)=\mathbb{P}_{X}(X\leq t^{\alpha})=t^{\alpha}.$ Hence $\mathbb{P}_{X}(p_k(X)\leq t)\leq (3t)^{\alpha}$ for $k=1,2,3,$ so  $\bp_{\alpha}$ is $\alpha$-SVB with constant $3^{\alpha}.$ Now we show that this function is not $\alpha^*$-SVB for any $\alpha^*>\alpha$. Let $\alpha^*>\alpha,$ then for every constant $C>0,$ there exists a $\tau_C\in(0,1/3)$ such that $C(\tau_C)^{\alpha^*}<(\tau_C)^{\alpha}=\mathbb{P}_{X}(p_1(X)\leq \tau_C).$ Since $C$ is arbitrary, $\bp_{\alpha}$ is not $\alpha^*$-SVB.

The following theorem shows the influence of the index $\alpha$ in the small value bound on the approximation rates.

\begin{Stelling}\label{S:ApproxUnderCondA}
If the function class is $\alpha$-SVB with constant $C,$ then, for any approximating function $\bp=(p_1,\dots,p_k):[0,1]^d\rightarrow \mS^K$ satisfying $\|\bp-\bp_0\|_{\infty}\leq C_1/M,$ and $\min_k \inf_{\bx\in [0,1]^d} p_k(\bx)\geq 1/M,$ for some constant $C_1,$ it holds that
\begin{equation*}
    \mathbb{E}_{\bX}\left[(\bp_0(\bX))^\top\log\left(\frac{\bp_0(\bX)}{\bp(\bX)}\right)\right]\leq 
    CK\frac{(C_1+1)^{2+(\alpha\wedge 1)}}{M^{1+(\alpha\wedge 1)}}
    \Big(
    1+\frac{\1_{\{\alpha<1\}}}{1-\alpha}
    +\log(M)\Big).
\end{equation*}
\end{Stelling}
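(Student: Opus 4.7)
The plan is to split the coordinatewise summand $p_k^0\log(p_k^0/p_k)$ at a threshold on $p_k^0$, Taylor-expand on the ``safe'' side, use a crude ratio bound on the other, and only then take expectations and feed in the small-value bound. First I would invoke the monotonicity remark preceding the theorem to reduce to $\alpha\leq 1$, since any $\alpha$-SVB class is also $(\alpha\wedge 1)$-SVB with the same constant $C$; this is what ultimately produces the $\alpha\wedge 1$ in the statement. Write $\delta_k:=p_k-p_k^0$, so that $|\delta_k(\bx)|\leq C_1/M$ pointwise and $\sum_{k=1}^K\delta_k\equiv 0$ because $\bp,\bp_0\in\mS^K$. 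Choose the threshold $\tau:=(C_1+1)/M$ and split $p_k^0\log(p_k^0/p_k)=[\cdots]\1_{\{p_k^0>\tau\}}+[\cdots]\1_{\{p_k^0\leq\tau\}}$.

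On the ``large'' region $\{p_k^0>\tau\}$ the ratio $u:=\delta_k/p_k^0$ satisfies $|u|\leq C_1/(C_1+1)=:\rho<1$, and integrating $1/(1+s)$ yields the standard inequality $-\log(1+u)\leq -u+u^2/(2(1-\rho))$ for $|u|\leq\rho$. Multiplying by $p_k^0$ gives
\begin{equation*}
p_k^0\log\!\big(p_k^0/p_k\big)\1_{\{p_k^0>\tau\}}\ \leq\ \bigg(-\delta_k+\frac{(C_1+1)\,\delta_k^{2}}{2\,p_k^0}\bigg)\1_{\{p_k^0>\tau\}}.
\end{equation*}
Summing over $k$ and using $\sum_k\delta_k\equiv 0$ converts $\sum_k\E[-\delta_k\1_{\{p_k^0>\tau\}}]$ into $\sum_k\E[\delta_k\1_{\{p_k^0\leq\tau\}}]$, which via $|\delta_k|\leq C_1/M$ and SVB is at most $KCC_1(C_1+1)^{\alpha}/M^{1+\alpha}$. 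The quadratic part reduces, via $\delta_k^{2}\leq C_1^{2}/M^{2}$, to controlling $\E[1/p_k^0\,\1_{\{p_k^0>\tau\}}]$, and a layer-cake computation plus SVB yields $\E[1/p_k^0\,\1_{\{p_k^0>\tau\}}]\leq 1+C\int_{1}^{1/\tau}u^{-\alpha}\,du$. This is $\leq 1+C\tau^{\alpha-1}/(1-\alpha)$ for $\alpha<1$ and $\leq 1+C\log(1/\tau)\leq 1+C\log M$ for $\alpha=1$; plugging in $\tau=(C_1+1)/M$, both cases slot into the stated shape.

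On the ``small'' region $\{p_k^0\leq\tau\}$, the lower bound $p_k\geq 1/M$ forces $p_k^0/p_k\leq\tau M=C_1+1$, so $p_k^0\log(p_k^0/p_k)\leq p_k^0\log(C_1+1)$ (a valid upper bound also when the left-hand side is negative). Taking expectation and using $\E[p_k^0\,\1_{\{p_k^0\leq\tau\}}]\leq\tau\,\mathbb{P}_{\bX}(p_k^0\leq\tau)\leq C\tau^{1+\alpha}$ contributes $KC\log(C_1+1)(C_1+1)^{1+\alpha}/M^{1+\alpha}$, which is absorbed via $\log(C_1+1)\leq C_1+1$. Adding the three pieces, the dominant quadratic contribution bounds the total by
\begin{equation*}
\lesssim\ KC\,\frac{(C_1+1)^{2+\alpha}}{M^{1+\alpha}}\bigg(1+\frac{\1_{\{\alpha<1\}}}{1-\alpha}+\log M\bigg),
\end{equation*}
matching the claim. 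The main obstacle is the layer-cake estimate for $\E[1/p_k^0\,\1_{\{p_k^0>\tau\}}]$: without the preliminary reduction to $\alpha\leq 1$ one has to juggle three separate cases and recover the right $\alpha$-dependence in each; the reduction is the key trick that both cleans up the constants and explains why only $\alpha\wedge 1$ appears in the exponents.
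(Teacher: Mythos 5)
Your argument is correct, but it takes a genuinely different route from the paper's proof. The paper first bounds the Kullback--Leibler divergence by the $\chi^2$-divergence, then uses the two hypotheses to get the pointwise bound $p_k(\bx)\geq \frac{p_k^0(\bx)}{C_1+1}\vee\frac1M$, splits at the threshold $(C_1+1)/M$ on $p_k^0$, and controls $\int_{\{p_k^0\geq (C_1+1)/M\}}(p_k^0)^{-1}\,d\mathbb{P}_{\bX}$ by the layer-cake/SVB estimate of Proposition \ref{P:UpperBoundIk}. You instead expand the logarithm directly, $-\log(1+u)\leq -u+u^2/(2(1-\rho))$ with $u=\delta_k/p_k^0$ on $\{p_k^0>\tau\}$, cancel the first-order terms via $\sum_k\delta_k\equiv0$ (re-routing them to the small region, where SVB applies), and treat $\{p_k^0\leq\tau\}$ by the crude ratio bound $p_k^0/p_k\leq C_1+1$; in effect you re-derive the $\chi^2$-type quadratic bound by hand instead of citing it. The decisive quantitative ingredient is the same in both proofs, namely the layer-cake bound on $\E_{\bX}[\1_{\{p_k^0(\bX)>\tau\}}/p_k^0(\bX)]$, which you redo yourself (your version, starting the $u$-integral at $1$, is a minor variant of Proposition \ref{P:UpperBoundIk}); and your preliminary reduction to $\alpha\wedge1$ is a convenience rather than a necessity, since the paper achieves the same effect by applying SVB at exponent $\alpha\wedge1$ and using the logarithmic case of the proposition for $\alpha\geq1$. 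What each approach buys: the paper's route is shorter and delegates work to a standard inequality (KL $\leq\chi^2$), while yours is self-contained and exploits the simplex structure; the constants come out comparable. One bookkeeping caveat: the theorem asserts the bound with the explicit factor $CK(C_1+1)^{2+(\alpha\wedge1)}M^{-1-(\alpha\wedge1)}$, not an unspecified $\lesssim$; your three pieces do fit under that factor (the linear and small-region terms each contribute at most $1$ to the bracket, the quadratic term at most $\tfrac12+\tfrac{\1_{\{\alpha<1\}}}{2(1-\alpha)}+\tfrac12\log M$), once one notes that for $M\leq C_1+1$ the claim is trivial because the left-hand side is at most $\log M$ (as $p_k\geq 1/M$ and $p_k^0\le 1$) while the right-hand side exceeds $M$, so upgrading your $\lesssim$ to the stated constant is pure bookkeeping.
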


The proof for this result bounds the Kullback-Leibler divergence by the $\chi^2$-divergence and then distinguishes the cases where the conditional class probabilities are smaller and larger than $1/M.$ Both terms can be controlled via the $\alpha$-SVB condition. The convergence rate becomes faster in $M$ up to $\alpha=1$ and is $\log(M)/M^2$ for all $\alpha\geq 1.$

The small value bound has a similar flavor as Tsybakov's margin condition, which can be stated as $\mathbb{P}_{\bX}(0<|p_0(\bX)-1/2|\leq t)\leq Ct^{\gamma}$ for binary classification \cite{PlugInRatesMarginCondition}. The margin condition provides a control on the number of data points that are close to the decision boundary $\{\bx:p_0(\bx)=1/2\}$ and that are therefore hard to classify correctly. Differently speaking, the problem becomes easier if the conditional class probabilities are either close to zero or one. This is in contrast with the small value bound, which will lead to faster convergence rates when the true conditional class probabilities are mostly away from zero. This difference is due to the loss: the $0$-$1$ loss only cares about predicting the class membership, while the CE loss measures how well the conditional class probabilities are estimated and puts additional emphasis on small conditional class probabilities by considering the ratio between prediction and truth.

To obtain estimation rates, we further assume that the underlying true conditional class probability function $\bp_0$ belongs to the class of H\"older-smooth functions.
For $\beta>0$ and $D\subset \mathbb{R}^m$, the ball of $\beta$-H\"{o}lder functions with radius $Q$ is defined as
\begin{equation*}
    C^{\beta}(D,Q)=\left\{f:D\rightarrow \mathbb{R}: \sum_{\bgamma:\|\bgamma\|_1<\beta}\|\partial^{\bgamma}f\|_{\infty}+\sum_{\bgamma:\|\bgamma\|_1=\floor{\beta}}\sup_{\bx,\by\in D, \bx\neq\by}\frac{|\partial^{\bgamma}f(\bx)-\partial^{\bgamma}f(\by)|}{\|\bx-\by\|^{\beta-\floor{\beta}}_{\infty}}\leq Q \right\},
\end{equation*}
where $\partial^{\bgamma}=\partial^{\gamma_1}\dots \partial^{\gamma_m},$ with $\bgamma=(\gamma_1,\dots, \gamma_m)\in\mathbb{N}^m.$
The function class $\mG(\beta,Q)$ of $\beta$-smooth conditional class probabilities is then defined as
\begin{equation*}
    \mG(\beta,Q)=\left\{\bp=(p_1,\cdots,p_K)^{\top}:[0,1]^d\rightarrow \mathcal{S}^K: p_k\in C^{\beta}([0,1]^d,Q), k=1,\dots, K\right\}.
\end{equation*} 
If $Q<1/K,$ then, $\|\bp\|_{\infty}\leq Q$ implies $\sum_{k=1}^K p_k\leq KQ<1,$ so we need H\"older radius $Q\geq 1/K$ for this class to be non-empty.
Combining the smoothness and the small value bound, we write $\mG_{\alpha}(\beta,Q)=\mG_{\alpha}(\beta,Q,C)$ for all functions in $\mG(\beta,Q)$ that satisfy the  $\alpha$-SVB condition with constant C. For large enough radius $Q$ and constant $C,$ the class $\mG_{\alpha}(\beta,Q)$ is non-empty. For example, the constant function $\bp=(1/K,\dots,1/K)$ is in $\mG_{\alpha}(\beta,Q)$ for any $\beta>0$ and $\alpha>0$ when $Q\geq 1/K$ and $C\geq K^{\alpha}$.

For $0\leq \alpha \leq 1$ the index from the SVB condition and $\beta$ the smoothness index, we introduce the rate
\begin{equation*}
    \phi_n= K^{\frac{(1+\alpha)\beta+(3+\alpha)d}{(1+\alpha)\beta+d}} n^{-\frac{(1+\alpha)\beta}{(1+\alpha)\beta+d}}.
\end{equation*}

\begin{Stelling}[Main Risk Bound]\label{S: Main Risk Bound}
Consider the multiclass classification model with $\bp_0\in\mG_{\alpha}(\beta,Q),$ $0\leq \alpha \leq 1,$ and $n>1.$ Let $\widehat{\bp}$ be an estimator taking values in the network class $\mF_{\bPhi}(L,\bfm,s)$ satisfying
\begin{compactitem}
    \item [(i)] $A(d,\beta)\log_2(n)\leq L\lesssim n\phi_n,$
    \item[(ii)] $\min_{i=1,\cdots,L} m_i \gtrsim n\phi_n,$
    \item[(iii)] $s\asymp n\phi_n\log(n)$
\end{compactitem}
for a suitable constant $A(d,\beta).$ If $n$ is sufficiently large, then, there exist constants $C',C''$ only depending on $\alpha, C, \beta, d$, such that whenever $\Delta_n(\widehat{\bp},\bp_0)\leq C'' B\phi_nL\log^2(n)$ then
\begin{equation*}
    R_B(\bp_0,\widehat{\bp})\leq C'B \phi_nL\log^2(n).
\end{equation*}
\end{Stelling}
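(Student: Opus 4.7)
The plan is to combine a standard oracle-inequality scheme with an approximation argument specialized to the softmax/truncated-KL setting. Starting from the definition of $\Delta_n$, I would write
\begin{equation*}
R_B(\bp_0,\widehat\bp)
\;\le\; \underbrace{R_B(\bp_0,\widehat\bp)-\E_{\mD_n}\ell(\widehat\bp,\mD_n)+\E_{\mD_n}\ell(\bp_0,\mD_n)}_{\text{empirical-process term}}
\;+\;\inf_{\bp\in\mF}\mathbb{E}_{\mD_n}\bigl[\ell(\bp,\mD_n)-\ell(\bp_0,\mD_n)\bigr]
\;+\;\Delta_n(\bp_0,\widehat\bp),
\end{equation*}
where $\mF=\mF_{\bPhi}(L,\bfm,s)$. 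The second term is dominated by $\inf_{\bp\in\mF}\E_{\bX}\KL(\bp_0(\bX),\bp(\bX))$, i.e.\ the population approximation error; the first is handled by a covering/Bernstein argument using that the truncated loss is bounded by $B$. The goal is an inequality of the shape
$R_B(\bp_0,\widehat\bp)\lesssim \inf_{\bp\in\mF}\E_\bX\KL(\bp_0(\bX),\bp(\bX)) + B\,\frac{\mathcal{N}(\mF)}{n} + \Delta_n$, where $\mathcal{N}(\mF)$ denotes a log-covering number of $\mF$ at a scale $\asymp 1/n$.

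For the approximation error I would invoke a Yarotsky/Schmidt-Hieber-type construction: for any target precision $1/M$, each coordinate $p_{0,k}$ lying in $C^\beta([0,1]^d,Q)$ is approximated in sup-norm by a sparse ReLU subnetwork of depth $\asymp\log M$ and sparsity $\asymp M^{d/\beta}\log M$. Stacking $K$ such subnetworks, subtracting a common logit baseline of size $\log M$ before the softmax (which enlarges sparsity only by an additive $O(K)$), and applying $\bPhi$ yields a $\bp^{*}\in\mF$ with $\|\bp^{*}-\bp_0\|_\infty\le C_1/M$ and simultaneously $\min_k\inf_\bx p^{*}_k(\bx)\ge 1/M$. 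Theorem~\ref{S:ApproxUnderCondA} then gives
\begin{equation*}
\E_\bX\KL(\bp_0(\bX),\bp^{*}(\bX)) \;\le\; \frac{CK(C_1+1)^{2+(\alpha\wedge 1)}(1+\log M)}{M^{1+(\alpha\wedge 1)}}.
\end{equation*}
Choosing $M\asymp n^{\beta/((1+\alpha)\beta+d)}$ (with the appropriate $K$-power) makes $M^{d/\beta}\log M\asymp n\phi_n\log n$, which matches the sparsity requirement (iii), while the displayed bound becomes $\lesssim \phi_n\log n$. The depth and width conditions (i)–(ii) guarantee that this approximant fits inside $\mF$.

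For the stochastic term I would use the standard log-covering bound for $s$-sparse ReLU networks, $\log\mathcal{N}(\eps,\mF,\|\cdot\|_\infty)\lesssim sL\log(sL\max_i m_i/\eps)$, applied to the loss class $\{(\bx,\by)\mapsto\by^\top(B\wedge\log(\bp_0(\bx)/\bp(\bx))):\bp\in\mF\}$. Since the loss is bounded by $B$, a Bernstein-type deviation inequality applied with a peeling device (using that $\KL_B$ controls the Hellinger squared distance by the forthcoming Lemma~\ref{lem.Hell_KL_bd}, and therefore the variance of the loss is controlled by its expectation times $B$) produces the fast rate $B\cdot sL\log^2 n/n \asymp B\phi_n L\log^2 n$. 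Combining the three contributions under the hypothesis $\Delta_n\le C''B\phi_n L\log^2 n$ yields the target $R_B(\bp_0,\widehat\bp)\le C'B\phi_n L\log^2 n$.

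The main obstacle is the empirical-process step: the CE loss class is neither uniformly Lipschitz nor uniformly bounded without the truncation, so the argument must genuinely exploit the $B$-cap to make Bernstein applicable, and the local-variance control needed for the $1/n$ (rather than $1/\sqrt n$) rate must be routed through the Hellinger–$\KL_B$ comparison. A secondary but non-trivial point is the approximation step above: ensuring simultaneously the sup-norm bound $C_1/M$ and the pointwise floor $1/M$ on all $K$ coordinates while staying inside $\mF_\bPhi(L,\bfm,s)$ with the prescribed budget requires a careful softmax reparametrisation, and it is where the $K$-power in $\phi_n$ enters naturally.
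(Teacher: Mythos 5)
Your overall plan (oracle-type decomposition plus covering/Bernstein for the stochastic term, sparse ReLU approximation plus Theorem~\ref{S:ApproxUnderCondA} for the bias, then balance via $M\asymp K^{\frac{(2+\alpha)\beta}{(1+\alpha)\beta+d}}n^{\frac{\beta}{(1+\alpha)\beta+d}}$) is the same skeleton as the paper, which invokes Theorem~\ref{S: Main Oracle Inequality}, Lemma~\ref{L: Covering Number bound}, Lemma~\ref{L:ApproximationSoftmaxNetwork} and Theorem~\ref{S:ApproxUnderCondA}. However, there is a genuine gap at the step you yourself flag as ``secondary'': the construction of an approximant $\bp^*\in\mF_{\bPhi}(L,\bfm,s)$ with $\|\bp^*-\bp_0\|_\infty\le C_1/M$ and $\min_k\inf_{\bx}p_k^*(\bx)\ge 1/M$. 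You propose to approximate each $p_k^0$ in sup-norm by a ReLU subnetwork $H_k$ and then ``subtract a common logit baseline of size $\log M$ before the softmax.'' This does not work: the softmax is invariant under adding or subtracting a common constant to all logits, so the baseline changes nothing; and feeding the $H_k\approx p_k^0\in[0,1]$ themselves into $\bPhi$ produces outputs in which all coordinates are within a factor $e$ of each other, hence nowhere near $\bp_0$ when some class probabilities are close to $0$ or $1$ (exactly the regime this paper is about). To make the softmax output track $\bp_0$, the logits must approximate $\log p_k^0$ up to a common shift, and since $\bp_0$ is not bounded away from zero this cannot be obtained by a direct H\"older-approximation of the composite $\log\circ p_k^0$. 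The paper resolves this with a dedicated construction (Theorem~\ref{T:MSTLogNetworkResult} and Appendix~\ref{A:Proof of LogNetwork}): a ReLU network $G$ with $|e^{G(x)}-x|\le 4/M$ on $[0,1]$ and $G(x)\ge\log(4/M)$, composed with the H\"older approximants and then with $\bPhi$ (Lemma~\ref{L:ApproximationSoftmaxNetwork}), which is precisely what delivers both the sup-norm error $\lesssim K/M$ and the floor $1/M$ within the prescribed depth/width/sparsity budget. This construction is a substantial part of the paper's work and is assumed rather than supplied in your proposal.

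A secondary issue is the empirical-process step. The truncation caps the loss only from above: $Y_k\bigl(B\wedge\log(p_k^0/\widehat p_k)\bigr)$ can still be arbitrarily negative on the event $\{Y_k=1,\,p_k^0(\bX)\ \text{tiny}\}$, so ``the loss is bounded by $B$'' is not enough to run Bernstein directly. The paper's oracle inequality handles this by splitting off the region $\{p_k^0(\bX)\le C_n/n\}$ (Lemma~\ref{L: Bound for case of small p0}), working with the pseudometric $d_\tau$ at level $\tau=\log(C_ne^{-B}/n)$, and controlling moments via Lemma~\ref{L: Inequality to relate to the risk} rather than through the Hellinger comparison you suggest. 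Since Theorem~\ref{S: Main Oracle Inequality} is available, you could simply invoke it (as the paper does) instead of re-deriving the bound; as sketched, your own route would not close without these additional devices.
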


An explicit expression for the constant $A(d,\beta)$ can be derived from the proof. The risk bound depends linearly on $B$. Choosing, for instance, $B=O(\log(n))$ leads only to an additional logarithmic factor in the convergence rate. The risk bound grows with $K^{\frac{(1+\alpha)\beta+(3+\alpha)d}{(1+\alpha)\beta+d}}$ in the number of classes. Thus for large $\beta$, we obtain a near linear dependence on $K.$ The worst behavior occurs for $\alpha=1$ and $d$ large. Then the dependence on the number of classes is essentially of the order $K^4.$

When the estimator $\widehat{\bp}$ is guaranteed to have output in $[e^{-B},1]^K,$ the truncation parameter $B$ in the risk has no effect. The proof of the approximation properties is done by the construction of a softmax-network $\widehat{\bg}$ with the property that $\widehat{\bg}(\bx)\gtrsim K^{\frac{-(2+\alpha)\beta}{(1+\alpha)\beta+d}} n^{-\frac{\beta}{(1+\alpha)\beta+d}},$ for all $\bx \in [0,1]^d.$ This means that we can pick $B\asymp \log(n)$ such that $\widehat{\bg}(\bx)\geq e^{-B}$ and restrict the class $\mF_{\bPhi}(L,\bfm,s)$ to networks that are guaranteed to have output in $[e^{-B},1]^K$. The proof of Theorem \ref{S: Main Risk Bound} can be extended for this setting and implies a risk bound for the Kullback-Leibler risk of the form
$$\E_{\mD_n,\bX} \big[\KL\big( \bp_0(\bX), \widehat \bp(\bX)\big) \big]\leq C''' \phi_nL\log^3(n),$$ for some constant $C'''.$ Thus Theorem \ref{S: Main Risk Bound} provides us with rates for the Kullback-Leibler risk when the networks outputs are guaranteed to be sufficiently large, while still providing a bound for the truncated Kullback-Leibler risk when no such guarantee can be given.

When the input dimension $d$ is large, the obtained convergence rates become slow. A possibility to circumvent this curse of dimensionality is to assume additional structure on $\bp_0.$ For nonparametric regression,  \cite{horowitz2007, kohler2017, bauer2019, NonParametricRegressionReLU, kohler2020rate2} show that under a composition assumption on the regression function, neural networks can exploit this structure to obtain fast convergence rates that are unaffected by the curse of dimensionality. It is possible to incorporate such compositions assumptions here as well and to obtain similar convergence rates.

\subsection{Relationship with Hellinger distance}
The multiclass classification problem can be written as statistical model $(Q_{\bp},\bp \in \mF),$ where $\mF$ is the parameter space, $\bp$ is the unknown vector of conditional class probabilities and $Q_{\bp}$ denotes the data distribution if the data are generated from the conditional class probabilities $\bp.$ The squared Hellinger distance $H(P,Q)^2=\tfrac 12 \int (\sqrt{dP}-\sqrt{dQ})^2,$ with $P$ and $Q$ probability measures on the same probability space, induces in a natural way a loss function on such a statistical model by associating to the two parameters $\bp$ and $\bp'$ the loss $H(Q_{\bp},Q_{\bp'}).$ The Hellinger loss function has been widely studied in the context of nonparametric variations of the maximum likelihood principle, mainly for the related nonparametric density estimation problem, \cite{MR1332570, vdGeer2000, WeakConvergenceEmpProcesses}. The log-likelihood is closely related to the Kullback-Leibler divergence, which in turn is related to the Hellinger distance by the inequality $H(P,Q)^2\leq \KL(P,Q),$ see for example \cite{tsybakov2008introduction}. The Kullback-Leibler divergence cannot be upper bounded by the squared Hellinger distance in general, although there exists conditions under which such a bound can be established, see for example Theorem 5 of \cite{MR1332570} and Lemma \ref{lem.Hell_KL_bd} below.

In density estimation, the nonparametric MLE achieves in some regimes optimal rates with respect to the Hellinger distance for convex estimator classes or if the densities (or  sieve estimators) are uniformly bounded away from zero, see \cite{MR1105838, MR1332570} and Chapters 7 and 10 in \cite{vdGeer2000}. Neural network function classes are not convex and, as argued before, there are many applications in the deep learning literature, where the conditional class probabilities are very small or even zero. Thus, these general results are not applicable in our setting.

On the contrary, the convergence rates established above for the truncated Kullback-Leibler divergence imply convergence with respect to the Hellinger loss. This relationship is made precise in the next result. 

\begin{Lemma} \label{lem.Hell_KL_bd} Let $P$ and $Q$ be two probability measures defined on the same measurable space. For any $B\geq 2$,
\begin{align*}
	H^2(P,Q)\leq \frac 12 \KL_2(P,Q)\leq \frac 12 \KL_{B}(P,Q) \leq 2e^{B/2} H^2(P,Q).
\end{align*}
\end{Lemma}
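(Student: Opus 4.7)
The plan is to prove the three claimed inequalities separately; the first two are routine and the third requires more care. For $H^2 \leq \tfrac{1}{2}\KL_2$, I would establish the pointwise bound $2(1 - 1/\sqrt{\rho}) \leq 2 \wedge \log \rho$ valid for all $\rho > 0$. For $\rho \geq e^2$ the right-hand side equals $2 \geq 2(1 - 1/\sqrt{\rho})$; for $\rho \leq e^2$, an elementary analysis of $g(\rho) := \log\rho + 2/\sqrt{\rho} - 2$, using $g(1)=0$ and $g'(\rho) = (1-1/\sqrt{\rho})/\rho$ (vanishing only at $\rho=1$, a global minimum), shows $g \geq 0$. Substituting $\rho = p/q$, multiplying by $p$ and integrating then converts the left-hand side into $2\int(p - \sqrt{pq})\,d\mu = 2H^2$. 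The middle inequality $\KL_2 \leq \KL_B$ for $B \geq 2$ is immediate from the pointwise monotonicity $2 \wedge x \leq B \wedge x$ applied to $x = \log(dP/dQ)$ and integrated against $dP$.

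The bulk of the work is the upper bound $\tfrac{1}{2}\KL_B \leq 2 e^{B/2} H^2$. I would apply the elementary inequality $\log z \leq 2(\sqrt z - 1)$ to the truncated ratio $z = (p/q) \wedge e^B$, obtaining $B \wedge \log(p/q) \leq 2(\tilde u - 1)$ where $\tilde u := \sqrt{p/q} \wedge e^{B/2}$. Multiplying by $p$ and integrating,
\[
\KL_B \leq 2\int p(\tilde u - 1)\,d\mu = 2\int q u^2 (\tilde u - 1)\,d\mu,
\]
with $u := \sqrt{p/q}$ and $M := e^{B/2}$. Expanding $u^2 = 1 + 2(u-1) + (u-1)^2$ splits the right-hand side into three pieces, each controllable in terms of $H^2 = \tfrac{1}{2}\int q(u-1)^2\,d\mu$: (i) $\int q(\tilde u - 1)\,d\mu = \int q(u \wedge M)\,d\mu - 1 \leq \int \sqrt{pq}\,d\mu - 1 = -H^2$; (ii) $(u-1)(\tilde u - 1) \leq (u-1)^2$ by cases on $u \leq M$ and $u > M$, so $2\int q(u-1)(\tilde u - 1)\,d\mu \leq 4H^2$; (iii) $\tilde u - 1 \leq M - 1$ on $\{u \geq 1\}$ while $(u-1)^2(\tilde u - 1) \leq 0$ on $\{u < 1\}$, so $\int q(u-1)^2(\tilde u - 1)\,d\mu \leq 2(M-1)H^2$. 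Summing gives $\KL_B \leq (4e^{B/2} + 2) H^2$.

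The main obstacle is producing the sharp constant $4e^{B/2}$ rather than $4e^{B/2} + 2$. Because no pointwise inequality $p(B \wedge \log(p/q)) \leq C_B (\sqrt p - \sqrt q)^2$ can hold (the ratio $p \log(p/q)/(\sqrt p - \sqrt q)^2$ blows up as $p \to q$), the argument is intrinsically non-pointwise. Shaving the residual $2H^2$ will require exploiting the stronger identities $\int q(\tilde u - 1)\,d\mu = -H^2 - \int q(u-M)_+\,d\mu$ and $\int q(u-1)^2(\tilde u - 1)\,d\mu = \int q(u-1)^3\,d\mu - \int_{\{u > M\}} q(u-1)^2 (u-M)\,d\mu$, so that the extra negative mass of $\int q(u - M)_+\,d\mu$ together with the negative contribution of $\int_{\{u<1\}} q(u-1)^3\,d\mu$ can absorb the positive cross-terms. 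The hypothesis $B \geq 2$ (equivalently $M \geq e > 1$) is precisely what makes this algebra close. I expect this delicate bookkeeping to be the technical heart of the proof.
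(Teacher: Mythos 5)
Your first two inequalities are correct and essentially coincide with the paper's treatment: your pointwise bound $2(1-1/\sqrt{\rho})\le 2\wedge\log\rho$ is the same elementary inequality the paper uses in the form $y-\sqrt{y}\le\tfrac12 y\log y$, and the monotonicity $\KL_2\le\KL_B$ is immediate in both. (A minor point: you tacitly work as if $P\ll Q$; the paper first takes the Lebesgue decomposition and sets $dP/dQ:=\infty$ on the singular part, but your pointwise inequalities survive that convention, so this is cosmetic.)

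The genuine gap is in the third inequality. What you actually prove is $\tfrac12\KL_B(P,Q)\le\big(2e^{B/2}+1\big)H^2(P,Q)$, which is strictly weaker than the claimed $2e^{B/2}H^2(P,Q)$, and your closing paragraph about shaving the residual $2H^2$ is a plan, not an argument. Indeed, the region-wise bookkeeping you sketch does not close pointwise: with $u=\sqrt{p/q}$, $\tilde u=u\wedge M$, $M=e^{B/2}$, the target after your identities becomes $\int q(u-1)^3-\int_{\{u>M\}}qu^2(u-M)\le\tfrac{2M-3}{2}\int q(u-1)^2$, and on $\{u>M\}$ the integrand $(u-1)^3-u^2(u-M)$ equals $(M-1)^3$ at $u=M$, exceeding $\tfrac{2M-3}{2}(M-1)^2$; on $\{1\le u\le M\}$ the natural bound gives factor $M-1>\tfrac{2M-3}{2}$ as well, so the deficit must be recovered from global cancellations you have not exhibited. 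The root cause is that $\log z\le 2(\sqrt z-1)$ is lossy at second order near $z=1$. The paper avoids this by using the sharper expansion $y\log y\le y-1+\tfrac12(y-1)^2/(y\wedge1)$, equivalently $x\log x\le 2(x-\sqrt x)+(1\vee\sqrt x)(\sqrt x-1)^2$, on the non-truncated region $\{p/q\le e^B\}$: this yields $\KL_B\le 2e^{B/2}H^2+2\int_{\{p/q\le e^B\}}(p-\sqrt{pq})+B\int_{\{p/q>e^B\}}dP$, after which a case distinction on the sign of $\int_{\{p/q\le e^B\}}(p-\sqrt{pq})$ is made; the truncated mass is controlled via $H^2\ge\tfrac12\int_{\{p/q\ge e^B\}}p\,(1-e^{-B/2})^2$, and in the other case the positive term is absorbed at cost $B(1-e^{-B/2})^{-1}H^2$ using $\int(p-\sqrt{pq})=H^2$; only at the very end does $B\ge2$ enter, to compare the resulting constants with $2e^{B/2}$. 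This second-order bound plus sign/case analysis is the missing idea; without it your argument establishes the lemma only with the weaker constant $2e^{B/2}+1$.
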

For the proof see Appendix \ref{appendix.proofs}. The upper bound on the truncated Kullback-Leibler divergence is related to the inequalities that bound the Kullback-Leibler divergence by the squared Hellinger distance under the assumption of a bounded likelihood ratio, such as (7.6) in \cite{MR1653272} or Lemma 4 in \cite{MR1604481}.

Combining the previous lemma and Theorem \ref{S: Main Risk Bound} with $B=2$ gives
\begin{align}
    \E_{\mD_n}\Big[\int_{[0,1]^d} \sum_{j=1}^K \Big(\sqrt{p_j^0(\bx)}-\sqrt{\widehat p_j(\bx)}\Big)^2 \, d\mathbb{P}_{\mathbf{X}}(\bx)\Big]
    \leq 2C'\phi_nL\log^2(n),
    \label{eq.sq_Hell_bound}
\end{align}
whenever $\Delta_n(\widehat{\bp},\bp_0)\leq C'' B\phi_nL\log^2(n).$ 

We can also use the relation with the Hellinger distance to show that for $\alpha=1,$ we obtain a near minimax optimal convergence rate. Indeed $n^{-\frac{2\beta}{2\beta+d}}$ is the optimal rate for the squared Hellinger distance. For references see for instance Example 7.4.1 of \cite{vdGeer2000} for univariate densities bounded away from zero; the entropy bounds in Theorem 2.7.1. together with Proposition 1 of \cite{YangBarronMiniMax} for densities bounded away from zero; or the entropy bounds in Theorem 2.7.1. and Equation (3.4.5) of \cite{WeakConvergenceEmpProcesses} together with Chapter 2.3. of \cite{YangBarronMiniMax} for densities $p$ for which $\int\frac{1}{p}$ is bounded. Since the squared Hellinger distance can be upper bounded by the Kullback-Leibler divergence, the rate $n^{-\frac{2\beta}{2\beta+d}}$ is also a lower bound for the Kullback-Leibler risk. Since this rate is achieved for $\alpha=1$, it is clear that no further gain in the convergence rate can be expected for $\alpha>1.$ For $\alpha\geq 1,$ the rate of convergence is up to $\log(n)$-factors the same as in Theorem 5 of \cite{MR1332570} and also the conditions are comparable.

It is instructive to relate the global convergence rates to pointwise convergence. Recall that for real numbers $a,b,$ we have  $(\sqrt{a}-\sqrt{b})^2=(a-b)^2/(\sqrt{a}+\sqrt{b})^2.$ If $\mathbb{P}_{\mathbf{X}}$ has a Lebesgue density that is bounded on $[0,1]^d$ from below and above and if we choose $L$ of the order $O(\log n),$ \eqref{eq.sq_Hell_bound} indicates that on a large subset of $[0,1]^d,$ we can expect a pointwise distance
\begin{align*}
    \Big|p_j^0(\bx)-\widehat p_j(\bx)\Big|
    \lesssim \Big|\sqrt{p_j^0(\bx)}+\sqrt{\widehat p_j(\bx)}\Big| 
    K^{\frac{(1+\alpha/2)d}{(1+\alpha)\beta+d}} n^{-\frac{(1+\alpha)\beta}{2(1+\alpha)\beta+2d}}\log^{3/2}(n).
\end{align*}
The pointwise convergence rate gets therefore faster if the conditional class probabilities are small. In the most extreme case, $p_j^0(\bx)=0,$ the previous bound becomes
\begin{align*}
    \Big|p_j^0(\bx)-\widehat p_j(\bx)\Big|
    \lesssim
    K^{\frac{(2+\alpha)d}{(1+\alpha)\beta+d}} n^{-\frac{(1+\alpha)\beta}{(1+\alpha)\beta+d}}\log^{3}(n).
\end{align*}
Since $n^{-(1+\alpha)\beta/((1+\alpha)\beta+d)}\ll n^{-\beta/(2\beta+d)},$ this rate can be much faster than the classical nonparametric rate for pointwise estimation $n^{-\beta/(2\beta+d)}$. The gain gets accentuated as the index $\alpha$ increases. A large index $\alpha$ in the SVB bound can be chosen if the conditional class probabilities are rarely small or zero. Hence there is a trade-off and the regions on which a faster rate can be obtained are thus smaller.

\subsection{Oracle Inequality}
The risk bound of Theorem \ref{S: Main Risk Bound} relies on an oracle-type inequality. Before we can state this inequality we first need some definitions. Given a function class of conditional class probabilities $\mF,$ we denote by $\log(\mF)$ the function class containing all functions that can be obtained by applying the logarithm coefficient-wise to functions from $\mF$, that is,
\begin{equation*}
\log(\mF)=\big\{\mathbf{g}=\log(\bf): \bf \in \mF\big\}.
\end{equation*} 
Next we define a family of pseudometrics. Recall that a pseudometric is a metric without the condition that $d(f,g)=0$ implies $f=g$.
For a real number $\tau$ and $\bf,\bg:\mathcal{D}\rightarrow \mathbb{R}^K$, set
\begin{align*}
    d_{\tau}(\bf,\bg):=\sup_{\bx\in\mathcal{D}}\,\,\max_{k=1,\cdots,K}|(\tau\vee f_k(\bx))-(\tau\vee g_k(\bx))|.
\end{align*}
Lemma \ref{P: truncation log pseudometric} in the appendix verifies that this indeed defines a pseudometric. For $\tau=-\infty,$ $d_{\tau}(\bf,\bg)$ coincides with the $L^\infty$-norm as defined in the notation section. 

Denote by $\mathcal{N}(\delta,\mF,d(\cdot,\cdot))$ the $\delta$ interior covering number of a function class $\mF$ with respect to a (pseudo)metric $d(.,.)$. For interior coverings, the centers of the balls of any cover are required to be inside the function class $\mF$. Triangle inequality shows that any (exterior) $\delta$-cover can be used to construct an interior cover with the same number of balls, but with radius $2\delta$ instead of $\delta$.

\begin{Stelling}[Oracle Inequality]\label{S: Main Oracle Inequality}
Let $\mF$ be a class of conditional class probabilities and $\widehat{\bp}$ be any estimator taking values in $\mF$. If $B\geq 2$ and $\mathcal{N}_n=\mathcal{N}(\delta,\log(\mF),d_{\tau}(\cdot,\cdot))\geq 3$ for $\tau=\log(C_ne^{-B}/n)$, then
\begin{equation*}
\begin{aligned}
R_B(\bp_0,\widehat \bp)&\leq (1+\epsilon)\left(\inf_{\bp\in\mF}R(\bp_0,\bp)+\Delta_n(\bp_0,\widehat{\bp})+3\delta\right)\\
&+\frac{(1+\epsilon)^2}{\epsilon}\cdot\frac{68B\log(\mathcal{N}_n)+272B+(3/2)C_nK\big(\log\big(\frac{n}{C_n}\big)+B\big)}{n},
\end{aligned}
\end{equation*}
for all $\delta,\epsilon\in(0,1]$, $0<C_n\leq ne^{-1}$ and $\Delta_n(\bp_0,\widehat{\bp})$ as defined in \eqref{Eq: Difference Expected Empirical Risk Estimator vs Minimum}.
\end{Stelling}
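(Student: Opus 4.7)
The plan is a Bernstein-type fast-rate oracle argument that discretizes $\log(\mF)$ in the pseudometric $d_\tau$ and applies one-sided concentration on each net element. Choose an oracle $\bp^\star\in\mF$ nearly attaining $\inf_{\bp\in\mF}R(\bp_0,\bp)$; since $R_B\le R$, the bound $R_B(\bp_0,\bp^\star)\le\inf_{\bp\in\mF}R(\bp_0,\bp)$ is free. With $\hat R(\bp):=-n^{-1}\sum_i\bY_i^\top\log\bp(\bX_i)$, telescope
\[
R_B(\bp_0,\widehat\bp)\le R_B(\bp_0,\bp^\star)+\bigl[R_B(\bp_0,\widehat\bp)-R_B(\bp_0,\bp^\star)-(\hat R(\widehat\bp)-\hat R(\bp^\star))\bigr]+\bigl[\hat R(\widehat\bp)-\hat R(\bp^\star)\bigr].
\]
The expectation of the third bracket is at most $\Delta_n$ by the definition of $\Delta_n$, so the task reduces to uniformly controlling the middle bracket over $\bp\in\mF$ with the right mean-to-variance trade-off to produce the $(1+\epsilon)$ fast-rate structure.

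For the uniform control I would replace $\widehat\bp$ by its nearest element $\bp^\diamond$ in a $\delta$-net of $\log(\mF)$ under $d_\tau$, with $\tau=\log(C_ne^{-B}/n)$. Introduce the ``bad'' set $\mE_n:=\{\bx:\min_k\widehat p_k(\bx)\wedge p^\diamond_k(\bx)<C_ne^{-B}/n\}$. On $\mE_n^c$ the $d_\tau$-closeness of $\log\widehat\bp$ and $\log\bp^\diamond$ is a genuine $L^\infty$-closeness, so each $\widehat\bp\to\bp^\diamond$ substitution shifts the truncated-KL and cross-entropy integrands by at most $\delta$ per coordinate; summing these shifts over the telescoping produces the $3\delta$ term. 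On $\mE_n$ the logs can be arbitrarily negative, but the $B$-truncation saturates $\KL_B$, and $\mathbb{P}_{\bX}(\mE_n)$ is controlled via a union bound across coordinates and the choice of $\tau$. A Bennett-type tail bound on the random count $|\{i:\bX_i\in\mE_n\}|$ together with a deterministic $B$-bound on the loss inside $\mE_n$ then produces the $\tfrac32 C_nK(\log(n/C_n)+B)/n$ correction, and simultaneously rescues the empirical cross-entropy $\hat R(\widehat\bp)$ from being corrupted by samples on which $\log\widehat p_k$ is wildly negative.

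Having reduced to the finite family of $\mN_n$ centers, I would apply a one-sided Bernstein inequality to each and take a union bound. The critical variance bound is supplied by Lemma~\ref{lem.Hell_KL_bd}: for $B\ge 2$, the truncated log-likelihood-ratio restricted to $\mE_n^c$ is bounded by a constant multiple of $B$ and has variance at most a constant times $B\cdot H^2(\bp_0,\bp^{(j)})\le B\cdot R_B(\bp_0,\bp^{(j)})$. This Bernstein margin condition produces a deviation of the form $\epsilon\cdot R_B(\bp_0,\bp^{(j)})+\mathrm{const}\cdot B\log\mN_n/(\epsilon n)$; absorbing the $\epsilon$-multiple on the left gives the $(1+\epsilon)$ multiplier on the oracle quantities and the $(1+\epsilon)^2/\epsilon$ multiplier on the complexity, with the explicit constants $68$ and $272$ coming from standard Bernstein bookkeeping.

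The main obstacle is synchronizing the two truncations (loss at level $B$, metric at level $\tau$) through the empirical-process step. The pseudometric $d_\tau$ is deliberately blind to values of $\log\bp$ below $\tau$ because $\KL_B$ already saturates those contributions, but the empirical cross-entropy $\hat R$ is \emph{not} truncated and a single $\bX_i$ on which $\widehat p_k(\bX_i)$ is very small can make $\hat R(\widehat\bp)$ arbitrarily large. Containing the aggregate effect of such pathological samples without inflating the main $\log\mN_n/n$ rate, which requires the Bennett bound on $|\{i:\bX_i\in\mE_n\}|$ coupled with the deterministic $B$-bound in that region, is the delicate bookkeeping step that produces the explicit $C_n$-dependent term in the stated inequality.
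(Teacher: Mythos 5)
Your overall skeleton (reduce to a finite $d_\tau$-net, apply Bernstein with a variance proxy proportional to the risk itself, absorb via an inequality of the type $|a-b|\le 2\sqrt{a}c+d$ to get the $(1+\epsilon)$ and $(1+\epsilon)^2/\epsilon$ factors, and use the definition of $\Delta_n$ plus $R_B\le R$ for the oracle part, as in Proposition \ref{P: Upper Bound Risk Delta}) matches the paper. But two central steps are wrong as proposed. First, the $C_nK(\log(n/C_n)+B)/n$ term does not come from a ``bad set'' $\mE_n$ where the \emph{estimated} probabilities fall below $C_ne^{-B}/n$: there is no control whatsoever on $\mathbb{P}_{\bX}(\min_k\widehat p_k(\bX)<C_ne^{-B}/n)$ (the estimator may be tiny on a set of probability one), so neither your claim that this probability is ``controlled via the choice of $\tau$'' nor the Bennett bound on $|\{i:\bX_i\in\mE_n\}|$ can be made to work. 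In the paper the split is on the \emph{true} probabilities, $p_k^0(\bX)\lessgtr C_n/n$; smallness of $p_k^0$ is what makes the bad region harmless, because $\E[Y_k\mid\bX]=p_k^0(\bX)$ and $u\mapsto u|\log u|$ is increasing near zero (Lemma \ref{L: Bound for case of small p0}). Small values of $\widehat p_k$ require no probabilistic control at all: on $\{p_k^0\ge C_n/n\}$ one has the pointwise identity \eqref{Eq: Upper-lower bound property}, i.e.\ clipping $\widehat p_k$ from below at $C_ne^{-B}/n$ does not change $B\wedge\log(p_k^0/\widehat p_k)$, and this identity is the entire reason the $d_\tau$-covering (which is blind below $\tau$) suffices. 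Relatedly, your middle bracket contains the \emph{untruncated} empirical cross-entropy $\widehat R(\widehat\bp)$, whose replacement by a net center is not controlled by $d_\tau$; the paper avoids this by comparing $R_B$ with the truncated empirical counterpart $R_{B,n}$ and invoking $R_{B,n}\le R_{\infty,n}\le\inf_{\bp}R(\bp_0,\bp)+\Delta_n$, so that only truncated quantities ever meet the net.

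Second, your Bernstein input is not justified. The truncated, indicator-restricted log-ratio is \emph{not} bounded by a constant multiple of $B$ (on $\{p_k^0\ge C_n/n\}$ it can be as negative as $-\log(n/C_n)$ when $\widehat p_k$ is large and $p_k^0\approx C_n/n$), and Lemma \ref{lem.Hell_KL_bd} does not yield the required moment bounds; in the proof it is only used to guarantee $R_B\ge 0$ so that the weights in the maximal inequality are well defined. The actual variance/moment control is Lemma \ref{L: Inequality to relate to the risk}, which exploits the $p_k$-weighting to bound $\sum_k p_k|B\wedge\log(p_k/q_k)|^m$ by $\max\{m!,B^m/(B-1)\}$ times the truncated Kullback--Leibler divergence itself, giving the Bernstein parameters $v_i\asymp B\,R_B(\bp_0,\bp_\ell)$ and $U\asymp B$. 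Without an ingredient of this type (and without the $p_k^0$-split above), the concentration step and the explicit $C_n$-dependent remainder in the statement cannot be obtained along the route you describe.
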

The proof of this oracle inequality is a non-trivial variation of the proof for the oracle inequality in the regression model \cite{NonParametricRegressionReLU}. The statement seems to suggest to pick a small $C_n.$ Then, however, also $\tau$ will be small, and $d_\tau$ becomes a stronger metric possibly leading to an increase of the covering number $\mathcal{N}_n.$

We can also replace the covering number of $\log(\mF)$ by the covering number of $\mF$ in the oracle inequality:
\begin{Corollary}
Denote $\widetilde{\mathcal{N}}_n:=\mathcal{N}(\delta C_ne^{-B}/n,\mF,d_{\tau}(\cdot,\cdot))$, with $\tau=C_ne^{-B}/n$.
Under the conditions of Theorem \ref{S: Main Oracle Inequality}, it holds that
\begin{equation*}
\begin{aligned}
R_B(\bp_0,\widehat \bp)&\leq (1+\epsilon)\left(\inf_{\bp\in\mF}R(\bp_0,\bp)+\Delta_n(\bp_0,\widehat{\bp})+3\delta\right)\\
&+\frac{(1+\epsilon)^2}{\epsilon}\cdot\frac{68B\log(\widetilde{\mathcal{N}}_n)+272B+(3/2)C_nK(\log(n/C_n)+B)}{n},
\end{aligned}
\end{equation*}
for all $\delta,\epsilon\in(0,1]$, $0<C_n\leq ne^{-1}$ and $\Delta_n(\bp_0,\widehat{\bp})$ as defined in \eqref{Eq: Difference Expected Empirical Risk Estimator vs Minimum}.
\end{Corollary}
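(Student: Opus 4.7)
The plan is to reduce the statement to the Oracle Inequality (Theorem \ref{S: Main Oracle Inequality}) by showing that the covering number of $\log(\mF)$ with respect to $d_{\tau}$, $\tau=\log(C_ne^{-B}/n)$, is at most the covering number of $\mF$ at a scale shrunk by the factor $C_ne^{-B}/n$ with respect to $d_{\tau'}$, $\tau'=C_ne^{-B}/n$. Concretely, the target inequality is
\begin{equation*}
    \mathcal{N}\big(\delta,\log(\mF),d_{\tau}(\cdot,\cdot)\big)\ \leq\ \mathcal{N}\Big(\delta\,C_ne^{-B}/n,\,\mF,\,d_{\tau'}(\cdot,\cdot)\Big)=\widetilde{\mathcal{N}}_n.
\end{equation*}
Once this is established, plugging the resulting bound on $\log(\mathcal{N}_n)$ into Theorem \ref{S: Main Oracle Inequality} immediately yields the claim.

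The core computation is a Lipschitz estimate linking the two pseudometrics. For any $u>0$ and $\tau=\log(\tau')$ we have $\tau\vee\log(u)=\log(\tau'\vee u)$. Hence, for $\bf,\bg\in\mF$,
\begin{equation*}
    d_{\tau}\big(\log\bf,\log\bg\big)=\sup_{\bx}\max_{k}\big|\log(\tau'\vee f_k(\bx))-\log(\tau'\vee g_k(\bx))\big|.
\end{equation*}
Since $\log$ is $1/\tau'$-Lipschitz on $[\tau',\infty)$ and both arguments are $\geq\tau'$, this is bounded above by $\tau'^{-1}\,d_{\tau'}(\bf,\bg)=(ne^{B}/C_n)\,d_{\tau'}(\bf,\bg)$.

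It follows that if $\{\bp^{(1)},\dots,\bp^{(N)}\}\subset\mF$ is an interior $\delta C_ne^{-B}/n$-cover of $\mF$ in $d_{\tau'}$, then $\{\log\bp^{(1)},\dots,\log\bp^{(N)}\}\subset\log(\mF)$ is an interior $\delta$-cover of $\log(\mF)$ in $d_{\tau}$. Both the interior property and the radius scaling are preserved, and the number of balls does not increase. This yields $\mathcal{N}_n\leq\widetilde{\mathcal{N}}_n$.

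Feeding the bound $\log(\mathcal{N}_n)\leq\log(\widetilde{\mathcal{N}}_n)$ into Theorem \ref{S: Main Oracle Inequality} gives the corollary. The only subtlety to be careful about is the bookkeeping on the radius rescaling (the factor $C_ne^{-B}/n$ must land on the radius on the $\mF$-side) and the choice of truncation threshold on each side so that the identity $\tau\vee\log(u)=\log(e^\tau\vee u)$ can be used; no other step involves real difficulty.
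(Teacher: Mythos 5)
Your proposal is correct and follows essentially the same route as the paper: the paper establishes exactly the covering-number comparison $\mathcal{N}(\delta,\log(\mG),d_{\log(\tau)})\leq\mathcal{N}(\delta\tau,\mG,d_{\tau})$ as a standalone lemma in the appendix, via the same identity $\tau\vee\log(u)=\log(e^{\tau}\vee u)$ and the $1/\tau$-Lipschitz property of the logarithm on $[\tau,\infty)$ applied to the centers of an interior cover, and then plugs the result into Theorem \ref{S: Main Oracle Inequality}. Your bookkeeping of the radius rescaling by $C_ne^{-B}/n$ matches the paper's statement, so nothing is missing.
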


Let us briefly discuss some ideas underlying the proof of the oracle inequality. For simplicity, assume that $\widehat \bp$ is the MLE over a class $\mF$ and that $\bp_0\in \mF.$ By the definition of the MLE $\widehat \bp$, we have that $-\tfrac 1n\sum_{i=1}^n \bY_i^\top \log(\widehat \bp(\bX_i))\leq -\tfrac 1n \sum_{i=1}^n \bY_i^\top \log(\bp_0(\bX_i)).$ Taking expectation on both sides, one can then show that for any $B\geq 0,$
\begin{align*}
	\E_{\mD_n}\Big[ \frac 1n \sum_{i=1}^n \bp_0(\bX_i)^\top \Big(B \wedge  \log\Big(\frac{\bp_0(\bX_i)}{\widehat \bp(\bX_i)}\Big)\Big) \Big]
	\leq  \E_{\mD_n}\Big[ \frac 1n \sum_{i=1}^n \big(\bp_0(\bX_i)-\bY_i\big)^\top \Big( B \wedge \log\Big(\frac{\bp_0(\bX_i)}{\widehat \bp(\bX_i)}\Big)\Big)\Big].
\end{align*}
Using standard empirical process arguments, the right hand side can be roughly upper bounded by $\E_{\mD_n} [\max_j \frac 1n \sum_{i=1}^n (\bp_0(\bX_i)-\bY_i)^\top (B \wedge \log(\bp_0(\bX_i)/ \bp_j(\bX_i)))],$ where the maximum is over all centers of an $\eps$-covering of $\mF$ for a sufficiently small $\eps$. Since $\E_{\mD_n}[\bY_i | \bX_i]=\bp_0(\bX_i),$ this is the maximum over a centered process. Using empirical process theory a second time, the left hand side of the previous display can be shown to converge to the statistical risk $R_B(\bp_0, \widehat \bp)=\E_{\mD_n,\bX} [\KL_B( \bp_0(\bX), \widehat \bp(\bX))].$

To apply Bernstein's inequality we need to bound the moments of the random variables in the empirical process. For that we have derived the following inequality that relates the $m$-th moment to the truncated Kullback-Leibler divergence and also shows the effect of the truncation level $B.$

\begin{Lemma}\label{L: Inequality to relate to the risk}
If $B>1$ and $m=2,3,\dots$, then, for any two probability vectors $(p_1,\dots,p_K)$ and $(q_1,\dots,q_K),$ we have
\begin{equation*}
\sum_{k=1}^Kp_k\left|B\wedge\log\left(\frac{p_k}{q_k}\right)\right|^m 
\leq 
\max\left\{m!,\frac{B^m}{B-1}\right\} \sum_{k=1}^Kp_k\left(B\wedge\log\left(\frac{p_k}{q_k}\right)\right).
\end{equation*}
\end{Lemma}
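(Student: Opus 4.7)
My approach is to reduce the claim to a pointwise inequality in $k$ by introducing the slack $\psi_k := p_k(B \wedge \log(p_k/q_k)) - (p_k - q_k)$. Because $(p_1,\dots,p_K)$ and $(q_1,\dots,q_K)$ are probability vectors, $\sum_k(p_k - q_k) = 0$, so $\sum_k \psi_k = \sum_k p_k(B \wedge \log(p_k/q_k))$, which is exactly the factor on the right-hand side. It therefore suffices to prove the pointwise bound $p_k|B \wedge \log(p_k/q_k)|^m \leq \max\{m!, B^m/(B-1)\}\,\psi_k$ (which in particular forces $\psi_k \geq 0$) for each $k$ and then sum.

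To establish the pointwise inequality, I fix $k$, drop subscripts, and split on the sign and size of $a := \log(p/q)$, with $r := B \wedge a$. If $a \leq 0$, then $r = a$ and substituting $t := -a \geq 0$ and $p = q e^{-t}$ reduces the ratio $p|r|^m/\psi$ to $t^m/(e^t - 1 - t)$; the Taylor expansion $e^t - 1 - t = \sum_{j \geq 2} t^j/j! \geq t^m/m!$ (valid for $t \geq 0$ and $m \geq 2$) bounds this ratio by $m!$. If $a > B$, then $r = B$ and $\psi = p(B-1) + q \geq p(B-1)$, so the ratio is at most $pB^m/(p(B-1)) = B^m/(B-1)$. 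Degenerate cases with $p = 0$ or $q = 0$ collapse to one of these situations via the paper's $0\log^{\gamma} 0 := 0$ convention.

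The main obstacle is the middle case $0 < a \leq B$, where the ratio takes the form $C(a) := e^a a^m/((a-1)e^a + 1)$ and I need $C(a) \leq B^m/(B-1)$. I plan to prove this by showing $C$ is nondecreasing on $(0,\infty)$; then $C(a) \leq C(B) = e^B B^m/((B-1)e^B + 1) \leq B^m/(B-1)$ follows immediately. Logarithmic differentiation yields $\mathrm{sign}\,C'(a) = \mathrm{sign}\,h(a)$, where $h(a) := e^a((m-1)a - m) + (a + m)$. The nonnegativity $h \geq 0$ on $[0,\infty)$ follows from $h(0) = 0$, $h'(0) = 0$, and $h''(a) = e^a((m-1)a + m - 2) \geq 0$ for $a \geq 0$ and $m \geq 2$: integrating twice gives $h'(a) \geq 0$ and then $h(a) \geq 0$. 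Combining the three cases yields the pointwise bound with constant $\max\{m!, B^m/(B-1)\}$, and summing over $k$ together with the identity $\sum_k \psi_k = \sum_k p_k r_k$ delivers the lemma.
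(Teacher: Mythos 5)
Your argument is correct and follows essentially the same route as the paper's proof: the same reduction, via $\sum_k(p_k-q_k)=0$, to the termwise inequality $p_k\big|B\wedge\log(p_k/q_k)\big|^m\leq \max\{m!,B^m/(B-1)\}\big(p_k(B\wedge\log(p_k/q_k))-p_k+q_k\big)$, and the same three regimes (log-ratio nonpositive, in $(0,B]$, above $B$), with the Taylor-series bound giving $m!$ in the first regime and the trivial bound giving $B^m/(B-1)$ in the last. The only difference is in the middle regime: your $C(a)$ is exactly the paper's $F_m(e^{-a})$, so your monotonicity claim is the paper's statement that $F_m$ is decreasing on $(0,1)$, which you verify by direct differentiation and the second-derivative argument for $h(a)=e^a((m-1)a-m)+a+m$ instead of the L'Hopital-type monotonicity rule the paper invokes; this is a correct, self-contained substitute.
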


In order to use the oracle inequality for deep ReLU networks with softmax activation in the output layer, we now state a bound on the covering number of these classes.
The bound and its proof are a slight modification of Lemma 5 in \cite{NonParametricRegressionReLU}.

\begin{Lemma}\label{L: Covering Number bound}
If $V:=\prod_{\ell=0}^{L+1}(m_{\ell}+1)$, then for every $\delta>0$,
\begin{equation*}
\mathcal{N}\left(\delta,\log(\mF_{\bPhi}(L,\bfm,s)),\|\cdot\|_{\infty}\right)\leq \left(4\delta^{-1}K(L+1)V^2\right)^{s+1},
\end{equation*}
and
\begin{equation*}
    \log\mathcal{N}\left(\delta,\log(\mF_{\bPhi}(L,\bfm,s)),\|\cdot\|_{\infty}\right)\leq (s+1)\log(2^{2L+6}\delta^{-1}(L+1)K^3d^2s^L).
\end{equation*}
\end{Lemma}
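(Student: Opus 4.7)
The plan is to compose two Lipschitz bounds, reducing the problem to the standard parameter-discretization covering for sparse ReLU networks from Lemma 5 of \cite{NonParametricRegressionReLU}. Writing $M(\bz):=\log\sum_{j=1}^K e^{z_j}$ for the log-sum-exp, the identity $[\log\bPhi(\bz)]_k=z_k-M(\bz)$ together with the fact that $\nabla M$ is a probability vector (so $M$ is $1$-Lipschitz in $\|\cdot\|_\infty$) implies that $\log\bPhi$ is $2$-Lipschitz in $\|\cdot\|_\infty$. Consequently, if $\bf=\bPhi(\bg)$ and $\bf^*=\bPhi(\bg^*)$ in $\mF_{\bPhi}(L,\bfm,s)$ have pre-softmax outputs $\bg,\bg^*\in\mF_{\id}(L,\bfm,s)$ with $\|\bg-\bg^*\|_\infty\leq \delta/2$, then $\|\log\bf-\log\bf^*\|_\infty\leq\delta$. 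Hence any interior $\delta/2$-cover of $\mF_{\id}(L,\bfm,s)$ in $\|\cdot\|_\infty$ induces, by applying $\log\bPhi$ coordinate-wise to each center, an interior $\delta$-cover of $\log(\mF_{\bPhi}(L,\bfm,s))$ of the same cardinality.

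Next I apply the standard discretization argument to $\mF_{\id}(L,\bfm,s)$ on the input domain $[0,1]^d$. Because all weights and shifts lie in $[-1,1]$, a straightforward induction over the $L+1$ layers shows that (i) every coordinate of the network output is bounded by $V$ in absolute value, and (ii) two networks whose parameters differ by at most $\eta$ coordinate-wise have outputs differing by at most $(L+1)V\eta$ in $\|\cdot\|_\infty$. The total number of parameter positions is bounded by $V$, and sparsity restricts at most $s$ of them to be nonzero. Discretizing $[-1,1]$ with a grid of spacing $\eta=\delta/(4K(L+1)V)$, selecting the $s$ nonzero positions in $\binom{V}{s}\leq V^s$ ways, and assigning grid values to them in at most $(4K(L+1)V/\delta+1)^s$ ways, produces at most $(4\delta^{-1}K(L+1)V^2)^{s+1}$ distinct centers. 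The factor $K$ over Schmidt-Hieber's scalar version absorbs the doubling from the log-softmax reduction and provides slack for the vector-valued output of width $m_{L+1}=K$; this proves the first inequality.

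For the second inequality I replace $V$ by an expression in $s,L,d,K$. After pruning neurons with only zero incoming or outgoing connections, one may assume each hidden width satisfies $m_\ell\leq s$, while $m_0=d$ and $m_{L+1}=K$ are fixed by the problem. This gives $V=\prod_{\ell=0}^{L+1}(m_\ell+1)\leq (d+1)(K+1)(s+1)^L\leq 2^{L+2}dK\,s^L$, hence $V^2\leq 2^{2L+4}d^2K^2 s^{2L}$ (which can be further tightened via the per-layer budget $\sum_{\ell=0}^L m_\ell m_{\ell+1}\leq s$ and an AM--GM argument on $\prod_\ell m_\ell m_{\ell+1}$ to absorb one power of $s^L$). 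Substituting into the first bound and collecting the constants gives $\log\mathcal{N}\leq(s+1)\log\bigl(2^{2L+6}\delta^{-1}(L+1)K^3 d^2 s^L\bigr)$.

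The main obstacle is the bookkeeping: carefully tracking the numerical factors through the two Lipschitz reductions, the combinatorial counting on the grid, and especially the pruning/AM--GM step that turns $V^2$ into the correctly-exponented polynomial $2^{O(L)}K^3d^2s^L$. The structural content is a direct adaptation of the scalar case once the log-softmax reduction is in place.
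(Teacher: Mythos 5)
Your treatment of the first inequality is essentially the paper's own argument: reduce to the identity-output class $\mF_{\id}(L,\bfm,s)$ via a Lipschitz bound on the log-softmax, then use the parameter-discretization count behind Lemma 5 of \cite{NonParametricRegressionReLU}. The paper proves Lipschitz constant $K$ (bounding each partial derivative of $\log\bPhi$ by one) and applies the cited covering lemma at radius $\delta/(2K)$, converting to an interior cover by the triangle inequality; your constant $2$ via the log-sum-exp decomposition is correct and even sharper, and your grid centers are automatically interior. This part is sound in structure, though the bookkeeping you defer (e.g.\ passing from $V^s\bigl(4K(L+1)V\delta^{-1}+1\bigr)^s$ to $\bigl(4\delta^{-1}K(L+1)V^2\bigr)^{s+1}$) is exactly what the citation of Lemma 5 supplies in the paper and would need to be carried out if you re-derive it inline.

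The genuine gap is in the second inequality. Your claimed per-layer budget $\sum_{\ell=0}^{L}m_\ell m_{\ell+1}\le s$ is false: the sparsity constraint bounds $\sum_\ell\|W_\ell\|_0$, and after removing inactive nodes one only gets $m_{\ell+1}\le\|W_\ell\|_0$ for hidden layers, not $m_\ell m_{\ell+1}\le\|W_\ell\|_0$; a sparse (non-fully-connected) network can have $m_\ell m_{\ell+1}$ of order $s^2$ while $\|W_\ell\|_0\le s$. Even the valid consequence of pruning, $\sum_{\ell=1}^{L}m_\ell\le s$ and hence $\prod_{\ell=1}^{L}(m_\ell+1)\le(1+s/L)^{L}$ by AM--GM, cannot trade $s^{2L}$ for $s^{L}$: one has $(s/L)^{2L}\le s^{L}$ only when $s\le L^{2}$, which fails in the regime where the lemma is applied ($s\asymp n\phi_n\log n$ with $L\gtrsim\log_2 n$). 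So the step meant to ``absorb one power of $s^L$'' would not go through. The honest substitution of $V\le 2^{L+2}dKs^{L}$ into the first bound gives $(s+1)\log\bigl(2^{2L+6}\delta^{-1}(L+1)K^3d^2s^{2L}\bigr)$, i.e.\ with $s^{2L}$ in place of $s^{L}$ -- which is also all that the paper's own proof (``substituting this in the first bound and taking the logarithm'') actually delivers; the $s^{L}$ in the printed display appears to be a typo, and the discrepancy is immaterial downstream since it only changes a logarithmic factor. You should either prove the bound with $s^{2L}$ or justify the tightening by a correct argument, which the proposed one is not.
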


The second bound follows from the first by removing inactive nodes, Proposition \ref{P: Inactive Node Removal}, and taking the logarithm. The full proof can be found in  Appendix \ref{appendix.proofs}.

The proof of the main risk bound in Theorem \ref{S: Main Risk Bound} is based on the oracle inequality derived above. To bound the individual error terms, we apply the approximation theory developed in Theorem \ref{S:ApproxUnderCondA} and Lemma \ref{L:ApproximationSoftmaxNetwork} as well as the previous bound on the metric entropy. This shows that for any $M>1,$ the truncated Kullback-Leibler risk for a network class with depth $L,$ width $\gtrsim KM^{d/\beta}$ and sparsity $s\lesssim KM^{d/\beta}$ can be bounded by
\begin{align*}
    R_B(\bp_0,\widehat \bp)
    &\lesssim 
    K^{3+\alpha}\frac{\log(M)}{M^{1+\alpha}}+KM^{d/\beta}L \frac{\log^2(n)}n+\Delta_n(\widehat{\bp},\bp_0).
\end{align*}
Balancing the terms $K^{3+\alpha}/M^{1+\alpha}$ and $KM^{d/\beta}$ leads to $M \asymp K^{\frac{(2+\alpha)\beta}{(1+\alpha)\beta+d}}n^{\frac{\beta}{(1+\alpha)\beta+d}}$ and for small $\Delta_n(\widehat{\bp},\bp_0),$ we get the rate $R_B(\bp_0,\widehat \bp)\lesssim K^{\frac{(1+\alpha)\beta+(3+\alpha)d}{(1+\alpha)\beta+d}} n^{-\frac{(1+\alpha)\beta}{(1+\alpha)\beta+d}} L\log^2(n)$ in Theorem \ref{S: Main Risk Bound}.

\section{Proofs}

\begin{proof}[Proof of Lemma \ref{lem.infinite_risk}]
Consider the event $\mA_n:=\{(\bX_i, \bY_i)\in ([0,1/3]^d\times (1,0)^\top) \cup ([2/3,1]^d \times (0,1)^\top), \ \text{for all}  \ i=1,\ldots, n \}.$ Recall that $0\log(0)=0.$ On the event $\mA_n$, for any $\bp(\bx)=(p_1(\bx),p_2(\bx))^\top$ such that $p_1(\bx)=0$ for all $\bx\in [0,1/3]^d$ and $p_1(\bx)=1$ for all $\bx\in [2/3,1]^d,$ we have that $\ell(\bp,\mD_n)=0.$ Since the CE loss is nonnegative, any such $\bp$ in the class $\mF$ is a MLE on this event. Since $\mathbb{P}(\mA_n)>0$, it follows that
\begin{align*}
    \mathbb{E}_{\mD_n,\bX}\left[\bp_0(\bX)^\top \log\Big(\frac{\bp_0(\bX)}{\widehat{\bp}(\bX)}\Big)\right]
    &\geq \mathbb{E}_{\mD_n}\left[\mathbf{1}(\mA_n) \int_{[0,1]^d} 
    \bp_0(\bu)^\top \log\Big(\frac{\bp_0(\bu)}{\widehat{\bp}(\bu)}\Big) \, d\bu 
     \right] \\
     &= \infty \cdot \mathbb{P}(\mA_n) \\
     &= \infty.
\end{align*}
\end{proof}

\subsection{Approximation related results}
This section is devoted to the proof of Theorem \ref{S: Main Risk Bound}. First we construct a neural network that approximates $\bp_0$ in terms of the $L_{\infty}$-norm and is bounded away from zero. Afterwards we prove Theorem \ref{S:ApproxUnderCondA} relating the previously derived approximation theory to a bound on the approximation error in terms of the expected Kullback-Leibler divergence. We finish the proof combining this network with the new oracle inequality (Theorem \ref{S: Main Oracle Inequality}) and an entropy bound for classes of neural networks with a softmax function in the output layer, Lemma \ref{L: Covering Number bound}. Recall that $\mF_{\id}(L,\bfm,s)$ denotes the neural network class with $L$ hidden layers, width vector $\bfm$, network sparsity $s$ and identity activation function in the output layer.

\begin{Stelling}\label{T:MSTLogNetworkResult}
For all $M\geq 2$ and $\beta>0$ there exists a neural network $G\in\mF_{\id}(L,\bfm,s)$, with depth $L=\floor{40(\beta+2)^2\log_2(M)},$ width $\bfm=(1,\allowbreak \floor{48\ceil{\beta}^32^{\beta}M^{1/\beta}},\cdots,\floor{48\ceil{\beta}^32^{\beta}M^{1/\beta}},1)$ and sparsity $s\leq 4284(\beta+2)^52^{\beta}M^{1/\beta}\log_2(M),$ such that for all $x\in [0,1],$
\begin{equation*}
\big|e^{G(x)}-x\big|\leq \frac{4}{M} \ \ \text{and} \ \ 
    G(x)\geq \log\Big(\frac 4M\Big).
\end{equation*}
\end{Stelling}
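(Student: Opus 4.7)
The plan is to build $G$ as (a clipped version of) an approximation of $h(x):=\log(x+c)$ with $c=4/M$, exploiting the identity $e^{h(x)}=x+c$. If $G$ equalled $h$ exactly one would get $|e^{G(x)}-x|=c=4/M$ and $G(x)\ge\log c=\log(4/M)$, which are the two target inequalities with no slack. The real construction incurs approximation error and the proof must absorb this slack into the $4/M$ budget.

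Concretely, I would set $G(x):=\max(G^{\mathrm{app}}(x),\log(4/M))$ where $G^{\mathrm{app}}$ is a ReLU approximation of $h$ with $\|G^{\mathrm{app}}-h\|_\infty\le\eta$, with $\eta$ a suitably small multiple of $1/M$. The clamp is realised by $\log(4/M)+\sigma\bigl(G^{\mathrm{app}}(x)-\log(4/M)\bigr)$, costing one extra layer, constant width and a handful of weights, and it guarantees $G(x)\ge\log(4/M)$ automatically. For the exponentiated bound, split on whether the clamp is active. In the unclamped region $G(x)=G^{\mathrm{app}}(x)$ and $|e^a-e^b|\le e^{\max(a,b)}|a-b|$ together with $e^{h(x)}\le 1+c$ yield $|e^{G(x)}-x|\le (e^{\eta}-1)(x+c)+c\le 4/M$ provided $\eta$ is a sufficiently small multiple of $1/M$. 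In the clamped region $G(x)=\log(4/M)$, and $G^{\mathrm{app}}(x)<\log(4/M)\le h(x)+\eta$ forces $x\le c(e^{\eta}-1)=O(1/M^2)$, so $|e^{G(x)}-x|=|4/M-x|\le 4/M$.

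The core of the proof is thus the construction of $G^{\mathrm{app}}$. A direct Schmidt-Hieber H\"older approximation of $h$ on $[0,1]$ is inefficient: $\|h^{(k)}\|_\infty\sim (M/c)^k$, so the $\beta$-H\"older norm of $h$ is polynomial in $M$ and the induced rate is too slow. I would instead exploit the scaling identity $\log(2^{-j}y)=-j\log 2+\log y$ to reduce the problem to approximating $\log\vert_{[1/2,1]}$, which is genuinely $\beta$-H\"older with norm independent of $M$. Cover $[c,1+c]$ by the $O(\log M)$ dyadic subintervals $[2^{-j-1},2^{-j}]$ for $j=0,\dots,\lceil\log_2(M/c)\rceil$, rescale each to $[1/2,1]$, and apply a single fixed sub-network $N_{\log}$ approximating $\log\vert_{[1/2,1]}$ to accuracy $1/M$. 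By the Yarotsky/Schmidt-Hieber construction $N_{\log}$ can be taken of depth $\Theta((\beta+2)^2\log M)$, width $\Theta(\lceil\beta\rceil^3 2^\beta M^{1/\beta})$ and sparsity $\Theta((\beta+2)^5 2^\beta M^{1/\beta}\log M)$, matching the explicit constants in the theorem. A ReLU partition-of-unity selector identifying the correct dyadic branch and adding the affine offset $-j\log 2$ contributes only lower-order depth, width and sparsity, absorbable into the dominant $N_{\log}$ stage.

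The trickiest step is this dyadic selector. The exact integer-valued $j$-identification is a step function, hence not ReLU-representable, and must be smoothed over an $O(1/M)$-wide transition region near each breakpoint $x+c=2^{-j}$. Because $\log$ has unbounded derivative at $0$, the smoothed rescaled input $2^j(x+c)$ must stay inside $[1/2,1]$ with margin, or else the $O(1/M)$ approximation error of $N_{\log}$ is transported by $\log$ into something uncontrolled. Calibrating the smoothing width, combining the contributions from the two branches that fire simultaneously near a breakpoint, and reconciling the resulting error propagation with the explicit prefactors $40(\beta+2)^2$, $48\lceil\beta\rceil^3 2^\beta$ and $4284(\beta+2)^5 2^\beta$ is the bookkeeping-heavy heart of the proof.
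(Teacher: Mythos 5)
Your overall route is genuinely different from the paper's: you approximate the \emph{shifted} logarithm $h(x)=\log(x+c)$ uniformly and then clamp, whereas the paper approximates $\log x$ itself by local Taylor polynomials of degree $\floor{\beta}$ on a polynomially spaced grid $(a_r,b_r)$ and never needs uniform accuracy --- it only controls the exponentially weighted error $e^{\omega}\,|T_b^{\floor{\beta}}(x)-\log(x)|\le 1/M$ (Proposition \ref{P: Bound of Exp times Logerror}), which is exactly what survives exponentiation; your final clipping layer $\max\{\cdot,\log(4/M)\}$ and its case analysis do coincide with the paper's last step. However, as written there are two genuine gaps. The first is arithmetic but fatal to the stated estimate: with $c=4/M$ the triangle inequality gives $|e^{G(x)}-x|\le |e^{G(x)}-e^{h(x)}|+c$, so the budget $4/M$ is already fully consumed by the shift, and your displayed bound $(e^{\eta}-1)(x+c)+c\le 4/M$ is false for every $\eta>0$. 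This is fixable by taking $c$ strictly smaller (say $c=2/M$, which still makes the clamped-region argument work since there $0\le x\le 8/M$), but no ``sufficiently small multiple of $1/M$'' rescues the version you wrote.

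The second gap is structural: the dyadic selector feeding ``a single fixed sub-network $N_{\log}$'' cannot work. Near a breakpoint $y=2^{-j}$ any ReLU (hence continuous) selector must pass the shared subnetwork an interpolated input; a linear blend $u_\lambda=(1-\lambda)2^{j}y+\lambda 2^{j-1}y$ with the correspondingly blended offset $-(j-\lambda)\log 2$ returns $\log y+\log(2-\lambda)-(1-\lambda)\log 2$, an error of constant order (about $0.06$ at $\lambda=\tfrac12$) that does not shrink with the transition width, and at $y$ of constant size this becomes a constant-order error in $e^{G(x)}$, not $O(1/M)$; making the composition exact would instead require computing $y\,e^{-o(y)}$, a genuine data-dependent multiplication. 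The cure is to blend branch \emph{outputs}, i.e.\ evaluate a log-approximant at both competing rescalings near each breakpoint and combine with partition-of-unity weights via approximate multiplication --- which forces at least two parallel copies of $N_{\log}$ (e.g.\ an even/odd-level split) plus $\mathrm{Mult}$-type machinery, and is precisely the structure the paper builds with its $F_r,H_r$ partition, the networks $G_{F_r},G_{H_r}$ and Lemma \ref{L: Combined Multiplication}. Your remark about ``combining the contributions from the two branches'' points in the right direction but contradicts the single-subnetwork architecture you specify, and the missing mechanism is the heart of the construction rather than bookkeeping; the size accounting ($O(\log M)$ branches of constant selector width, two expensive copies of width $\Theta(M^{1/\beta})$) would then have to be redone to fit under the stated $L$, $\bfm$, $s$.
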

The proof of this theorem can be found in Appendix \ref{A:Proof of LogNetwork}. To approximate H\"{o}lder functions we use Theorem 5 from \cite{NonParametricRegressionReLU} with $m$ equal to $\ceil{\log_2(M))(d/\beta+1)}$. We state here a variation of that theorem in our notation using weaker upper bounds to simplify the expressions for the network size. These upper bounds can be deduced directly from the depth-synchronization and network enlarging properties of neural networks stated in Section \ref{sec.embed_props}. Set
$$C_{Q,\beta,d}:=(2Q+1)(1+d^2+\beta^2)6^d+Q3^{\beta}.$$

\begin{Stelling}\label{T:ReluNWApproxResult}
For every function $\bf\in \mathcal{G}(\beta,Q)$ and every $M>(\beta+1)^{\beta}\vee(Q+1)^{\beta/d}e^{\beta},$ there exist neural networks $H_k\in\mF_{\id}(L,\bfm,s)$ with
 $L=3\ceil{\log_2(M)(d/\beta+1)}(1+\ceil{\log_2(d\vee\beta)}),$  $\bfm=(d,6(d+\ceil{\beta})\floor{M^{d/\beta}},\cdots,6(d+\ceil{\beta})\floor{M^{d/\beta}},1)$, and $s\leq 423(d+\beta+1)^{3+d}M^{d/\beta}\log_2(M))(d/\beta+1),$ such that 
\begin{equation*}
\big\|H_k-f_k^0\big\|_{\infty}\leq \frac{C_{Q,\beta,d}}{M}, \ \ \forall k\in\{1,\cdots,K\}.
\end{equation*} 
\end{Stelling}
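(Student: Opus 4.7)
The plan is to deduce Theorem \ref{T:ReluNWApproxResult} directly from Theorem 5 of \cite{NonParametricRegressionReLU}, applied separately to each coordinate function. Since $\bf \in \mG(\beta, Q)$ means that every component $f_k^0$ lies in $C^\beta([0,1]^d, Q)$, that theorem yields, for any integers $m, N \geq 1$, a ReLU network whose depth grows linearly in $m (1+\ceil{\log_2(d \vee \beta)})$, whose hidden widths are bounded by $6(d+\ceil{\beta}) N$, and whose sup-norm error to $f_k^0$ splits into a depth-controlled contribution of order $(2Q+1)(1+d^2+\beta^2) 6^d \cdot 2^{-m}$ and a width-controlled contribution of order $Q 3^\beta N^{-\beta/d}$. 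The single constant $C_{Q,\beta,d}$ was defined precisely so that the sum of these two contributions fits into $C_{Q,\beta,d}/M$ once the parameters are picked correctly.

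Concretely, the plan is to take $m = \ceil{\log_2(M)(d/\beta+1)}$ and $N = \floor{M^{d/\beta}}$. This choice of $m$ makes $2^{-m} \leq 1/M$, so the first error term is at most $(2Q+1)(1+d^2+\beta^2) 6^d / M$, and this choice of $N$ makes the second error term at most $Q 3^\beta / M$; summing gives exactly the claimed bound. The assumption $M > (\beta+1)^\beta \vee (Q+1)^{\beta/d} e^\beta$ is what guarantees that $m$ and $N$ satisfy the size conditions needed to invoke Theorem 5 of \cite{NonParametricRegressionReLU} and that lower-order terms in its formulation can be absorbed into $C_{Q,\beta,d}$.

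Next I would translate the architecture produced by \cite{NonParametricRegressionReLU} into the clean form stated here. Substituting the chosen $m$ into the depth bound yields a depth of at most $3\ceil{\log_2(M)(d/\beta+1)}(1+\ceil{\log_2(d\vee\beta)})$; the hidden width is $6(d+\ceil{\beta})\floor{M^{d/\beta}}$; and the sparsity bound from the cited theorem simplifies, after absorbing polynomial factors in $d$ and $\beta$ into $(d+\beta+1)^{3+d}$ and using $\log_2(M) \geq 1$, into the stated bound $s \leq 423(d+\beta+1)^{3+d} M^{d/\beta} \log_2(M)(d/\beta+1)$. Because different components $f_k^0$ might originally yield networks of slightly different sizes, I would then invoke the depth-synchronization and network-enlarging constructions recalled in Section \ref{sec.embed_props} to pad each of the $K$ component networks to exactly the common architecture $(L, \bfm, s)$ without altering its input-output map.

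The main obstacle is entirely the bookkeeping of constants: checking that the inflated numerical constants ($3$, $6$, $423$) and the polynomial-in-$(d,\beta)$ factors $(d+\beta+1)^{3+d}$ really dominate the raw output of Theorem 5 of \cite{NonParametricRegressionReLU} after substitution, and that the padding step preserves the stated sparsity. No new conceptual work is required beyond the cited approximation theorem and the elementary enlargement operations; the point is only that this restatement packages the bounds in a form directly usable by the oracle inequality.
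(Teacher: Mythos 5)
Your proposal follows essentially the same route as the paper, which obtains this statement precisely as a restatement of Theorem 5 of \cite{NonParametricRegressionReLU} with $m=\ceil{\log_2(M)(d/\beta+1)}$ and $N\asymp M^{d/\beta}$, with the architecture bounds simplified through the depth-synchronization and enlarging properties of Section \ref{sec.embed_props}. One minor slip in your paraphrase: the first error term of that theorem is $(2Q+1)(1+d^2+\beta^2)6^d\,N\,2^{-m}$, not $(2Q+1)(1+d^2+\beta^2)6^d\,2^{-m}$, but your choice of $m$ (with the extra $d/\beta$ in the exponent) is exactly what compensates the factor $N$, since $N2^{-m}\leq M^{d/\beta}M^{-(d/\beta+1)}=1/M$, so the claimed bound $C_{Q,\beta,d}/M$ still follows.
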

Here the $M$ is chosen such that $M^{d/\beta}\asymp N,$ where $N$ is as defined in Theorem 5 of \cite{NonParametricRegressionReLU}.

Without loss of generality we can assume that the output of the $H_k$ networks lies in $[0,1]$. Indeed if this would not be the case, then the projection-layer that we use later on in our proof will guarantee that it is in this interval. This will not increase the error since the functions $f_k^0$ only take values in $[0,1]$.

To obtain a neural network with softmax output, the next lemma combines the neural network constructions from the previous two theorems and replaces the output with a softmax function.

\begin{Lemma}\label{L:ApproximationSoftmaxNetwork}
For every function $\bf\in \mathcal{G}(\beta,Q)$ and every $M> K(4+C_{Q,\beta,d})\vee  (\beta+1)^{\beta}\vee(Q+1)^{\beta/d}e^{\beta}$, there exists a neural network $\widetilde{\bq}\in \mF_{\bPhi}(L,\bfm,s)$, with $L=3\ceil{\log_2(M)(d/\beta+1)}(1+\ceil{\log_2(d+\beta)})+\floor{40(\beta+2)^2\log_2(M)}+2,$ $$\bfm=\Big(d,\floor{48K(d+\ceil{\beta}^3)2^{\beta}M^{d/\beta}},\cdots, \floor{48K(d+\ceil{\beta}^3)2^{\beta}M^{d/\beta}},K\Big),$$ and $s\leq 4707K(d+\beta+1)^{4+d}2^{\beta}M^{d/\beta}\log_2(M))(d/\beta+1),$
 such that,
 \begin{equation*}
     \|\widetilde{\mathbf{q}}_k-\bp_0\|_{\infty}\leq \frac{2K(4+C_{Q,\beta,d})}{M},
 \end{equation*}
 and
 \begin{equation*}
     \widetilde{q}_k(\bx)\geq \frac{1}{M},\ \ \forall \, k\in\{1,\cdots,K\},\, \forall \, \bx\in[0,1]^d.
 \end{equation*}
\end{Lemma}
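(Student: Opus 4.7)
The plan is to compose, for each class $k$, the H\"older approximant $H_k$ from Theorem \ref{T:ReluNWApproxResult} with the log-approximant $G$ from Theorem \ref{T:MSTLogNetworkResult}, stack the resulting $K$ scalar branches in parallel, and place the softmax $\bPhi$ at the output. Concretely, set $g_k:=G\circ H_k$ and $\widetilde{\bq}(\bx):=\bPhi\bigl(g_1(\bx),\dots,g_K(\bx)\bigr)$. A one-layer projection subnetwork $y\mapsto\sigma(y)-\sigma(y-1)$ is inserted between $H_k$ and $G$ to force $H_k$ into $[0,1]$, the domain on which Theorem \ref{T:MSTLogNetworkResult} is stated; because $p_k^0\in[0,1]$, this does not worsen the sup-norm error.

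For the approximation bound, let $a_k:=e^{g_k}=e^{G(H_k)}$. Theorem \ref{T:MSTLogNetworkResult} gives $a_k\ge 4/M$ and $|a_k-H_k|\le 4/M$; combined with the $H_k$ estimate, $|a_k-p_k^0|\le\epsilon$ with $\epsilon:=(4+C_{Q,\beta,d})/M$. Using $\sum_j p_j^0=1$ I would exploit the algebraic identity
\begin{align*}
a_k-p_k^0\sum_j a_j\;=\;\sum_{j\neq k}\bigl((a_k-p_k^0)p_j^0-p_k^0(a_j-p_j^0)\bigr),
\end{align*}
bound each summand by $\epsilon(p_j^0+p_k^0)$, and sum to obtain a numerator estimate $\epsilon(1+(K-2)p_k^0)\le(K-1)\epsilon$. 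Taking the hypothesis on $M$ in the quantitative form $K\epsilon\le 1/2$ (a mild strengthening of the stated $M>K(4+C_{Q,\beta,d})$), the denominator satisfies $\sum_j a_j\ge 1-K\epsilon\ge 1/2$, and
\begin{align*}
|\widetilde q_k-p_k^0|\;=\;\frac{|a_k-p_k^0\sum_j a_j|}{\sum_j a_j}\;\le\;2(K-1)\epsilon\;\le\;\frac{2K(4+C_{Q,\beta,d})}{M}.
\end{align*}
The uniform lower bound follows immediately from $a_k\ge 4/M$ and $\sum_j a_j\le 1+K\epsilon\le 2$, giving $\widetilde q_k\ge (4/M)/2=2/M\ge 1/M$.

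The remaining work is architecture bookkeeping. Running the $K$ branches in parallel multiplies each hidden-layer width by $K$ and adds the sparsities. The depth of each branch equals depth($H_k$) $+$ projection layer $+$ depth($G$), up to a depth-synchronisation layer, which matches the stated $L$. The width of a single branch is dominated by $48(d+\lceil\beta\rceil^3)2^\beta M^{d/\beta}$, which majorises both $6(d+\lceil\beta\rceil)M^{d/\beta}$ from $H_k$ and $48\lceil\beta\rceil^3 2^\beta M^{1/\beta}$ from $G$; multiplication by $K$ yields the stated $\bfm$. Summing sparsities across the $K$ parallel branches and absorbing constants produces the stated $s$. All weights lie in $[-1,1]$ because both source theorems guarantee this and the projection subnetwork only uses weights in $\{-1,0,1\}$ and shifts in $\{0,1\}$. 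The main obstacle I expect is exactly this bookkeeping: cleanly tracking depth, width, and sparsity through composition, projection, and parallelisation by means of the depth-synchronisation and enlarging lemmas of Section \ref{sec.embed_props}, so that the final network-size constants match $L$, $\bfm$, and $s$ exactly.
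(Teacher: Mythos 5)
Your construction is the same as the paper's: compose the H\"older approximants $H_k$ of Theorem \ref{T:ReluNWApproxResult} with the logarithm network $G$ of Theorem \ref{T:MSTLogNetworkResult}, run the $K$ branches in parallel, and put the softmax on top; the size accounting via composition, depth synchronisation, enlarging and parallelisation is also how the paper assembles $L$, $\bfm$ and $s$, and your lower-bound argument ($a_k\ge 4/M$, $\sum_j a_j\le 1+K\epsilon\le 2$) is essentially the paper's.

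The one substantive discrepancy is in the error propagation through the softmax. Your numerator bound $|a_k-p_k^0\sum_j a_j|\le (K-1)\epsilon$ is correct, but you then need the denominator bound $\sum_j a_j\ge 1-K\epsilon\ge 1/2$, i.e.\ $M\ge 2K(4+C_{Q,\beta,d})$, which strengthens the stated hypothesis $M>K(4+C_{Q,\beta,d})$; for $M$ in the intermediate range your estimate only gives $(K-1)\epsilon/(1-K\epsilon)$, so the claimed constant $2K$ is not established on the full stated range. The paper avoids any lower bound on the denominator: writing $\widetilde q_k-p_k^0=e^{G(H_k)}\bigl(\frac{1}{\sum_j e^{G(H_j)}}-1\bigr)+\bigl(e^{G(H_k)}-p_k^0\bigr)$, inserting $1=\sum_\ell p_\ell^0$ in the first term and using that the softmax ratio $e^{G(H_k)}/\sum_j e^{G(H_j)}\le 1$ always, it gets $(K+1)\epsilon\le 2K\epsilon$ for every $M$, with the hypothesis $M>K(4+C_{Q,\beta,d})$ used only for the lower bound $\widetilde q_k\ge 1/M$. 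Either adopt that one-line trick or note explicitly that you prove the lemma under a factor-two stronger requirement on $M$ (harmless for the downstream application, where $M\gg K$, but not the lemma as stated). The remaining bookkeeping, including projecting $H_k$ into $[0,1]$, matches the paper's treatment.
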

\begin{proof}
Composing the neural networks in Theorem \ref{T:MSTLogNetworkResult} and Theorem \ref{T:ReluNWApproxResult} results in a neural network $\bG=(G(H_1),\cdots,G(H_K))$ such that for any $k=1,\cdots,K,$
\begin{equation*}
\begin{aligned}
\big\|e^{G(H_k)}-p_k^0\big\|_{\infty}
\leq \big\|e^{G(H_k)}-H_k\big\|_{\infty}+\big\|H_k-p_k^0\big\|_{\infty}
\leq \frac{4+C_{Q,\beta,d}}{M}.
\end{aligned}
\end{equation*}
Define now the vector valued function $\widetilde{\bq}$ component-wise by
\begin{equation*}
  \widetilde{q}_k(\bx)=\frac{e^{G(H_k(\bx))}}{\sum_{j=1}^Ke^{G(H_j(\bx))}}, \ \ k=1,\cdots,K .
\end{equation*}
Applying the composition \eqref{Eq: Composition}, depth synchronization \eqref{Eq: Depth synchronization} and parallelization rules \eqref{Eq: Parallelization} it follows that $\widetilde{\bq}\in \mF_{\bPhi}(L,\bfm,s)$.
To bound $\|\widetilde{q}_k-p_k^0\|_{\infty}$, we use that $\bp_0=(p_1^0,\cdots,p_K^0)$ is a probability vector, $e^{G(H_j)}\geq 0$ for $j=1,\cdots,k$ and triangle inequality, to obtain
\begin{equation*}
\begin{aligned}
\left\|\widetilde{q}_k-p_k^0\right\|_{\infty}\leq& \left\|e^{G(H_k)}\left(\frac{1}{\sum_{j=1}^Ke^{G(H_j)}}-1\right)\right\|_{\infty}+\left\|e^{G(H_k)}-p_k^0\right\|_{\infty}\\
=&\left\|e^{G(H_k)}\left(\frac{\sum_{\ell=1}^Kp_{\ell}^0}{\sum_{j=1}^Ke^{G(H_j)}}-\frac{\sum_{\ell=1}^Ke^{G(H_{\ell})}}{\sum_{j=1}^Ke^{G(H_j)}}\right)\right\|_{\infty}+\left\|e^{G(H_k)}-p_k^0\right\|_{\infty}\\
\leq& \Big(\sum_{\ell=1}^K\left\|p_{\ell}^0-e^{G(H_{\ell})}\right\|_{\infty}\Big) \left\|\frac{e^{G(H_k(\cdot))}}{\sum_{j=1}^Ke^{G(H_j)}}\right\|_{\infty}+ \left\|e^{G(H_k)}-p_k^0\right\|_{\infty}\\
\leq& \frac{(K+1)(4+C_{Q,\beta,d})}{M}\leq \frac{2K(4+C_{Q,\beta,d})}{M}.
\end{aligned}
\end{equation*}
For the second bound of the lemma, notice that from the first bound of the lemma and the second bound of Theorem \ref{T:MSTLogNetworkResult} it follows that
\begin{equation*}
\widetilde{q}_k(\bx)\geq   \frac{\frac{4}{M}}{\sum_{j=1}^Ke^{G(H_j(\bx))}}\geq \frac{\frac{4}{M}}{1+K\frac{(4+C_{Q,\beta,d})}{M}}=\frac{4}{M+K(4+C_{Q,\beta,d})}\geq \frac{1}{M},
\end{equation*}
where for the second inequality we used that $p_j(\bx)\leq 1$, so $e^{G(H_j(\bx))}\leq p_j^0(\bx)+(4+C_{Q,\beta,d})/M$ and for the last inequality we used that $M\geq K(4+C_{Q,\beta,d}).$
\end{proof}

The Kullback-Leibler divergence can be upper bounded by the $\chi^2$-divergence, see for instance Lemma 2.7 in \cite{tsybakov2008introduction}. Thus,
\begin{equation*}
\mathbb{E}_{\bX}\left[(\bp_0(\bX))^\top\log\left(\frac{\bp_0(\bX)}{\widetilde{\bq}(\bX)}\right)\right]\leq \mathbb{E}_{\bX}\left[\sum_{k=1}^K\frac{(p_k^0(\bX)-\widetilde{q}_k(\bX))^2}{\widetilde{q}_k(\bX)}\right].
\end{equation*}
To control the approximation error, we can combine this bound with the first bound of Lemma \ref{L:ApproximationSoftmaxNetwork} to conclude that if $p_k^0(\bX)>2K(4+C_{Q,\beta,d})/M,$ then
\begin{equation*}
    \frac{(p_k^0(\bX)-\widetilde{q}_k(\bX))^2}{\widetilde{q}_k(\bX)}\leq \frac{4K^2(4+C_{Q,\beta,d})^2}{M^2}\left(p_k^0(\bX)-\frac{2K(4+C_{Q,\beta,d})}{M}\right)^{-1}.
\end{equation*}
On the other hand, combining the bound with the second inequality from the same lemma yields
\begin{equation*}
    \frac{(p_k^0(\bX)-\widetilde{q}_k(\bX))^2}{\widetilde{q}_k(\bX)}\leq\sum_{k=1}^K\frac{4K^2(4+C_{Q,\beta,d})^2}{M^2}\left(\max\Big\{p_k^0(\bX)-\frac{2K(4+C_{Q,\beta,d})}{M},\frac{1}{M}\Big\}\right)^{-1},
\end{equation*}
which is valid for all possible values of $\bp_0(\bx)\in[0,1]^k$. As $M$ tends to infinity, $p_k^0(\bx)-2K(4+C_{Q,\beta,d})/M$ tends to $p_k^0(\bx)$, while $1/M$ tends to zero. Without any further conditions on $p_k^0(\bX)$ this bound is thus of order $M^{-1}.$ The small value bound, however, allows us to obtain an upper bound with better behaviour in $M.$ The following proposition employs the small value bound to control the expectation of $(p_k^0(\bx))^{-1}$ on the set that $p_k^0(\bx)$ exceeds some threshold value $H.$

\begin{propositie}\label{P:UpperBoundIk}
Assume there exists an $\alpha\geq 0$ and a finite constant $C<\infty,$ such that for $\bp =(p_1,\dots,p_K):\mathcal{D}\rightarrow \mathcal{S}^K$ we have $\mathbb{P}_{\bX}(p_k(\bX)\leq t)\leq Ct^{\alpha}$ for all $t\geq 0$ and $k\in \{1,\dots, K\}.$ Let $H\in[0,1]$. Then it holds that
\begin{equation*}
    \int_{\{p_k(\bx)\geq H\}}\frac{1}{p_k(\bx)} \, d\mathbb{P}_{\bX}(\bx)\leq\begin{cases}
    C\frac{H^{\alpha-1}}{1-\alpha}, & \text{ if } \alpha\in[0,1),\\
    C(1-\log(H)), & \text{ if } \alpha\geq 1.
    \end{cases}
\end{equation*}
\end{propositie}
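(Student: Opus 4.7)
The plan is a layer-cake rewrite that turns the integral into a one-variable integral against the CDF of $p_k(\bX)$, which is precisely what the SVB hypothesis controls.

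\textbf{Step 1 (layer-cake identity and Fubini).} For any $p \in (0,1]$ one has the elementary identity $1/p = 1 + \int_p^1 t^{-2}\, dt$. Multiplying by the indicator $\mathbf{1}_{\{p_k(\bx)\geq H\}}$, integrating against $\mathbb{P}_{\bX}$, and applying Fubini gives
\begin{equation*}
\int_{\{p_k \geq H\}} \frac{d\mathbb{P}_{\bX}(\bx)}{p_k(\bx)} = \mathbb{P}_{\bX}\big(p_k(\bX)\geq H\big) + \int_H^1 \mathbb{P}_{\bX}\big(H\leq p_k(\bX)\leq t\big)\,\frac{dt}{t^2}.
\end{equation*}
Evaluating the SVB hypothesis at $t=1$ gives $1 = \mathbb{P}_{\bX}(p_k(\bX)\leq 1) \leq C$, so $C\geq 1$; in particular the boundary term $\mathbb{P}_{\bX}(p_k\geq H)$ is bounded by $1\leq C$. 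The inner probability inside the integral is controlled by $\mathbb{P}_{\bX}(p_k\leq t)\leq Ct^{\alpha}$.

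\textbf{Step 2 (case $\alpha\in[0,1)$).} The integral reduces to $C\int_H^1 t^{\alpha-2}\,dt=C(H^{\alpha-1}-1)/(1-\alpha)$. Adding the boundary contribution yields $C + C(H^{\alpha-1}-1)/(1-\alpha)$. Since $C\geq 1\geq 1-\alpha$, one checks that the resulting expression is at most $CH^{\alpha-1}/(1-\alpha)$, which is the claimed bound.

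\textbf{Step 3 (case $\alpha\geq 1$).} Since $t^{\alpha}\leq t$ on $(0,1]$ for $\alpha\geq 1$, the hypothesis implies the weaker bound $\mathbb{P}_{\bX}(p_k\leq t)\leq Ct$ with the same constant $C$. Then $\int_H^1 \mathbb{P}_{\bX}(H\leq p_k\leq t)t^{-2}\,dt \leq C\int_H^1 t^{-1}\,dt=-C\log H$. Combined with the boundary contribution $\leq C$, this gives $C(1-\log H)$.

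The only subtlety is bookkeeping the constant: one must notice $C\geq 1$ from the hypothesis at $t=1$ in order to absorb the boundary term $\mathbb{P}_{\bX}(p_k\geq H)\leq 1$ into the SVB-derived estimate so that the final bound comes out clean. No deeper machinery is needed beyond the layer-cake identity.
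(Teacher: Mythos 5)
Your proof is correct and is essentially the paper's own argument: the paper evaluates $\int_{\{p_k\geq H\}}p_k^{-1}\,d\mathbb{P}_{\bX}$ via the tail formula $\int_0^{1/H}\mathbb{P}_{\bX}\big(p_k(\bX)\leq 1/u\big)\,du$, which after the substitution $t=1/u$ is exactly your decomposition into a boundary term plus $\int_H^1 \mathbb{P}_{\bX}(p_k(\bX)\leq t)\,t^{-2}\,dt$, with the same use of $C\geq 1$ and the same case split between $\alpha\in[0,1)$ and $\alpha\geq 1$. The only cosmetic difference is that for $\alpha\geq 1$ the paper splits its $u$-integral at $u=C$ (using $\mathbb{P}\leq \min\{1,Ct\}$), whereas you absorb the trivial part into the boundary term $\mathbb{P}_{\bX}(p_k(\bX)\geq H)\leq 1\leq C$; both routes give $C(1-\log H)$.
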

\begin{proof} 
Observe that $p_k(\bX)$ is a probability. Therefore, $p_k(\bX)\leq 1$ and consequently $C\geq 1.$ For any nonnegative function $h$ and random variable $Z \sim \mathbb{P}_Z,$ we have
$\int h(Z)\,d\mathbb{P}_Z=\mathbb{E}[h(Z)]=\int_0^{\infty} \mathbb{P}_Z(h(Z)\geq u) \, du.$
Hence 
\begin{equation*}
    \begin{aligned}
    \int_{\{p_k(\bx)\geq H\}}\frac{1}{p_k(\bx)} \, d\mathbb{P}_{\bX}(\bx)=\int_0^{\infty} \mathbb{P}_{\bX}\left(\frac{1}{p_k(\bX)}\1_{\{p_k(\bX)\geq H\}}\geq u\right)\,du
    \leq \int_0^{\frac{1}{H}} \mathbb{P}_{\bX}\Big(p_k(\bX)\leq \frac{1}{u}\Big)\, du,
    \end{aligned}
\end{equation*}
where the inequality follows from observing that $\frac{1}{p_k(\bX)}\1_{\{p_k(\bX)\geq H\}}\geq u$ implies $H\leq p(\bX)\leq \frac{1}{u}$ and  $u\leq 1/H$.

If $\alpha=0$, we use the trivial bound $\mathbb{P}_{\bX}(p_k(\bx)\leq t)\leq 1,$ for all $t\in[0,1],$ and obtain
\begin{equation*}
    \int_0^{\frac{1}{H}} \mathbb{P}_{\bX}\Big(p_k(\bX)\leq \frac{1}{u}\Big)\, du\leq \int_0^{\frac{1}{H}} 1\, du = \frac{1}{H}.
\end{equation*}
If $0<\alpha<1,$ we can invoke the assumption of this proposition to obtain
\begin{equation*}
    \int_0^{\frac{1}{H}} \mathbb{P}_{\bX}\Big(p_k(\bX)\leq \frac{1}{u}\Big)\, du\leq C\int_0^{\frac{1}{H}} u^{-\alpha}\, du=\frac{CH^{\alpha-1}}{1-\alpha}.
\end{equation*}
For $\alpha \geq 1,$ we have $\mathbb{P}_{\bX}(p_k(\bX)\leq t)\leq Ct$ for all $0 \leq t\leq 1.$ If moreover $C\leq H^{-1}$, the inequality $\mathbb{P}_{\bX}(p_k(\bX)\leq t)\leq \min\{1,Ct\}$ leads to
\begin{equation*}
\begin{aligned}
    \int_0^{\frac{1}{H}} \mathbb{P}_{\bX}\Big(p_k(\bX)\leq \frac{1}{u}\Big)\, du&\leq \int_0^{C} 1\, du+C\int_C^{\frac{1}{H}} \frac 1u \, du
    = C+C(-\log(H)-\log(C)).
\end{aligned}
\end{equation*}
If $\alpha\geq 1$ and $C\geq H^{-1},$ we can upper bound the integral by $\int_0^{C} 1\, du = C.$ The result of the proposition now follows from simplifying the expressions using that $C\geq 1$.
\end{proof}

We can now state and prove the main approximation bound.

\begin{proof}[Proof of Theorem \ref{S:ApproxUnderCondA}]
The condition $\|\bp-\bp_0\|_{\infty}\leq C_1/M$ implies that $p_k(\bx)\geq p^0_k(\bx)-C_1/M.$ Combined with $p_k(\bx)\geq 1/M,$ this gives
\begin{equation*}
    p_k(\bx)\geq \Big(p^0_k(\bx)-\frac {C_1}M\Big) \vee \frac 1M \geq \frac{p^0_k(\bx)}{C_1+1} \vee \frac 1M,
\end{equation*} where we used that $p_k^0(\bx)\geq (C_1+1)/M=((C_1+1)/C_1) \cdot (C_1/M)$ implies
\begin{equation*}
    p_k^0(\bx)-\frac{C_1}{M}\geq p_k^0(\bx)\left(1-\frac{C_1}{C_1+1}\right)=\frac{p_k^0(\bx)}{C_1+1}.
\end{equation*} This gives rise to the upper bound \begin{equation*}
    \frac{(p_k^0(\bX)-p_k(\bX))^2}{p_k(\bX)}\leq \frac{C_1^2}{M}\1_{\{p^0_k(\bx)\leq \frac{C_1+1}{M}\}}+\frac{C_1^2}{M^2}\cdot\frac{C_1+1}{p^0_k(\bx)}\1_{\{p^0_k(\bx)\geq \frac{C_1+1}{M}\}}.
\end{equation*}
Taking the expectation over the right hand side yields
\begin{equation*}
     \frac{C_1^2}{M}\mathbb{P}_{\bX}\left(p^0_k(\bx)\leq \frac{C_1+1}{M}\right)+\frac{C_1^2 (C_1+1)}{M^2}\int_{\{p^0_k(\bx)\geq \frac{C_1+1}{M}\}}\frac{1}{p^0_k(\bx)}\,d\mathbb{P}_{\bX}(\bx)
\end{equation*}
By the $\alpha$-SVB condition the first term is upper bounded by
\begin{equation*}
    \frac{C_1^2}{M}\mathbb{P}_{\bX}\left(p^0_k(\bx)\leq \frac{C_1+1}{M}\right)\leq  \frac{C_1^2C}{M}\left(\frac{C_1+1}{M}\right)^{\alpha \wedge 1}
    \leq C \frac{(C_1+1)^{2+(\alpha \wedge 1)}}{M^{1+(\alpha \wedge 1)}}.
\end{equation*}
Applying Proposition \ref{P:UpperBoundIk} with $H=(C_1+1)/M$ to the second term yields the result.
\end{proof}

Now we have all the ingredients to complete the proof of the main theorem.

\begin{proof}[Proof of Theorem \ref{S: Main Risk Bound}] Take $\delta=n^{-1}$ and $\epsilon=C_n=1$ in
Theorem \ref{S: Main Oracle Inequality}. Using that $d_{\tau}$ is upper bounded by the sup-norm distance together with Lemma \ref{L: Covering Number bound} gives
\begin{equation}\label{eq.oracle_recall}
\begin{aligned}
    R_B(\bp_0,\widehat \bp)&\leq 2\left(\inf_{\bp\in\mF}R(\bp_0,\bp)+\Delta_n(\bp_0,\widehat{\bp})+\frac{3}{n}\right)\\
&+4\cdot\frac{68B(s+1)\log(2^{2L+6}n(L+1)K^3d^2s^L)+272B+(3/2)K(\log(n)+B)}{n}.
\end{aligned}
\end{equation}
Recall that $0\leq \alpha \leq 1$ is the index from the SVB condition. We now choose $M= \lfloor c K^{\frac{(2+\alpha)\beta}{(1+\alpha)\beta+d}} n^{\frac{\beta}{(1+\alpha)\beta+d}} \rfloor$ for a small constant $c$ chosen below. To apply Lemma \ref{L:ApproximationSoftmaxNetwork}, we need to show that $M \gg K.$ To see this, observe that $R_B(\bp_0,\widehat \bp)\leq B$ and therefore the convergence rate becomes trivial if  $\phi_n \geq 1.$ Using that $\phi_n=K^{\frac{(1+\alpha)\beta+(3+\alpha)d}{(1+\alpha)\beta+d}} n^{-\frac{(1+\alpha)\beta}{(1+\alpha)\beta+d}},$ this implies $K\leq n^{\frac{(1+\alpha)\beta}{(1+\alpha)\beta+(3+\alpha)d}}\leq n^{\frac{\beta}{\beta+2d}}\leq n^{\frac{\beta}{2d}}.$ Hence, $K^{d-\beta}\ll n^\beta$ and thus also $M \gg K.$

For this choice of $M,$ the network $\widetilde{\bq}$ from Lemma \ref{L:ApproximationSoftmaxNetwork} is in the network class $\mF_{\bPhi}(L,\bfm,s),$ where $L=3\ceil{\log_2(M)(d/\beta+1)}(1+\ceil{\log_2(d+\beta)})+\floor{40(\beta+2)^2\log_2(M)}+2,$ the maximum width of the hidden layers is bounded by $\lesssim Kc^{d/\beta}M^{d/\beta}=c^{d/\beta}n\phi_n$ and similarly $s\lesssim Kc^{d/\beta} M^{d/\beta}\log_2(M)=c^{d/\beta}n\phi_n\log_2(M).$ In particular, by taking $c$ sufficiently small and using the depth synchronization property \eqref{Eq: Depth synchronization}, $\widetilde{\bq}\in \mF_{\bPhi}(L,\bfm,s),$ whenever $A(d,\beta)\log_2(n)\leq L\lesssim n\phi_n,$ for a suitable constant $A(d,\beta),$ the maximum width is $\gtrsim n\phi_n$ and $s\asymp n\phi_n \log(n).$ We now apply Theorem \ref{S:ApproxUnderCondA} with $C_1=2K(4+C_{Q,\beta,d}).$ Using that $C_1+1=2K(4+C_{Q,\beta,d})+1\leq 2K(5+C_{Q,\beta,d}),$ we find
\begin{equation*}
\begin{aligned}
    \inf_{\bp\in\mF}R(\bp_0,\bp)
    \leq 
    8CK^{3+\alpha} \frac{(5+C_{Q,\beta,d})^3}{M^{1+\alpha}}
    \Big(
    1+\frac{\1_{\{\alpha<1\}}}{1-\alpha}
    +\log(M)\Big) \lesssim \phi_n \log(n).
\end{aligned}
\end{equation*}
Together with \eqref{eq.oracle_recall} and $s\asymp n\phi_n \log(n),$ the statement of Theorem \ref{S: Main Risk Bound} follows.
\end{proof}

\subsection{Oracle inequality related results}

In this section we prove Theorem \ref{S: Main Oracle Inequality}. For $B>0$, consider
\begin{align*}
	R_{B,n}(\bp_0,\widehat \bp):=\E_{\mD_n} \Big[\frac{1}{n}\sum_{i=1}^n\bY^\top_i\Big(B\wedge \log\Big(\frac{\bp_0(\bX_i)}{\widehat \bp(\bX_i)} \Big)\Big].
\end{align*}
The next proposition shows how this risk is related to the approximation error and the quantity $\Delta_n(\bp_0,\widehat{\bp})$ defined in \eqref{Eq: Difference Expected Empirical Risk Estimator vs Minimum} that measures the empirical distance between an arbitrary estimator and an empirical risk minimizer.

\begin{propositie}\label{P: Upper Bound Risk Delta}
For any estimator $\widehat{\bp}\in\mF$,
$$R_{B,n}(\bp_0,\widehat \bp)\leq R_{\infty,n}(\bp_0,\widehat{\bp})\leq \inf_{\bp\in\mF}R(\bp_0,\bp)+\Delta_n(\bp_0,\widehat{\bp}).$$
\end{propositie}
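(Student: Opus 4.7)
The plan is to prove the two inequalities separately, the first by an elementary pointwise comparison, the second by using the definition of $\Delta_n$ and the tower property of conditional expectation to connect empirical to population risks.

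For the first inequality, $R_{B,n}(\bp_0,\widehat\bp) \le R_{\infty,n}(\bp_0,\widehat\bp)$, note that the operation $B\wedge(\cdot)$ is applied componentwise, and for any real number $x$ we have $B\wedge x \le x$. Since the response vectors $\bY_i$ are standard basis vectors and in particular have nonnegative entries, taking inner products preserves the inequality: for each $i$,
\begin{equation*}
    \bY_i^\top\Big(B\wedge\log\Big(\frac{\bp_0(\bX_i)}{\widehat\bp(\bX_i)}\Big)\Big) \le \bY_i^\top\log\Big(\frac{\bp_0(\bX_i)}{\widehat\bp(\bX_i)}\Big).
\end{equation*}
Averaging over $i$ and taking expectation gives the first inequality.

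For the second inequality, I would split the logarithm:
\begin{equation*}
    R_{\infty,n}(\bp_0,\widehat\bp)=\E_{\mD_n}\Big[\tfrac1n\sum_{i=1}^n\bY_i^\top\log\bp_0(\bX_i)\Big]+\E_{\mD_n}\Big[-\tfrac1n\sum_{i=1}^n\bY_i^\top\log\widehat\bp(\bX_i)\Big].
\end{equation*}
By the definition \eqref{Eq: Difference Expected Empirical Risk Estimator vs Minimum} of $\Delta_n(\bp_0,\widehat\bp)$, the second term equals $\Delta_n(\bp_0,\widehat\bp)+\E_{\mD_n}[\min_{\bp\in\mF}-\tfrac1n\sum_{i=1}^n\bY_i^\top\log\bp(\bX_i)]$. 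For any fixed $\bp^*\in\mF$, the minimum over $\mF$ is bounded above by the value at $\bp^*$; taking expectation and using that the $(\bX_i,\bY_i)$ are i.i.d.\ together with $\E[\bY_i|\bX_i]=\bp_0(\bX_i)$ gives
\begin{equation*}
\E_{\mD_n}\Big[\min_{\bp\in\mF}-\tfrac1n\sum_{i=1}^n\bY_i^\top\log\bp(\bX_i)\Big]\le \E_\bX\big[-\bp_0(\bX)^\top\log\bp^*(\bX)\big].
\end{equation*}
Applying the same conditioning argument to the first term of the split gives $\E_\bX[\bp_0(\bX)^\top\log\bp_0(\bX)]$. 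Adding the two and recognizing the definition of $R(\bp_0,\bp^*)=\E_\bX[\bp_0(\bX)^\top\log(\bp_0(\bX)/\bp^*(\bX))]$ yields
\begin{equation*}
    R_{\infty,n}(\bp_0,\widehat\bp)\le R(\bp_0,\bp^*)+\Delta_n(\bp_0,\widehat\bp).
\end{equation*}
Taking the infimum over $\bp^*\in\mF$ finishes the proof.

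There is no real obstacle here: the result is essentially a bookkeeping exercise. The only small subtlety is to remember to apply $B\wedge(\cdot)$ componentwise in the first step and to use the nonnegativity of $\bY_i$, and to correctly read off $\Delta_n$ from its definition so that the gap between $\widehat\bp$ and the empirical risk minimizer is separated cleanly.
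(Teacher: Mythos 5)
Your proof is correct and follows essentially the same route as the paper: the first inequality from $B\wedge x\le x$ together with the nonnegativity of the entries of $\bY_i$, and the second by fixing $\bp^*\in\mF$, bounding the empirical minimum by the value at $\bp^*$ (equivalently, using $\Delta_n(\bp_0,\bp^*)\ge 0$), applying the tower property $\E[\bY_i\mid\bX_i]=\bp_0(\bX_i)$, and taking the infimum over $\bp^*$. No gaps.
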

\begin{proof}
The first inequality follows from $a\geq \min(a,b)$, for all $a,b\in\mathbb{R}$. To prove the second inequality, fix a $\bp^*\in\mF$. Using that $\Delta_n(\bp_0,\bp^*)\geq 0$ and $$\mathbb{E}_{\mD_n}\big[\bY_i^\top\log(\bp^*(\bX_i))\big]=\mathbb{E}_{\mD_n}\big[\mathbb{E}_{\mD_n}[\bY_i^\top|\bX_i]\log(\bp^*(\bX_i))\big]=\mathbb{E}_{\mD_n}\big[\bp_0(\bX_i)^\top\log(\bp^*(\bX_i))\big],$$ we get
 \begin{equation*}
     \begin{aligned}
     \mathbb{E}_{\mD_n}\Big[-\frac{1}{n}\sum_{i=1}^n\bY_i^\top\log(\widehat{\bp}(\bX_i))\Big]&\leq\mathbb{E}_{\mD_n}\Big[-\frac{1}{n}\sum_{i=1}^n\bY_i^\top\log(\widehat{\bp}(\bX_i))\Big]+\Delta_n(\bp_0,\bp^*)\\
    &=\mathbb{E}_{\mD_n}\Big[-\frac{1}{n}\sum_{i=1}^n\bY_i^\top\log(\bp^*(\bX_i))\Big]+\Delta_n(\bp_0,\widehat{\bp}) \\
    &=\mathbb{E}_{\bX}\Big[-\bp_0^\top(\bX)\log(\bp^*(\bX))\Big]+\Delta_n(\bp_0,\widehat{\bp}).
     \end{aligned}
 \end{equation*}
As this holds for all $\bp^*\in\mF$, we can take on the right hand side also the infimum over all $\bp^*\in\mF.$ To complete the proof for the second inequality, we add to both sides $\mathbb{E}_{\mD_n}[\bY_i^\top\log(\bp_0(\bX_i))]=\mathbb{E}_{\mD_n}[\bp_0(\bX_i)^\top\log(\bp_0(\bX_i))].$
\end{proof}

The truncation level $B$ allows us to split the statistical risk into multiple parts that can be controlled separately. The following lemma provides a bound on the event that $p_k^0(\bX)$ is small.

\begin{Lemma}\label{L: Bound for case of small p0}
Let $\mF$ be a class of conditional class probabilities, $\widehat{\bp}$ be any estimator taking values in $\mF$, $(\overline \bX,\overline \bY)$ be a random pair with the same distribution as $(\bX_1,\bY_1)$ and $C_n \in (0,n/e]$. Then, for any $i\in\{1,\cdots,n\},$ and any $k\in\{1,\cdots,K\},$ we have
\begin{equation*}
\left|\mathbb{E}_{\mD_n,(\overline\bX,\overline\bY)}\left[\overline Y_{k}\1_{\{p_{k}^0(\overline\bX)\leq\frac{C_n}{n}\}}\Big( B \wedge \log\Big(\frac{p_{k}^0(\overline\bX)}{\widehat{p}_k(\overline\bX)}\Big)\Big)\right]\right|\leq \frac{C_n\big(\log\big(\frac{n}{C_n}\big)+B\big)}{n}.
\end{equation*}
\end{Lemma}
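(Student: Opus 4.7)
The plan is to peel off the randomness of $\overline\bY$ by conditioning, then derive a two-sided pointwise bound on the integrand on the event $\{p_k^0(\overline\bX)\le C_n/n\}$.

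First I would use the tower property: since $\widehat{\bp}$ depends only on $\mD_n$ and is therefore independent of $(\overline\bX,\overline\bY)$, and since $\E[\overline Y_k\mid \overline\bX]=p_k^0(\overline\bX)$, the expectation in the statement equals
\begin{equation*}
\E_{\mD_n,\overline\bX}\!\left[p_k^0(\overline\bX)\,\1_{\{p_k^0(\overline\bX)\le C_n/n\}}\Big(B\wedge\log\Big(\tfrac{p_k^0(\overline\bX)}{\widehat p_k(\overline\bX)}\Big)\Big)\right].
\end{equation*}
This reduces the task to an almost-sure bound on the integrand, which I will then integrate with absolute value inside.

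Next, on the event $A:=\{p_k^0(\overline\bX)\le C_n/n\}$, I would establish two-sided control of the quantity $p_k^0(\overline\bX)\big(B\wedge \log(p_k^0(\overline\bX)/\widehat p_k(\overline\bX))\big)$. For the upper bound, since $B\wedge(\cdot)\le B$ and $p_k^0\le C_n/n$ on $A$, the quantity is at most $(C_n/n)B$. For the lower bound, I would split into cases: if $\log(p_k^0/\widehat p_k)\le B$, then the minimum equals $\log(p_k^0/\widehat p_k)=\log p_k^0-\log\widehat p_k\ge \log p_k^0$ since $\widehat p_k\in(0,1]$; if $\log(p_k^0/\widehat p_k)>B$, the minimum equals $B\ge 0$. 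In either case, multiplying by $p_k^0\ge 0$ yields
\begin{equation*}
p_k^0\Big(B\wedge\log\Big(\tfrac{p_k^0}{\widehat p_k}\Big)\Big)\ge p_k^0\log p_k^0.
\end{equation*}

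Then I would use the monotonicity of $x\mapsto x\log x$. Because $C_n\le n/e$, we have $C_n/n\le 1/e$, and $x\log x$ is decreasing on $(0,1/e]$ (with the convention $0\log 0=0$ making this bound hold at the endpoint too). Hence on $A$,
\begin{equation*}
p_k^0(\overline\bX)\log p_k^0(\overline\bX)\;\ge\;\tfrac{C_n}{n}\log\!\big(\tfrac{C_n}{n}\big)\;=\;-\tfrac{C_n}{n}\log\!\big(\tfrac{n}{C_n}\big).
\end{equation*}
Combining with the upper bound, the absolute value of the integrand on $A$ is at most $(C_n/n)\big(B+\log(n/C_n)\big)$, and it vanishes off $A$.

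Finally, moving the absolute value inside the expectation and bounding the probability of $A$ by $1$ gives exactly the stated inequality. The only subtle point is the lower bound step: recognising that the truncation $B\wedge(\cdot)$ always yields something at least $p_k^0\log p_k^0$, and then invoking the monotonicity of $x\log x$ on $(0,1/e]$, which is precisely where the hypothesis $C_n\le n/e$ is used.
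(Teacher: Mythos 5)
There is a genuine gap in your first step. You replace $\overline Y_k$ by $p_k^0(\overline\bX)$ via the tower property, justifying this by ``$\widehat{\bp}$ depends only on $\mD_n$ and is therefore independent of $(\overline\bX,\overline\bY)$.'' But the lemma only assumes that $(\overline\bX,\overline\bY)$ has the \emph{same distribution} as $(\bX_1,\bY_1)$, not that it is independent of the data; indeed, in the proof of the oracle inequality the lemma is applied with $(\overline\bX,\overline\bY)=(\bX_i,\bY_i)$, one of the training pairs (this is how the term (III) is handled), and the paper explicitly remarks after Corollary \ref{C: Upper and Lower Bound for case of small p0} that independence is not required. In that case $\widehat p_k(\overline\bX)$ is a function of $\overline\bY$ through $\mD_n$, so conditioning on $\overline\bX$ does not allow you to replace $\overline Y_k$ by $p_k^0(\overline\bX)$ inside the product with $B\wedge\log\big(p_k^0(\overline\bX)/\widehat p_k(\overline\bX)\big)$: the integrand is not of the form $\overline Y_k\, g(\overline\bX)$ with $g$ measurable with respect to the conditioning variable. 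So as written your argument proves the statement only under an extra independence hypothesis that the intended applications do not satisfy.

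The repair is a reordering, and it brings you back to the paper's proof. Your case analysis already establishes the pointwise two-sided bound $\log\big(p_k^0(\overline\bX)\big)\leq B\wedge\log\big(p_k^0(\overline\bX)/\widehat p_k(\overline\bX)\big)\leq B$ (using $\widehat p_k\leq 1$ and $p_k^0\leq 1$), which is exactly \eqref{Eq: Double Bound for case small p0}. Apply this bound \emph{before} conditioning: it sandwiches the integrand between $\overline Y_k\1_{\{p_k^0(\overline\bX)\leq C_n/n\}}\log\big(p_k^0(\overline\bX)\big)$ and $\overline Y_k\1_{\{p_k^0(\overline\bX)\leq C_n/n\}}B$, both of which depend only on $(\overline\bX,\overline\bY)$ and not on $\widehat\bp$ or $\mD_n$. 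For such integrands the identity $\mathbb{E}[\overline Y_k\, g(\overline\bX)]=\mathbb{E}[p_k^0(\overline\bX) g(\overline\bX)]$ is valid regardless of any dependence on the data, and then your monotonicity argument for $x\mapsto x\log x$ on $(0,1/e]$ (equivalently the paper's $u\mapsto u|\log u|$ increasing on $(0,e^{-1})$, using $C_n\leq n/e$) yields the bound $\frac{C_n(\log(n/C_n)+B)}{n}$. With that reordering your proof coincides with the paper's.
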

\begin{proof}
Since $\bp_0,\widehat{\bp}\in[0,1]^K$, we have
\begin{equation}\label{Eq: Double Bound for case small p0}
\log(p_{k}^0(\overline\bX))\leq B \wedge \log\Big(\frac{p_{k}^0(\overline\bX)}{\widehat{p}_k(\overline\bX)}\Big)\leq B.
\end{equation}
Using that $a\leq x\leq b$ implies $|x|\leq \max\{|a|,|b|\}\leq |a|+|b|$ and $Y_k\geq 0$, we can get an upper bound that does not depend on $\widehat{\bp}$
\begin{equation*}
\begin{aligned}
&\left|\mathbb{E}_{\mD_n,(\overline\bX,\overline\bY)}\left[\overline Y_{k}\1_{\{p_{k}^0(\overline\bX)\leq\frac{C_n}{n}\}}\Big( B \wedge \log\Big(\frac{p_{k}^0(\overline\bX)}{\widehat{p}_k(\overline\bX)}\Big)\Big)\right]\right|\\
&\leq \mathbb{E}_{(\overline\bX,\overline\bY)}\left[\overline Y_{k}\1_{\{p_{k}^0(\overline\bX)\leq\frac{C_n}{n}\}}\left|\log(p_{k}^0(\overline\bX))\right|\right]+\mathbb{E}_{(\overline\bX,\overline\bY)}\left[\overline Y_{k}\1_{\{p_{k}^0(\overline\bX)\leq\frac{C_n}{n}\}}B\right]\\
&=\mathbb{E}_{\overline\bX}\left[p_{k}^0(\overline\bX)\1_{\{p_{k}^0(\overline\bX)\leq\frac{C_n}{n}\}}\left|\log(p_{k}^0(\overline\bX))\right|\right]+\mathbb{E}_{\overline\bX}\left[p_{k}^0(\overline\bX)\1_{\{p_{k}^0(\overline\bX)\leq\frac{C_n}{n}\}}B\right],
\end{aligned}
\end{equation*}
where the last equality follows from conditioning on $\overline\bX$. Using that the function $u\mapsto u|\log(u)|$ is monotone increasing on $(0,e^{-1})$ and $n\geq eC_n$, yields
\begin{equation*}
\left|\mathbb{E}_{\mD_n,(\overline\bX,\overline\bY)}\left[\overline Y_{k}\1_{\{p_{k}^0(\overline\bX)\leq\frac{C_n}{n}\}}\Big( B \wedge \log\Big(\frac{p_{k}^0(\overline\bX)}{\widehat{p}_k(\overline\bX)}\Big)\Big)\right]\right|\leq \frac{C_n\big(\log\big(\frac{n}{C_n}\big)+B\big)}{n}.
\end{equation*}
\end{proof}
\begin{Corollary}\label{C: Upper and Lower Bound for case of small p0}
Under the conditions of Lemma \ref{L: Bound for case of small p0} it holds that
\begin{equation*}
-\frac{C_n\log(n/C_n)}{n}\leq\mathbb{E}_{\mD_n,(\overline\bX,\overline\bY)}\left[\overline Y_{k}\1_{\{p_{k}^0(\overline\bX)\leq\frac{C_n}{n}\}}\Big( B \wedge \log\Big(\frac{p_{k}^0(\overline\bX)}{\widehat{p}_k(\overline\bX)}\Big)\Big)\right]\leq \frac{C_nB}{n}.
\end{equation*}
\end{Corollary}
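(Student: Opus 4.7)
The plan is to refine the proof of Lemma \ref{L: Bound for case of small p0} by exploiting the signed double inequality
\[
\log(p_{k}^0(\overline\bX))\leq B \wedge \log\Big(\frac{p_{k}^0(\overline\bX)}{\widehat{p}_k(\overline\bX)}\Big)\leq B
\]
from display (\ref{Eq: Double Bound for case small p0}) directly, instead of passing through the absolute value. Multiplying by $\overline Y_k \mathbf{1}_{\{p_{k}^0(\overline\bX)\leq C_n/n\}}\geq 0$ preserves the inequality, so I can handle the two sides separately.

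For the upper bound, the right inequality gives
\[
\mathbb{E}_{\mD_n,(\overline\bX,\overline\bY)}\Big[\overline Y_{k}\mathbf{1}_{\{p_{k}^0(\overline\bX)\leq C_n/n\}}\big(B\wedge\log(p_k^0/\widehat p_k)\big)\Big]\leq B\,\mathbb{E}_{(\overline\bX,\overline\bY)}\Big[\overline Y_{k}\mathbf{1}_{\{p_{k}^0(\overline\bX)\leq C_n/n\}}\Big].
\]
Conditioning on $\overline\bX$ turns $\mathbb{E}[\overline Y_k\,|\,\overline\bX]$ into $p_k^0(\overline\bX)$, and since the indicator forces $p_k^0(\overline\bX)\leq C_n/n$, the right hand side is at most $BC_n/n$, giving the desired upper bound.

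For the lower bound, the left inequality gives
\[
\mathbb{E}_{\mD_n,(\overline\bX,\overline\bY)}\Big[\overline Y_{k}\mathbf{1}_{\{p_{k}^0(\overline\bX)\leq C_n/n\}}\big(B\wedge\log(p_k^0/\widehat p_k)\big)\Big]\geq \mathbb{E}_{\overline\bX}\Big[p_k^0(\overline\bX)\mathbf{1}_{\{p_{k}^0(\overline\bX)\leq C_n/n\}}\log p_k^0(\overline\bX)\Big],
\]
again after conditioning on $\overline\bX$. On the event $\{p_k^0(\overline\bX)\leq C_n/n\}$ we have $p_k^0(\overline\bX)\leq 1/e$ since $C_n\leq n/e$, so $\log p_k^0(\overline\bX)\leq 0$ and $|p_k^0\log p_k^0|=-p_k^0\log p_k^0$. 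I can then invoke the same monotonicity of $u\mapsto u|\log u|$ on $(0,e^{-1}]$ that was used in the proof of Lemma \ref{L: Bound for case of small p0} to get $-p_k^0\log p_k^0\leq (C_n/n)\log(n/C_n)$ pointwise on the event, and bounding the probability of the event by one yields the lower bound $-C_n\log(n/C_n)/n$.

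The only mild subtlety is verifying that $\log p_k^0(\overline\bX)$ really serves as a pointwise lower bound for $B\wedge\log(p_k^0/\widehat p_k)$; this is immediate since $\widehat p_k(\overline\bX)\leq 1$ gives $\log(p_k^0/\widehat p_k)\geq \log p_k^0$, while $\log p_k^0\leq 0\leq B$ handles the minimum with $B$. No new estimates beyond those already present in the proof of Lemma \ref{L: Bound for case of small p0} are needed — the corollary is essentially a repackaging that keeps the sign information, so there is no genuine obstacle, merely the bookkeeping of splitting the $|\cdot|$ into its two signed halves.
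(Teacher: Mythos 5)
Your argument is correct and is essentially the paper's own proof: the authors likewise derive both bounds from the double inequality \eqref{Eq: Double Bound for case small p0}, the nonnegativity $\overline Y_k\geq 0$, conditioning on $\overline\bX$, and the monotonicity of $u\mapsto u\log(u)$ on $(0,e^{-1})$, which is the same fact as your monotonicity of $u\mapsto u|\log(u)|$. Your write-up merely spells out the bookkeeping that the paper's one-line proof leaves implicit.
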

\begin{proof}
The lower and upper bound can be obtained from \eqref{Eq: Double Bound for case small p0}, $\overline Y_k\geq 0$ and the fact that $u\mapsto u\log(u)$ is monotone decreasing on $(0,e^{-1})$.
\end{proof}
Both Lemma \ref{L: Bound for case of small p0} and Corollary \ref{C: Upper and Lower Bound for case of small p0} do not require that the random pair $(\overline\bX,\overline\bY)$ is independent of the data. Specifically, they also hold in the case that $(\overline\bX,\overline\bY)=(\bX_i,\bY_i)$ for some $i\in\{1,\cdots,n\}$.

\begin{proof}[Proof of Theorem \ref{S: Main Oracle Inequality}]
For ease of notation set
$$\Big( B \wedge \log\Big(\frac{\bp_0(\bX_i)}{\widehat{\bp}(\bX_i)}\Big)\Big)_{\geq C_n/n}$$ to denote the vector with coefficients
$$\1_{\{p_{k}^0(\bX_i)\geq\frac{C_n}{n}\}} \Big( B \wedge \log\Big(\frac{p_{k}^0(\bX_i)}{\widehat{p}_k(\bX_i)}\Big)\Big), \quad k=1,\dots,K.$$
For i.i.d. random pairs $(\wbX_i,\wbY_i)$, $i=1,\cdots,n$ with joint distribution $\mathbb{P}$ that are generated independently of the data sample define $\mD_n':=\{(\bX_i,\bY_i)_i,(\wbX_i,\wbY_i)_i\}$. Then, for any $C_n>0$,
\begin{align}
\begin{aligned}
&|R_{B}(\bp_0,\widehat{\bp})-R_{B,n}(\bp_0,\widehat{\bp})|\\
&=\left|\mathbb{E}_{\mD_n'}\left[\frac{1}{n}\sum_{i=1}^n\sum_{k=1}^K \widetilde Y_{i,k}\Big( B \wedge \log\Big(\frac{p_{k}^0(\wbX_i)}{\widehat{p}_k(\wbX_i)}\Big)\Big)-\frac{1}{n}\sum_{i=1}^n\sum_{k=1}^K Y_{i,k} \Big( B \wedge \log\Big(\frac{p_{k}^0(\bX_i)}{\widehat{p}_k(\bX_i)}\Big)\Big)\right]\right|\\
&\leq  (I) + (II) + (III),
\end{aligned}\label{eq.I-III_decomp}
\end{align}
where
\begin{align*}
(I) &=\left|\mathbb{E}_{\mD_n'}\left[\frac{1}{n}\sum_{i=1}^n\left(\widetilde \bY_i^\top  \Big( B \wedge \log\Big(\frac{\bp_0(\widetilde \bX_i)}{\widehat{\bp}(\widetilde\bX_i)}\Big)\Big)_{\geq C_n/n}
- \bY_i^\top  \Big( B \wedge \log\Big(\frac{\bp_0( \bX_i)}{\widehat{\bp}(\bX_i)}\Big)\Big)_{\geq C_n/n}\right)\right]\right|\\
(II) &=\left|\mathbb{E}_{\mD_n'}\left[\frac{1}{n}\sum_{i=1}^n\sum_{k=1}^K \widetilde{Y}_{i,k}\1_{\{p_{k}^0(\wbX_i)\leq\frac{C_n}{n}\}}\Big( B \wedge \log\Big(\frac{p_{k}^0(\wbX_i)}{\widehat{p}_k(\wbX_i)}\Big)\Big)\right]\right|\\
(III)&=\left|\mathbb{E}_{\mD_n'}\left[\frac{1}{n}\sum_{i=1}^n\sum_{k=1}^K Y_{i,k}\1_{\{p_{k}^0(\bX_i)\leq\frac{C_n}{n}\}}\Big( B \wedge \log\Big(\frac{p_{k}^0(\bX_i)}{\widehat{p}_k(\bX_i)}\Big)\Big)\right]\right|.
\end{align*}
First we bound the terms (II) and (III). Applying Lemma \ref{L: Bound for case of small p0} in total $nK$ times with $(\overline\bX,\overline\bY)=(\widetilde \bX_i,\widetilde \bY_i)$,  yields
\begin{equation}
(II)\leq \frac{1}{n}\sum_{i=1}^n\sum_{k=1}^K \frac{C_n\big(\log\big(\frac{n}{C_n}\big)+B\big)}{n}=\frac{C_nK\big(\log\big(\frac{n}{C_n}\big)+B\big)}{n},
\label{eq.II_bd}
\end{equation}
while taking $(\overline\bX,\overline\bY)=(\bX_i,\bY_i)$ in Lemma \ref{L: Bound for case of small p0} yields
\begin{equation}
(III)\leq \frac{1}{n}\sum_{i=1}^n\sum_{k=1}^K \frac{C_n\big(\log\big(\frac{n}{C_n}\big)+B\big)+B)}{n}=\frac{C_nK\big(\log\big(\frac{n}{C_n}\big)+B\big)}{n}.
\label{eq.III_bd}
\end{equation}

Now we deal with the term (I). 
Due to the bound $B$ and the indicator function
\begin{equation}\label{Eq: Upper-lower bound property}
\1_{\{p_{k}^0(\bX_i)\geq\frac{C_n}{n}\}} \Big( B \wedge \log\Big(\frac{p_{k}^0(\bX_i)}{\widehat{p}_k(\bX_i)}\Big)\Big)=\1_{\{p_{k}^0(\bX_i)\geq\frac{C_n}{n}\}} \Big( B \wedge \log\Big(\frac{p_{k}^0(\bX_i)}{(C_ne^{-B}/n)\vee\widehat{p}_k(\bX_i)}\Big)\Big).
\end{equation}
Given a minimal (internal) $\delta$-covering of $\log(\mF)$ with respect to the pseudometric $d_{\tau}$, with $\tau=\log(C_ne^{-B}/n)$, denote the centers of the balls by $\bp_\ell$. Then there exists a random $\ell^*$ such that $$\Big\|\log\Big(\frac{C_ne^{-B}}n\Big)\vee\log(\widehat{\bp})-\log\Big(\frac{C_ne^{-B}}n\Big)\vee\log(\bp_{\ell^*})\Big\|_{\infty}\leq \delta.$$ 
This together with \eqref{Eq: Upper-lower bound property} and using that $\bY$ is one of the $K$-dimensional standard basis vectors yields
\begin{equation}\label{Eq: Split recombined}
(I)\leq \mathbb{E}_{\mD_n'}\left[\left|\frac{1}{n}\sum_{i=1}^nG_{\ell^*}(\wbX_i,\wbY_i,\bX_i,\bY_i)\right|\right]+2\delta,
\end{equation}
where $$G_{\ell^*}(\wbX_i,\wbY_i,\bX_i,\bY_i):= \wbY_i^\top\Big(B \wedge \log\Big( \frac{\bp_0(\wbX_i)}{\bp_{\ell^*}(\wbX_i)}\Big)\Big)_{\geq C_n/n}- \bY_i^\top\Big( B \wedge\log\Big( \frac{\bp_0(\bX_i)}{\bp_{\ell^*}(\bX_i)}\Big)\Big)_{\geq C_n/n}.$$ 
For all $\ell\in\{1,\cdots,\mathcal{N}_n\}$ define $G_{\ell}$ in the same way. Moreover, write
$$\mathbf{Z}_i:=(\wbX_i,\wbY_i,\bX_i,\bY_i).$$
In a next step, we apply Bernstein's inequality (Proposition \ref{P: Bernsteins Inequality}) to $(G_{\ell}(\mathbf{Z}_i))_{i=1}^n$. Using that $(\bX_i,\bY_i)$ and $(\wbX_i,\wbY_i)$ have the same distribution, we get for the expectation of $G_{\ell}$ that
\begin{equation*}
\begin{aligned}
&\mathbb{E}_{\mD_n'}[G_{\ell}(\mathbf{Z}_i)]=0.
\end{aligned}
\end{equation*}
To verify the assumptions of Bernstein's inequality, it remains to prove that
\begin{equation}\label{Eq: Bernstein Moment Bound}
\mathbb{E}|G_{\ell}(\mathbf{Z}_i)|^m\leq m!(2B)^{m-2}R_B(\bp_0,\bp_{\ell})32B2^{-1}, \ \forall m\in\mathbb{N}_{\geq 2},
\end{equation}
such that, in the notation of Proposition \ref{P: Bernsteins Inequality}, we have $v_i=R_B(\bp_0,\bp_{\ell})32B$ and $U=2B$. To show this moment bound, observe that any real numbers $a,b$ satisfy $|a+b|^m\leq 2^m(|a|^m+|b|^m).$ Using moreover that $(\bX_i,\bY_i)$ and $(\wbX_i,\wbY_i)$ have the same distribution, the $m$-th absolute moment of $G_{\ell}$ is given by
\begin{equation*}
\begin{aligned}
&\mathbb{E}_{\mD_n'}\big[|G_{\ell}(\mathbf{Z}_i)|^m\big]\\
&=\mathbb{E}_{\mD_n'}\left[\left|\wbY_i^\top\left(B\wedge\log\left(\frac{\bp_0(\wbX_i)}{\bp_{\ell}(\wbX_i)}\right)\right)_{\geq C_n/n}-\bY_i^\top\left(B\wedge\log\left(\frac{\bp_0(\bX_{i})}{\bp_{\ell}(\bX_{i})}\right)\right)_{\geq C_n/n}\right|^m\right] \\
&\leq 2^{m+1}\mathbb{E}_{\mD_n}\left[\left|\bY_i^\top\left(B\wedge\log\left(\frac{\bp_0(\bX_{i})}{\bp_{\ell}(\bX_{i})}\right)\right)_{\geq C_n/n}\right|^m\right].
\end{aligned}
\end{equation*}
Triangle inequality gives
\begin{equation*}
\begin{aligned}
&\mathbb{E}_{\mD_n}\left[\left|\bY_i^\top\left(B\wedge\log\left(\frac{\bp_0(\bX_{i})}{\bp_{\ell}(\bX_{i})}\right)\right)_{\geq C_n/n}\right|^m\right]
\leq \mathbb{E}_{\mD_n} \left[\left(\bY_i^\top\left|\left(B\wedge\log\left(\frac{\bp_0(\bX_i)}{\bp_{\ell}(\bX_i)}\right)\right)_{\geq C_n/n}\right|\right)^m\right],
\end{aligned}
\end{equation*}
where for a vector $\mathbf{v}$, $|\mathbf{v}|$ denotes the absolute value coefficient-wise.
Since $\bY$ is one of the standard basis vectors, it holds that $Y_k\in\{0,1\}$, and $Y_kY_j$ is equal to $0$ when $j\neq k$ and equal to $Y_k$ when $k=j$. Using this observation together with conditioning on $\bX_i$ yields
\begin{equation*}
\begin{aligned}
&\mathbb{E}_{\mD_n} \left[\left(\bY_i^\top\left|\left(B\wedge\log\left(\frac{\bp_0(\bX_i)}{\bp_{\ell}(\bX_i)}\right)\right)_{\geq C_n/n}\right|\right)^m\right]\\
&=\mathbb{E}_{\mD_n} \left[\bY_i^\top\left|\left(B\wedge\log\left(\frac{\bp_0(\bX_i)}{\bp_{\ell}(\bX_i)}\right)\right)_{\geq C_n/n}\right|^m\right]\\
&=\mathbb{E}_{\bX_i}\left[\bp_0^\top(\bX_i)\left|\left(B\wedge\log\left(\frac{\bp_0(\bX_i)}{\bp_{\ell}(\bX_i)}\right)\right)_{\geq C_n/n}\right|^m\right]\\ &\leq \mathbb{E}_{\bX_i}\left[\bp_0^\top(\bX_i)\left|B\wedge\log\left(\frac{\bp_0(\bX_i)}{\bp_{\ell}(\bX_i)}\right)\right|^m\right],
\end{aligned}
\end{equation*}
where we used for the last inequality that for every set $\Omega$, each $A\subseteq \Omega$, every function $\theta:\Omega\rightarrow \mathbb{R}$ and every $m\in\mathbb{N}_{\geq 2}$ it holds that
$|\1_{A}\theta|^m=(\1_A)^m|\theta|^m=\1_A|\theta|^m\leq |\theta|^m.$ Combining the previous displays and applying Lemma \ref{L: Inequality to relate to the risk}, we get that
\begin{equation}\label{Eq: Upper bound for Moments pre-Bernstein notation}
\begin{aligned}
&\mathbb{E}_{\mD_n'}[|G_{\ell}(\mathbf{Z}_i)|^m]\\
&\leq 2^{m+1}\mathbb{E}_{\bX_i}\left[\bp_0^\top(\bX_{i})\left|B\wedge\log\left(\frac{\bp_0(\bX_{i})}{\bp_{\ell}(\bX_{i})}\right)\right|^m\right]\\
&\leq  2^{m+1}C_{m,B}\mathbb{E}_{\bX_i}\left[\bp_0^\top(\bX_{i})\left(B\wedge\log\left(\frac{\bp_0(\bX_{i})}{\bp_{\ell}(\bX_{i})}\right)\right)\right] =2^{m+1}C_{m,B}R_B(\bp_0,\bp_{\ell}),
\end{aligned}
\end{equation}
where $C_{m,B}$ is given by
$$C_{m,B}=\max\left\{m!,\frac{B^m}{B-1}\right\}.$$ Since $B\geq 2$, we get that $B/(B-1)\leq 2$ and $C_{m,B}\leq \max\left\{m!,2B^{m-1}\right\}\leq 2m!B^{m-1}.$ Together with \eqref{Eq: Upper bound for Moments pre-Bernstein notation} this yields
\begin{equation*}
\mathbb{E}_{\mD_n'}[|G_{\ell}(\mathbf{Z}_i)|^m]\leq 2^{m+1}C_{m,B}R_B(\bp_0,\bp_{\ell})\leq m!(2B)^{m-2}R_B(\bp_0,\bp_{\ell})32B2^{-1},
\end{equation*}
completing the proof for the moment bound \eqref{Eq: Bernstein Moment Bound}.

Now define $z_{\ell}:=\sqrt{n^{-1}68B\log(\mathcal{N}_n)}\vee \sqrt{\mathbb{E}_{(\bX,\bY)}[\bY^\top(B\wedge\log(\bp_0(\bX)/\bp_{\ell}(\bX)))]}.$ Since $B\geq 2$, Lemma \ref{lem.Hell_KL_bd} guarantees that the truncated Kullback-Leibler risk is always nonnegative, so $z_{\ell}$ is well defined. Define $z^*=z_{\ell^*}$, that is,
$$z^*=\sqrt{\frac{68B\log(\mathcal{N}_n)}{n}}\vee \sqrt{\mathbb{E}_{\mD_N,(\bX,\bY)}\Bigg[\bY^\top\Bigg(B\wedge\log\Bigg(\frac{\bp_0(\bX)}{\bp_{\ell^*}(\bX)}\Bigg)\Bigg)\Bigg|\mD_n\Bigg]},$$ where we also condition on the dataset $\mathcal{D}_n$.
To upper bound $z^*$, we split the truncated empirical risk
\begin{equation*}
\begin{aligned}
&\mathbb{E}_{\mD_N,(\bX,\bY)}\Bigg[\bY^\top\Bigg(B\wedge\log\Bigg(\frac{\bp_0(\bX)}{\bp_{\ell^*}(\bX)}\Bigg)\Bigg)\Bigg|\mD_n\Bigg]\\
&=\mathbb{E}_{\mD_N,(\bX,\bY)}\left[\sum_{k=1}^KY_k\left(\1_{\{p_{k}^0(\bX)\leq\frac{C_n}{n}\}}+\1_{\{p_{k}^0(\bX)\geq\frac{C_n}{n}\}}\right)\Bigg(B\wedge \log\Bigg(\frac{p_{k}^0(\bX)}{p_{\ell^*,k}(\bX)}\Bigg)\Bigg)\Bigg|\mD_n\right].
\end{aligned}
\end{equation*}
Using the property of the $\delta$-cover, Equation \eqref{Eq: Upper-lower bound property} and the fact that $\bY$ is a standard basis vector, it holds that
\begin{equation*}
\begin{aligned}
&\mathbb{E}_{\mD_N,(\bX,\bY)}\left[\sum_{k=1}^KY_k\1_{\{p_{k}^0(\bX)\geq\frac{C_n}{n}\}}\Bigg(B\wedge\log\Bigg(\frac{p_{k}^0(\bX)}{p_{\ell^*,k}(\bX)}\Bigg)\Bigg)\Bigg|\mD_n\right]\\
&\leq \mathbb{E}_{\mD_N,(\bX,\bY)}\left[\sum_{k=1}^KY_k\1_{\{p_{k}^0(\bX)\geq\frac{C_n}{n}\}}\Bigg(B\wedge\log\Bigg(\frac{p_{k}^0(\bX)}{\widehat{p}_k(\bX)}\Bigg)\Bigg)\Bigg|\mD_n\right]+\delta.
\end{aligned}
\end{equation*}
On the other hand, applying Corollary \ref{C: Upper and Lower Bound for case of small p0}, with $(\overline \bX,\overline \bY)=(\bX,\bY)$, $K$ times for $\widehat{\bp}$ and $K$ times with $\widehat{\bp}$ replaced by $\bp_{\ell^*}$, yields
\begin{equation*}
\begin{aligned}
&\mathbb{E}_{\mD_N,(\bX,\bY)}\left[\sum_{k=1}^KY_k\1_{\{p_{k}^0(\bX)\leq\frac{C_n}{n}\}}\Bigg(B\wedge\log\Bigg(\frac{p_{k}^0(\bX)}{p_{\ell^*,k}(\bX)}\Bigg)\Bigg)\Bigg|\mD_n\right]\\
&\leq \mathbb{E}_{\mD_N,(\bX,\bY)}\left[\sum_{k=1}^KY_k\1_{\{p_{k}^0(\bX)\leq\frac{C_n}{n}\}}\Bigg(B\wedge\log\Bigg(\frac{p_{k}^0(\bX)}{\widehat{p}_{k}(\bX)}\Bigg)\Bigg)\Bigg|\mD_n\right]+\frac{C_nK\big(\log\big(\frac{n}{C_n}\big)+B\big)}{n}.
\end{aligned}
\end{equation*}
Define 
$$V:=\sqrt{\mathbb{E}_{\mD_N,(\bX,\bY)}\Bigg[\bY^\top\Bigg(B\wedge\log\Bigg(\frac{\bp_0(\bX)}{\widehat{\bp}(\bX)}\Bigg)\Bigg)\Bigg|\mD_n\Bigg]}.$$
Combining the previous inequalities, we get that
\begin{equation*}
\begin{aligned}
\sqrt{\mathbb{E}_{\mD_N,(\bX,\bY)}\Bigg[\bY^\top(B\wedge\log\Bigg(\frac{\bp_0(\bX)}{\bp_{\ell^*}(\bX)}\Bigg)\Bigg)\Bigg|\mD_n\Bigg]}
&\leq V+\sqrt{\delta+\frac{C_nK\big(\log\big(\frac{n}{C_n}\big)+B\big)}{n}},
\end{aligned}
\end{equation*}
where we also used the elementary inequality $\sqrt{a+b}\leq \sqrt{a}+\sqrt{b}$ for all $a,b\geq 0$. Hence,
\begin{align}
    z^*\leq \sqrt{\frac{68B\log(\mathcal{N}_n)}{n}}+V+\sqrt{\delta+\frac{C_nK\big(\log\big(\frac{n}{C_n}\big)+B\big)}{n}}.
    \label{eq.zstar_bd}
\end{align}
The term $\sqrt{n^{-1}68B\log(\mathcal{N}_n)}$ is chosen such that in \eqref{Eq: E[T] inequality} and \eqref{Eq: E[T^2] inequality} below the equations balance out. Now define
$$T:=\max_{\ell}\left|\sum_{i=1}^n\frac{G_{\ell}(\mathbf{Z}_i)}{z_{\ell}}\right|.$$ The Cauchy-Schwarz inequality gives us that $\mathbb{E}_{\mD_n'}[VT]\leq \sqrt{\mathbb{E}_{\mD_n'}[V^2]\mathbb{E}_{\mD_n'}[T^2]}.$ Noticing that $\mathbb{E}_{\mD_n'}[V^2]=R_B(\bp_0,\widehat\bp)$, we get from \eqref{eq.I-III_decomp}, \eqref{eq.II_bd}, \eqref{eq.III_bd}, \eqref{Eq: Split recombined} and \eqref{eq.zstar_bd} that
\begin{equation}\label{Eq: Inequality Risk and Expectation T}
\begin{aligned}
\left|R_B(\bp_0,\widehat{\bp})-R_{B,n}(\bp_0,\widehat{\bp})\right|&\leq \frac{1}{n}\sqrt{R_B(\bp_0,\widehat{\bp})}\sqrt{\mathbb{E}_{\mD_n'}[T^2]}\\
&+ \frac{1}{n}\Bigg(\sqrt{\frac{68B\log(\mathcal{N}_n)}{n}}+\sqrt{\delta+\frac{C_nK\big(\log\big(\frac{n}{C_n}\big)+B\big)}{n}}\Bigg)\mathbb{E}_{\mD_n'}[T]\\
&+2\delta+\frac{2C_nK\big(\log\big(\frac{n}{C_n}\big)+B\big)}{n}.
\end{aligned}
\end{equation}
The next step in the proof derives bounds on $\mathbb{E}_{\mD_n'}[T]$ and $\mathbb{E}_{\mD_n'}[T^2]$. Using an union bound it holds that 
\begin{equation*}
\begin{aligned}
\mathbb{P}\left(T\geq t\right)
&=\mathbb{P}\left(\max_{\ell}\left|\sum_{i=1}^n\frac{G_{\ell}(\mathbf{Z}_i)}{z_{\ell}}\right|\geq t\right)= \mathbb{P}\left(\bigcup_{\ell=1}^{\mathcal{N}_n}\left(\left|\sum_{i=1}^n\frac{G_{\ell}(\mathbf{Z}_i)}{z_{\ell}}\right|\geq t\right)\right)\\
&\leq \sum_{\ell=1}^{\mathcal{N}_n}\mathbb{P}\left(\left|\sum_{i=1}^nG_{\ell}(\mathbf{Z}_i)\right|\geq tz_{\ell}\right).
\end{aligned}
\end{equation*}

We already showed that $G_{\ell}(\mathbf{Z}_i)$ satisfies the conditions of Bernstein's inequality (Proposition \ref{P: Bernsteins Inequality}) with $v_i=R_B(\bp_0,\bp_{\ell})32B$ and $U=2B$. Bernstein's inequality applied to the last term gives
\begin{equation*}
\begin{aligned}
&\mathbb{P}\left(T\geq t\right)\leq\sum_{\ell=1}^{\mathcal{N}_n}\mathbb{P}\left(\left|\sum_{i=1}^nG_{\ell}(\mathbf{Z}_i)\right|\geq tz_{\ell}\right)\\
&\leq \sum_{\ell=1}^{\mathcal{N}_n}2\exp\left(-\frac{(tz_{\ell})^2}{2nR_B(\bp_0,\bp_{\ell})32B+4Btz_{\ell}}\right)=2\mathcal{N}_n\exp\left(-\frac{t^2}{2n\frac{R_B(\bp_0,\bp_{\ell})32B}{z_{\ell}^2}+4B\frac{t}{z_{\ell}}}\right).
\end{aligned}
\end{equation*}
Since $z_{\ell}\geq \sqrt{R_B(\bp_0,\bp_{\ell})}$ it holds that $z_{\ell}^2\geq R_B(\bp_0,\bp_{\ell})$.
As probabilities are in the interval $[0,1],$ this gives us that
$$\mathbb{P}\left(T\geq t\right)\leq 1\wedge 2\mathcal{N}_n\exp\left(-\frac{t^2}{64Bn+4B\frac{t}{z_{\ell}}}\right).$$

If $t\geq \sqrt{68Bn\log(\mathcal{N}_n)}$, then since $z_{\ell}\geq \sqrt{n^{-1}68B\log(\mathcal{N}_n)}$ it holds that
$$\exp\left(-\frac{t^2}{64Bn+4B\frac{t}{z_{\ell}}}\right)\leq \exp\left(-\frac{t\sqrt{\log(\mathcal{N}_n)}}{\sqrt{68Bn}}\right).$$
For every nonnegative random variable $X$ with finite expectation one has $\mathbb{E}[X]=\int_0^{\infty}\mathbb{P}(X\geq t)\, dt.$ Therefore,
\begin{equation}\label{Eq: E[T] inequality}
\begin{aligned}
\mathbb{E}_{\mD_n'}[T]&\leq \sqrt{68Bn\log(\mathcal{N}_n)}+\int_{\sqrt{68Bn\log(\mathcal{N}_n)}}^{\infty}2\mathcal{N}_n\exp\left(-\frac{t\sqrt{\log(\mathcal{N}_n)}}{\sqrt{68Bn}}\right)\, dt\\
&=\sqrt{68Bn\log(\mathcal{N}_n)}+\sqrt{\frac{272Bn}{\log(\mathcal{N}_n)}}.
\end{aligned}
\end{equation}
Since $T$ is nonnegative, $\mathbb{P}(T^2\geq u)=\mathbb{P}(T\geq \sqrt{u})$, so using the same arguments as before we get that
\begin{equation*}
\begin{aligned}
\mathbb{E}_{\mD_n'}[T^2]&\leq 68Bn\log(\mathcal{N}_n)+\int_{68Bn\log(\mathcal{N}_n)}^{\infty}2\mathcal{N}_n\exp\left(-\sqrt{\frac{u\log(\mathcal{N}_n)}{68Bn}}\right)\, du.
\end{aligned}
\end{equation*}
Substitution $s=\sqrt{u}$ and integration by parts gives us that $\int_a^{\infty}e^{-\sqrt{u}b}\, du=2\int_{\sqrt{a}}^{\infty}se^{-sb}\, ds=2(\sqrt{a}b+1)e^{-\sqrt{a}b}/b^2$ and consequently
\begin{equation}\label{Eq: E[T^2] inequality}
\begin{aligned}
\mathbb{E}_{\mD_n'}[T^2]\leq 68Bn\log(\mathcal{N}_n)+544Bn,
\end{aligned}
\end{equation}
where we also used that $\mathcal{N}_n\geq e$ and thus $(\log(\mathcal{N}_n)+1)/\log(\mathcal{N}_n)\geq 2.$\\

Combining \eqref{Eq: E[T] inequality}, \eqref{Eq: E[T^2] inequality} with \eqref{Eq: Inequality Risk and Expectation T}, using twice that $2xy\leq x^2+y^2$ for all real numbers $x,y$, and using that $\log(\mathcal{N}_n)\geq 1$, we get that
\begin{equation}\label{Eq: Pre-epsilon-aid equation after substitution bounds on E[T] and E[T^2]}
\begin{aligned}
\left|R_B(\bp_0,\widehat{\bp})-R_{B,n}(\bp_0,\widehat{\bp})\right|&\leq \sqrt{R_B(\bp_0,\widehat{\bp})}\sqrt{\frac{68B\log(\mathcal{N}_n)+544B}{n}}\\
&+\frac{102B\log(\mathcal{N}_n)+272B}{n}+ 3\delta+\frac{3C_nK\big(\log\big(\frac{n}{C_n}\big)+B\big)}{n}.
\end{aligned}
\end{equation}

Setting $a=R_B(\bp_0,\widehat{\bp})$, $b=R_{B,n}(\bp_0,\widehat{\bp})$,
$$c=\sqrt{\frac{17B\log(\mathcal{N}_n)+134B}{n}},$$
and
$$d=\frac{102B\log(\mathcal{N}_n)+272B+3C_nK\big(\log\big(\frac{n}{C_n}\big)+B\big)}{n}+ 3\delta,$$ we get from \eqref{Eq: Pre-epsilon-aid equation after substitution bounds on E[T] and E[T^2]} that $|a-b|\leq 2\sqrt{a}c+d$. Since the excess risk is always nonnegative we can apply Proposition \ref{P: Epsilon aid inequality}. This gives us  for any $0<\epsilon\leq 1$
\begin{equation*}
\begin{aligned}
R_B(\bp_0,\widehat{\bp})
&\leq (1+\epsilon)\left(R_{B,n}(\bp_0,\widehat{\bp})+3\delta+\frac{102B\log(\mathcal{N}_n)+272B+3C_nK\big(\log\big(\frac{n}{C_n}\big)+B\big)}{n}\right)\\
&+\frac{(1+\epsilon)^2}{\epsilon}\cdot\frac{17B\log(\mathcal{N}_n)+136B}{n}.
\end{aligned}
\end{equation*}
Proposition \ref{P: Upper Bound Risk Delta} gives
$R_{B,n}(\bp_0,\widehat{\bp})\leq \inf_{\bp\in\mF}R(\bp_0,\bp)+\Delta_n(\bp_0,\widehat{\bp}).$
Substituting this in the previous equation and observing that $(1+\epsilon)/\epsilon\geq 2$, $1/\epsilon\geq 1$ and $0<1-\epsilon\leq 1$ for $\epsilon\in(0,1]$ yields the assertion of the theorem.
\end{proof}

\bibliography{RefLibMultiClassOverleafVersion}
\bibliographystyle{acm}

\appendix
\section*{Appendix}
\section{Basic network properties and operations}

In this section we state elementary properties of network classes and introduce small networks that are capable of approximating multiplication operations based on similar results in \cite{NonParametricRegressionReLU}. 

\subsubsection{Embedding properties of neural network function classes}
\label{sec.embed_props}

This section extends the results in \cite{NonParametricRegressionReLU} to arbitrary output activation function.

\textit{Enlarging:}
 Let $\bfm$ and $\bfm'$ be two width-vectors of the same length and let $s,s'>0$. If $\bfm\leq \bfm'$ component-wise, $m_{L+1}=m'_{L+1}$ and $s\leq s'$, then 
 \begin{equation}\label{Eq: Enlarging}
 \mF_{\bpsi}(L,\bfm,s)\subseteq \mF_{\bpsi}(L,\bfm',s').\\
 \end{equation}

This rule allows us to simplify the neural network architectures. For example we can simplify a network class by embedding it in a class for which all hidden layers have the same width.

\textit{Composition:}
Let $\bf\in \mF_{\id}(L,\bfm,s_1)$ and let $\bg$ be a network in  $\mF_{\bpsi}(L',\bfm',s_2)$, with $m_{L+1}=m_0'$. For a vector $\mathbf{v}\in\mathbb{R}^{m_{L+1}}$, define the composed network $\bg\circ \sigma_{\mathbf{v}}(\bf)$. Then 
\begin{equation}\label{Eq: Composition}
\bg\circ \sigma_{\mathbf{v}}(\bf) \in\mF_{\bpsi}\big(L+L'+1,(m_0,\cdots,m_{L+1},m_1',\cdots,m_{L'+1}'),s_1+s_2+|\mathbf{v}|_0\big).
 \end{equation}

The following rule allows us to synchronize the depths of neural networks.

\textit{Depth synchronization:}
For any positive integer $a$, 
\begin{equation}\label{Eq: Depth synchronization}
\mF_{\bpsi}(L,\bfm,s)\subset \mF_{\bpsi}(L+a,(\underbrace{m_0,\cdots,m_0}_{a \text{ times}},\bfm),s+am_0).
\end{equation}
To identify simple neural network architectures, we can combine the depth-synchronization and enlarging properties. When there exist $c\geq m_0$ and $b>0,$ such that $s=cL+b,$ and $L^*$ is an upper bound on $L$, combining the previous two properties yields
\begin{equation*}
    \mF_{\bpsi}(L,\bfm,s)\subset \mF_{\bpsi}(L^*,\bfm',cL+m_0(L^*-L)+b)\subset\mF_{\bpsi}(L^*,\bfm',cL^*+b),
\end{equation*}
where the width vector $\bfm'$ has length $L^*+2$ and can be chosen as $(m_0,m',m',\cdots,m',m_{L+1})$ with $m'$ equal to the largest coefficient of $\bfm.$

\textit{Parallelization:}
 Let $\bfm$, $\bfm'$ be two width vectors such that $m_0=m_0'$ and let $\bf\in\mF_{\id}(L,\bfm)$ and $\bg\in \mF_{\id}(L,\bfm')$. Define the parallelized network $\mathbf{h}$ as $\mathbf{h}:=(\bf,\bg)$. Then
\begin{equation}\label{Eq: Parallelization}
\mathbf{h}\in \mF_{\id}(L,(m_0,m_1+m_1',\cdots,m_{L+1}+m_{L+1}').
\end{equation}

\begin{propositie}[Removal of inactive nodes]\label{P: Inactive Node Removal}
 It holds that
 \begin{equation*}
     \mF_{\bpsi}(L,\bfm,s)=\mF_{\bpsi}(L,(m_0,m_1\wedge s,\cdots, m_L\wedge s,m_{L+1}),s).
 \end{equation*}
\end{propositie}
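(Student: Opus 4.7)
The claimed equality reduces to two inclusions. The inclusion
$$\mF_{\bpsi}(L,(m_0,m_1\wedge s,\cdots, m_L\wedge s,m_{L+1}),s)\subseteq \mF_{\bpsi}(L,\bfm,s)$$
is immediate from the enlarging property \eqref{Eq: Enlarging}, since $m_i\wedge s\leq m_i$ for every $i\in\{1,\ldots,L\}$, the input and output widths $m_0$ and $m_{L+1}$ match, and the sparsity budget $s$ is the same. All the content therefore lies in the reverse inclusion, which I would prove by pruning nodes that cannot contribute to the output.

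Fix $\bf\in\mF_{\bpsi}(L,\bfm,s)$ with weight matrices $W_0,\ldots,W_L$ and shift vectors $\bv_1,\ldots,\bv_L$. For each hidden layer index $i\in\{1,\ldots,L\}$ and each neuron $j\in\{1,\ldots,m_i\}$, the output of neuron $j$ in layer $i$ enters the computation of layer $i+1$ only through the $j$-th column of $W_i$. If that column vanishes identically, the activation of neuron $j$ is multiplied by zero in every subsequent linear transformation and therefore affects the computed function nowhere. Hence, deleting such a neuron together with the $j$-th row of $W_{i-1}$, the $j$-th entry of $\bv_i$, and the (zero) $j$-th column of $W_i$ leaves the function $\bf$ unchanged. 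Let $J_i\subseteq\{1,\ldots,m_i\}$ denote the set of neurons in layer $i$ whose outgoing column in $W_i$ is \emph{not} identically zero. Each index in $J_i$ witnesses at least one nonzero entry of $W_i$, so $|J_i|\leq \|W_i\|_0\leq s$, and trivially $|J_i|\leq m_i$, giving $|J_i|\leq m_i\wedge s$.

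After performing this pruning in every hidden layer, the width of layer $i$ is $|J_i|\leq m_i\wedge s$. Since the procedure only deletes entries from the $W_j$ and $\bv_j$, both the bound $\max_{j}(\|W_j\|_\infty\vee\|\bv_j\|_\infty)\leq 1$ and the sparsity bound $\sum_j(\|W_j\|_0+\|\bv_j\|_0)\leq s$ are preserved. The resulting network computes exactly the same function $\bf$, uses output activation $\bpsi$, and lies in $\mF_{\bpsi}(L,(m_0,m_1\wedge s,\ldots,m_L\wedge s,m_{L+1}),s)$, establishing the reverse inclusion.

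The only point that needs careful justification is the functional-equivalence claim, and it is entirely elementary: once a column of $W_i$ vanishes, the corresponding post-activation value in layer $i$ is multiplied by zero when forming the pre-activation of every neuron in layer $i+1$, so whatever shifts or incoming weights the pruned neuron carries are irrelevant to the network output. No substantive obstacle arises, and the argument is essentially bookkeeping on the sparsity pattern of the $W_i$.
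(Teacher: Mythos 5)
Your proof is correct and rests on the same pruning-and-counting idea that the paper relies on by deferring to the argument in \cite{NonParametricRegressionReLU} (the paper itself only notes that the output activation plays no role there). Your variant removes nodes with identically zero outgoing weights and bounds the kept nodes by $\|W_i\|_0\leq s$, a mirror image of the usual count via nonzero incoming parameters, and the remaining steps (functional equivalence, preservation of the parameter bound and sparsity, plus the implicit use of the enlarging property to pad the pruned widths up to $m_i\wedge s$) are all in order.
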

For this property, the output function plays no role and the proof in \cite{NonParametricRegressionReLU} carries over.

The following equation gives the number of parameters in a fully connected network in $\mF_{\bpsi}(L,\bfm)$:
\begin{equation}\label{Eq: Full Parameter Count}
\sum_{j=0}^L(m_j+1)m_{j+1}-m_{L+1}.
\end{equation}
This will be used further on as an upper bound on the number of active parameters in sub-networks.

\subsubsection{Scaling numbers}
We constraint all neural network parameters to be bounded in absolute value by one. To build neural networks with large output values we construct small rescaling networks.

\begin{propositie}\label{P: Large Scale}
For any real number $C$ there exists a network \emph{Scale}$_{C}\in\mF_{\id}(\ceil{\log_2(|C|)}+(\ceil{\log_2(|C|)}-1),(1,2,1,2,1,\cdots,1,2,1),4\ceil{\log_2(|C|)})$ such that \emph{Scale}$_{C}(x)=C(x)_+$.
\end{propositie}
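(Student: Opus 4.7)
The plan is to build $\text{Scale}_C$ explicitly as a ``doubling chain'' of narrow layers, with the rescaling factor absorbed into the output weight. Set $k:=\ceil{\log_2|C|}$. The basic building block takes a single nonnegative value $y$ and returns $2y$: it lifts $y$ to $(y,y)^\top\in\mathbb{R}^2$ using the $2\times 1$ weight matrix $(1,1)^\top$ (with zero shift), applies the componentwise ReLU (which is the identity since $y\geq 0$), and then collapses back to a single node via the row vector $(1,1)$, yielding $y+y=2y$. The very first block accepts the possibly negative input $x$, and the intermediate ReLU after the lift turns $(x,x)^\top$ into $(x_+,x_+)^\top$; this is the unique point at which positivity is enforced.

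Chaining $k-1$ such blocks produces the value $2^{k-1}x_+$ at a width-$1$ node. One more width-$2$ hidden layer is appended using the lift $(1,1)^\top$, which gives $(2^{k-1}x_+,\,2^{k-1}x_+)^\top$, and the final identity-activation output layer uses the $1\times 2$ weight row $(C/2^k,\,C/2^k)$ to return
\begin{equation*}
    (C/2^k+C/2^k)\cdot 2^{k-1}x_+ \;=\; Cx_+.
\end{equation*}
Because $k=\ceil{\log_2|C|}$ we have $|C|/2^k\leq 1$, so both final weights satisfy the maximum-entry bound, as do all the intermediate $1$'s; every shift vector is the zero vector and therefore trivially bounded.

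A quick bookkeeping step verifies the claimed architecture. The hidden layers alternate widths $2,1,2,1,\ldots,2$ for a total of $k$ width-$2$ and $k-1$ width-$1$ layers, giving depth $L=2k-1=\ceil{\log_2|C|}+(\ceil{\log_2|C|}-1)$ and full width vector $(1,2,1,2,1,\ldots,1,2,1)$. Each of the $k$ lift matrices and each of the $k$ collapse matrices (including the output row) contains exactly two nonzero entries, and all shifts are zero, giving sparsity $2k+2k=4k=4\ceil{\log_2|C|}$.

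No step is a real obstacle: one only needs to observe that after the first ReLU all subsequent activations remain nonnegative (so later ReLUs act as the identity and each block really does double), and that $|C|/2^k\leq 1$, which is exactly the defining property of $k$. The sole boundary case worth noting is $|C|\leq 1$, where $k\leq 0$ makes the formula degenerate; in that regime a one-hidden-layer width-$1$ network with $W_0=1$ and output weight $C$ already realizes $\text{Scale}_C(x)=Cx_+$, so the statement may be assumed under $|C|>1$ without loss of generality.
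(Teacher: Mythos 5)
Your construction is correct and is essentially the paper's own proof: compose $\ceil{\log_2|C|}$ width-two doubling blocks $x\mapsto 2(x)_+$ with zero shifts and replace the final collapse row $(1,1)$ by $(C2^{-\ceil{\log_2|C|}},C2^{-\ceil{\log_2|C|}})$, with the same depth, width and sparsity bookkeeping. Your extra remark about the degenerate regime $|C|\leq 1$ is a minor refinement the paper leaves implicit, but it does not change the argument.
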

\begin{proof}
Set
$$W_0=\begin{pmatrix}
1\\1
\end{pmatrix},\ \ \ \mathbf{v}_1=\begin{pmatrix}
0 \\ 0
\end{pmatrix},\ \text{ and } \ W_1=(1,1).$$ The network $W_1\sigma_{\mathbf{v}_1}W_0x$ computes $x\mapsto 2(x)_+$. This network has exactly one hidden layer, one input node, one output node and two nodes in the hidden layer. It uses four nonzero-parameters. Composing $\ceil{\log_2(|C|)}$ of these networks, using the composition rule \eqref{Eq: Composition}, where we take the output layer of one network to be the input layer of the next one with shift vector zero, yields a network in the right network class computing $x\mapsto 2^{\ceil{\log_2(|C|)}}(x)_+$.
Replacing the last weight matrix by $(C2^{-\ceil{\log_2(|C|)}},C2^{-\ceil{\log_2(|C|)}})$ yields the result.
\end{proof}

\subsubsection{Negative numbers}
For negative input, the ReLU activation without shift returns zero. As a result, many network constructions output zero for negative input. Using that $x=\sigma(x)-\sigma(-x),$ the next result shows existence of a neural network function that extends the original network function as an even (or odd) function to negative input values.

\begin{propositie}\label{P: Negative numbers.}
 Assume $f\in\mF_{\id}(L,(m_0,m_{1},\cdots,m_{L},1),s)$ and $f(\bx)=0$ whenever $x_j\leq 0$ for some index $j\in\{1,\cdots,m_0\}$. Then there exist neural networks $$f^{\pm}\in\mF_{\id}(L,(m_0,2m_{2},\cdots,2m_{L},1),2s),$$ such that $x_j \mapsto f^+(\bx)$ is an even function, $x_j \mapsto f^-(\bx)$ is an odd function and $f^{\pm}(\bx)=f(\bx)$ for all $\bx$ with $x_j\geq 0$.
\end{propositie}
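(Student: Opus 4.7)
The plan is to exploit the reflection $R_j:\bx\mapsto \bx-2x_j\be_j$ that negates the $j$-th coordinate, and to set $f^{\pm}(\bx):=f(\bx)\pm f(R_j\bx)$. The hypothesis $f(\bx)=0$ on $\{x_j\leq 0\}$ immediately gives $f(R_j\bx)=0$ on $\{x_j\geq 0\}$, so on the positive half-space both $f^+$ and $f^-$ coincide with $f$. Under the substitution $x_j\mapsto -x_j$ the two summands are swapped, which makes $f^+$ even and $f^-$ odd in $x_j$. This disposes of the functional requirements and reduces the proposition to a network-realization problem.

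For the realization, the key observation is that $\bx\mapsto f(R_j\bx)$ can be computed by the same architecture as $f$, after a single modification: replace the first weight matrix $W_0$ by $W_0R_j$, i.e.\ negate the $j$-th column of $W_0$. This encodes composition with $R_j$ as a linear reparametrization at the input layer. Coordinate-wise negation preserves both the entry-wise bound $\|W_0\|_\infty\leq 1$ and the number of nonzero entries, so the reflected network lies in $\mF_{\id}(L,(m_0,m_1,\ldots,m_L,1),s)$ just like $f$.

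It remains to assemble $f$ and its reflection into a single ReLU network. I parallelize the two copies via the rule in \eqref{Eq: Parallelization}, which doubles each hidden layer width to $2m_k$; the first weight matrix stacks $W_0$ on top of $W_0R_j$, the intermediate weight matrices become block-diagonal copies of $W_1,\ldots,W_{L-1}$, and all shifts are duplicated. Since each parallel branch outputs a scalar, the final combination $u_1\pm u_2$ is linear and can be absorbed into the last weight matrix, taken as $[W_L,\pm W_L]\in\mathbb{R}^{1\times 2m_L}$, so no extra depth is required. The resulting network has depth $L$, width profile $(m_0,2m_1,\ldots,2m_L,1)$, at most $2s$ nonzero parameters, and all weights and shifts bounded by one in absolute value.

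The main thing to check is this last absorption step: one might worry that combining the two parallel branches forces an additional layer and would violate the depth constraint. This does not happen because the outputs of the two branches before the final linear map live in the last hidden layer, so the scalar combination $u_1\pm u_2$ merges with, rather than follows, the last weight matrix of the parallelized network. Everything else follows directly from the embedding rules recalled in Section~\ref{sec.embed_props}, so after the decomposition $f^{\pm}=f\pm f\circ R_j$ is chosen the remaining argument is routine bookkeeping on widths, sparsity and parameter bounds.
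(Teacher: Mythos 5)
Your proposal is correct and follows essentially the same construction as the paper: parallelize $f$ with a copy in which the $j$-th column of $W_0$ is negated, multiply the second branch's output by $\pm 1$ (absorbed into the final weight matrix), and read off the even/odd symmetry and the agreement with $f$ on $\{x_j\geq 0\}$ from $f^{\pm}=f\pm f\circ R_j$. Your write-up just makes explicit the bookkeeping (sparsity $2s$, parameter bounds, no extra depth) that the paper leaves implicit.
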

\begin{proof}
Take two neural networks in the class $\mF_{\id}(L,(m_0,m_{1},\cdots,m_{L},1),s)$ in parallel: The original network $f$ to deal with the positive part and the second network to deal with the negative part. This second network can be build from the first network $f$ by multiplying the $j$-th column vector of $W_0$ by $-1$ and multiplying the output of the network by $\pm1.$ The parallelized network computes then $f^{\pm}.$
\end{proof}

The extension to more than one output is straightforward. Following the same construction as in the previous section, all that has to be done is multiplying the corresponding rows of the weight matrix in the output layer of the neural network by either $-1$, $1$ of $0$ depending on how we wish to extend the function. More precisely, if we have $m_0^-\leq m_0$ input coefficients $x_j$ for which $x_j\leq 0$ implies $f(\bx)=0$, we can find neural networks $$\bf^{\pm}\in\mF_{\id}(L,(m_0,2^{m_0^-}m_{2},\cdots,2^{m_0^-}m_{L},m_{L+1}),2^{m_0^-}s),$$ such that $x_j\mapsto \bf^+(\bx)$ is an even function and $x_j\mapsto \bf^-(\bx)$ is an odd function for all of the $m_0^-$ indices $j.$ This network can be constructed using $2^{m_0^-}$ parallel networks.

\section{Neural networks approximating the logarithm}\label{A:Proof of LogNetwork}
Theorem \ref{T:MSTLogNetworkResult} assumes $M\geq 2$. We use this throughout the proof without further mentioning.
\subsection{Taylor approximation}
Set
\begin{align*}
    T_c^\kappa(x)=\log(c)+ \sum_{\gamma=0}^\kappa x^\gamma \sum_{\alpha=\gamma\vee 1}^{\kappa}\binom{\alpha}{\gamma}\frac{c^{-\gamma}(-1)^{1-\gamma}}{\alpha}=\sum_{\gamma=0}^{\kappa}x^{\gamma}c_{\gamma}.
\end{align*}

\begin{propositie}\label{P: Taylor Bounds For the Logarithm}
For all $\kappa=0,1,\dots$ and every $c>0,$ we have that
\begin{align*}
    \big| \log(x) - T_c^\kappa(x)
    \big|
    \leq \frac{1}{\kappa+1}\left|\frac{x-c}{x\wedge c}\right|^{\kappa+1},
\end{align*}
where the sum in $T_c^{\kappa}$ is defined as zero if $\kappa=0.$ Moreover, if $0<x\leq c,$ we also have that $T_c^\kappa(x)\leq \log(c).$
\end{propositie}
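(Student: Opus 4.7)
The plan is to recognize that $T_c^\kappa$ is exactly the order-$\kappa$ Taylor polynomial of $\log$ about the point $c$, apply Taylor's theorem with the Lagrange form of the remainder, and obtain the auxiliary monotonicity bound from the sign pattern of the coefficients.

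First I would verify that the given expression agrees with the Taylor polynomial. Since $\log^{(\alpha)}(c) = (-1)^{\alpha-1}(\alpha-1)!/c^\alpha$ for $\alpha\geq 1$, the degree-$\kappa$ Taylor polynomial of $\log$ about $c$ is
\[
P_c^\kappa(x) = \log(c) + \sum_{\alpha=1}^{\kappa} \frac{(-1)^{\alpha-1}}{\alpha c^\alpha}(x-c)^\alpha.
\]
Expanding $(x-c)^\alpha = \sum_{\gamma=0}^\alpha \binom{\alpha}{\gamma}x^\gamma(-c)^{\alpha-\gamma}$ with the binomial theorem, swapping the order of summation so that $\gamma$ runs from $0$ to $\kappa$ and $\alpha$ from $\gamma\vee 1$ to $\kappa$, and simplifying the sign via $(-1)^{\alpha-1}(-1)^{\alpha-\gamma} = (-1)^{1-\gamma}$, I recover exactly $T_c^\kappa(x)$.

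Next I would invoke Taylor's theorem with the Lagrange remainder: there exists $\xi$ between $x$ and $c$ with
\[
\log(x) - T_c^\kappa(x) = \frac{\log^{(\kappa+1)}(\xi)}{(\kappa+1)!}(x-c)^{\kappa+1} = \frac{(-1)^\kappa}{\kappa+1}\cdot\frac{(x-c)^{\kappa+1}}{\xi^{\kappa+1}}.
\]
Taking absolute values and using $\xi\geq x\wedge c$ (since $\xi$ lies between $x$ and $c$, both of which are positive) delivers the first claim.

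For the second claim, I would set $u=(c-x)/c\in[0,1)$ so that $x=c(1-u)$ with $u\geq 0$, and use the compact Taylor form $T_c^\kappa(x)=\log(c)+\sum_{\alpha=1}^{\kappa}\frac{(-1)^{\alpha-1}}{\alpha}\bigl(\frac{x-c}{c}\bigr)^\alpha$. Substituting $(x-c)/c=-u$ yields $(-1)^{\alpha-1}(-u)^\alpha = -u^\alpha$, hence
\[
T_c^\kappa(x) = \log(c) - \sum_{\alpha=1}^{\kappa}\frac{u^\alpha}{\alpha} \leq \log(c),
\]
since each summand is nonnegative. This is a very short proof with no real obstacle; the only care needed is bookkeeping the signs in the binomial rearrangement of step one.
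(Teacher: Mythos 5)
Your proposal is correct and follows essentially the same route as the paper: identify $T_c^\kappa$ with the degree-$\kappa$ Taylor polynomial of the logarithm at $c$, bound the remainder by the Lagrange/mean-value form using $\xi\geq x\wedge c$, and deduce $T_c^\kappa(x)\leq\log(c)$ for $0<x\leq c$ from the sign of the terms $(x-c)^\alpha(-1)^{\alpha+1}$. The only cosmetic difference is that you verify the identification by binomial expansion and swapping sums, whereas the paper matches coefficients by differentiating at $x=0$; both are routine.
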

\begin{proof}
We claim that $T_c^{\kappa}$ is equal to the $k$-th order Taylor approximation of the logarithm. First we show that from this claim the statements of the proposition follow. The $\alpha$-th derivative of the logarithm is $\log^{(\alpha)}(x) = (\alpha-1)!(-1)^{\alpha+1} x^{-\alpha}.$ Thus, the $k$-th order Taylor approximation of the logarithm around the point $c$ is given by
\begin{equation}\label{Eq:TaylorLogApprox}
    \log(c)+\sum_{\alpha=1}^{\kappa}\frac{(x-c)^{\alpha}(-1)^{\alpha+1}}{\alpha c^{\alpha}}.
\end{equation}
By the mean value theorem, the remainder is bounded by
\begin{equation*}
\frac{1}{\kappa+1}\left|\frac{x-c}{s}\right|^{\kappa+1},
\end{equation*}
for some $s$ between $x$ and $c.$
Now since the function $1/s$ on $(0,\infty)$ is decreasing, its maximum is obtained at the left boundary, that is, $x\wedge c$, which yields the first claim of the proposition.
Now we show that $T_c^{\kappa}\leq \log(c)$ whenever $0<x\leq c$. When $\kappa=0,$ the sum in \eqref{Eq:TaylorLogApprox} disappears and the result follows immediately. When $\kappa\geq 1$, notice that $(x-c)$ is always negative. Hence the product $(x-c)^{\alpha}(-1)^{\alpha+1}$ is negative for all $\alpha$, so together with the case $\kappa=0$ this yields $T_c^{\kappa}(x)\leq \log(c),$ for $0<x\leq c.$

It remains to prove that $T_c^{\kappa}$ is the $k$-th order Taylor approximation of the logarithm around the point $c$.
Writing the Taylor approximation as a linear combination of monomials gives
\begin{equation*}
\log(c)+\sum_{\alpha=1}^{\kappa}\frac{(x-c)^{\alpha}(-1)^{\alpha+1}}{\alpha c^{\alpha}}=\sum_{\gamma=0}^{\kappa}x^{\gamma}\bar{c}_{\gamma},
\end{equation*}
for suitable coefficients $\bar{c}_{\gamma}$. Using this expression we can obtain the coefficients $\bar{c}_{\gamma}$ for $\gamma\geq 1$ by evaluating the derivatives at $x=0:$
\begin{equation*}
\left.\frac{d^{\gamma}}{dx^{\gamma}} \log(c)+\sum_{\alpha=1}^{\kappa}\frac{(x-c)^{\alpha}(-1)^{\alpha+1}}{\alpha c^{\alpha}}\right|_{x=0}=\gamma!\bar{c}_{\gamma}.
\end{equation*}
This gives us that
\begin{equation*}
\bar{c}_{\gamma}=\sum_{\alpha=\gamma}^{\kappa}\frac{(\alpha-1)!(-c)^{\alpha-\gamma}(-1)^{\alpha+1}}{\gamma!(\alpha-\gamma)!c^{\alpha}}=\sum_{\alpha=\gamma}^{\kappa}\binom{\alpha}{\gamma}\frac{c^{-\gamma}(-1)^{1-\gamma}}{\alpha}.
\end{equation*}
For $\bar{c}_0$ we get
\begin{equation*}
    \bar{c}_0=\log(c)+\sum_{\alpha=1}^{\kappa}\frac{(\alpha-1)!(-c)^{\alpha}(-1)^{\alpha+1}}{(\alpha)!c^{\alpha}}=\log(c)+\sum_{\alpha=1}^{\kappa}\frac{(-1)}{\alpha}.
\end{equation*}
Hence $\sum_{\gamma}^{\kappa}x^{\gamma}\bar{c}_{\gamma}=\sum_{\gamma}^{\kappa}x^{\gamma}c_{\gamma}=T_c^{\kappa}(x),$ proving the claim.
\end{proof}

Next we establish a bound on the sum of the coefficients $c_{\gamma}$ of $T_c^{\kappa}$ in the case $c\leq e$. For $\gamma\geq 1$, we bound $c_{\gamma}$ by 
\begin{equation*}
|c_{\gamma}|\leq  \sum_{\alpha=\gamma}^{\kappa}\binom{\alpha}{\gamma}\frac{(1\wedge c)^{-\gamma}}{\alpha}\leq (1\wedge c)^{-\kappa}\sum_{\alpha=\gamma}^{\kappa}\binom{\alpha}{\gamma}.
\end{equation*}
Since also
\begin{equation*}
    |c_0|\leq |\log(c)|+\sum_{\alpha=1}^{\kappa}\frac{1}{\alpha}\leq |\log(c)|+\sum_{\alpha=1}^{\kappa}\binom{\alpha}{0},
\end{equation*}
this shows that the sum of the coefficients is bounded by
\begin{equation*}
    \sum_{\gamma=0}^{\kappa}|c_{\gamma}|\leq |\log(c)|+(1\wedge c)^{-\kappa}\sum_{\gamma=0}^{\kappa}\sum_{\alpha=1\wedge\gamma}^{\kappa}\binom{\alpha}{\gamma} \leq |\log(c)|+ (1\wedge c)^{-\kappa}\sum_{\gamma=0}^{\kappa}\sum_{\alpha=\gamma}^{\kappa}\binom{\alpha}{\gamma}.
\end{equation*}
The double sum can be rewritten as the sum of all the entries in the rows $0,\cdots,\kappa$ of Pascal's triangle. From the binomial theorem we know that summing over the $\alpha$-th row of Pascal's triangle gives $2^{\alpha}$. Combined with $|\log(c)|\leq (1\wedge c)^{-1}$ for $0<c\leq e$, this gives
\begin{equation}\label{Eq: Bound On Sum of cGamma}
\sum_{\gamma=0}^{\kappa}|c_{\gamma}|\leq (\kappa+1)2^{\kappa+1}(1\wedge c)^{-(\kappa\vee 1)}\leq (\kappa+1)2^{\kappa+1}(1\wedge c)^{-\kappa-1}, \text{ for all } 0<c\leq e.
\end{equation}
Applying the softmax function to an approximation $g$ of the logarithm involves the exponential function and requires a bound for $|e^{g(x)}-x|$ with $x> 0.$ By the mean value theorem $|e^{g(x)}-e^{\log(x)}|=e^{s}|g(x)-\log(x)|$ for a suitable $s$ between $\log(x)$ and $g(x).$ The next proposition provides such a bound. 

\begin{propositie}\label{P: Bound of Exp times Logerror}
For all $\lambda\geq 1$, define
\begin{equation*}
\mathcal{D}_{\lambda}:=\left[\frac{ \lambda^{\ceil{\beta}}}{2^{\ceil{\beta}^2}\ceil{\beta}^{\floor{\beta}}M},\frac{(\lambda+1)^{\ceil{\beta}}}{2^{\ceil{\beta}^2}\ceil{\beta}^{\floor{\beta}}M}\right].
\end{equation*}
If $[a,b]\subset\mathcal{D}_{\lambda}$, then it holds for any $x\in[a,b]$ and any $\omega\leq \log\left(\frac{  (\lambda+1)^{\ceil{\beta}}}{2^{\ceil{\beta}^2}\ceil{\beta}^{\floor{\beta}}M}\right),$ that
\begin{equation*}
e^{\omega}|T_{b}^{\floor{\beta}}(x)-\log(x)|\leq \frac{1}{M}.
\end{equation*}
\end{propositie}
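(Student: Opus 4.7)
The plan is to apply the Taylor remainder bound (Proposition~\ref{P: Taylor Bounds For the Logarithm}) with center $c=b$ and order $\kappa=\floor{\beta}$, and then estimate the result using the explicit structure of $\mathcal{D}_\lambda$. For $x\in[a,b]\subset\mathcal{D}_\lambda$ we have $x\leq b$, so $x\wedge b=x$, and the proposition yields
\begin{equation*}
    |T_b^{\floor{\beta}}(x)-\log(x)|\leq \frac{1}{\floor{\beta}+1}\Big(\frac{b-x}{x}\Big)^{\floor{\beta}+1}.
\end{equation*}
Since $x\mapsto (b-x)/x=b/x-1$ is decreasing, the right-hand side is maximized at $x=a$, so I would bound the Taylor error by $\frac{1}{\floor{\beta}+1}((b-a)/a)^{\floor{\beta}+1}$.

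Writing $N:=2^{\ceil{\beta}^2}\ceil{\beta}^{\floor{\beta}}M$, the inclusion $[a,b]\subset\mathcal{D}_\lambda$ gives $(b-a)/a\leq ((\lambda+1)^{\ceil{\beta}}-\lambda^{\ceil{\beta}})/\lambda^{\ceil{\beta}}=(1+1/\lambda)^{\ceil{\beta}}-1$. Expanding binomially and using $\lambda\geq 1$ so that $\lambda^{-j}\leq \lambda^{-1}$ for $j\geq 1$ produces the clean estimate
\begin{equation*}
    (1+1/\lambda)^{\ceil{\beta}}-1=\sum_{j=1}^{\ceil{\beta}}\binom{\ceil{\beta}}{j}\lambda^{-j}\leq \frac{2^{\ceil{\beta}}-1}{\lambda}.
\end{equation*}
Combining with $e^\omega\leq (\lambda+1)^{\ceil{\beta}}/N\leq 2^{\ceil{\beta}}\lambda^{\ceil{\beta}}/N$ (using $(\lambda+1)/\lambda\leq 2$) and the elementary fact that $\lambda^{\ceil{\beta}-\floor{\beta}-1}\leq 1$ for $\lambda\geq 1$ (since $\ceil{\beta}\leq\floor{\beta}+1$) gives
\begin{equation*}
    e^\omega|T_b^{\floor{\beta}}(x)-\log(x)|\leq \frac{2^{\ceil{\beta}}(2^{\ceil{\beta}}-1)^{\floor{\beta}+1}}{2^{\ceil{\beta}^2}\ceil{\beta}^{\floor{\beta}}(\floor{\beta}+1)M}.
\end{equation*}

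The proof then reduces to the numerical inequality $(2^{\ceil{\beta}}-1)^{\floor{\beta}+1}\leq 2^{\ceil{\beta}^2-\ceil{\beta}}\ceil{\beta}^{\floor{\beta}}(\floor{\beta}+1)$. For non-integer $\beta$, where $\floor{\beta}+1=\ceil{\beta}$, this rearranges to $2-2^{1-\ceil{\beta}}\leq \ceil{\beta}$, which holds for every $\ceil{\beta}\geq 1$. For integer $\beta\geq 3$, the crude bound $(2^\beta-1)^{\beta+1}\leq 2^{\beta(\beta+1)}$ combined with $4^\beta\leq \beta^\beta(\beta+1)$ finishes things, and the two remaining integer cases $\beta=1$ ($1\leq 2$) and $\beta=2$ ($27\leq 48$) can be checked by inspection. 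The main obstacle is precisely this last inequality: the crude upper bound $2^{\ceil{\beta}}-1<2^{\ceil{\beta}}$ costs a factor $2^{\ceil{\beta}}$ that is enough to break the estimate when $\ceil{\beta}\in\{1,2\}$, so a case split (integer versus non-integer, and then small integer values handled directly) appears to be unavoidable.
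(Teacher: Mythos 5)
Your proposal is correct and follows essentially the same route as the paper: both apply the Taylor remainder bound of Proposition \ref{P: Taylor Bounds For the Logarithm} with center $b$, bound $b-x$ by the width of $\mathcal{D}_{\lambda}$, lower bound $x$ by the left endpoint $\lambda^{\ceil{\beta}}/(2^{\ceil{\beta}^2}\ceil{\beta}^{\floor{\beta}}M)$, and control $e^{\omega}$ by the right endpoint. The only difference is bookkeeping: the paper uses $(\lambda+1)^{\ceil{\beta}}-\lambda^{\ceil{\beta}}\leq \ceil{\beta}(\lambda+1)^{\ceil{\beta}-1}$, after which everything collapses to $2^{-\ceil{\beta}^2}M^{-1}((\lambda+1)/\lambda)^{\ceil{\beta}^2}\leq M^{-1}$ with no case distinction, whereas your binomial bound $(2^{\ceil{\beta}}-1)/\lambda$ leads to the final numerical inequality and case split (which is harmless but partly redundant, since with the paper's convention that $\floor{x}$ is the largest integer strictly less than $x$ one has $\floor{\beta}+1=\ceil{\beta}$ for every $\beta>0$, so your separate integer-$\beta$ branch never occurs).
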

\begin{proof}
First notice that on $(0,\infty)$ the logarithm is strictly increasing and is infinitely times continuously differentiable. For real numbers $a,b$ and a positive integer $j,$ $a^j-b^j=(a-b)\sum_{i=1}^ja^{j-i}b^{i-1}.$ Applied to $a=\lambda+1$ and $b=\lambda,$ this gives $(\lambda+1)^j-\lambda^j\leq j(\lambda+1)^{j-1}$ and thus for $x\in[a,b]\subseteq\mathcal{D}_{\lambda},$ we get that 
\begin{equation*}
|x-b|\leq b-a \leq \frac{ (\lambda+1)^{\ceil{\beta}}-\lambda^{\ceil{\beta}}}{2^{\ceil{\beta}^2}\ceil{\beta}^{\floor{\beta}}M}
\leq b \frac{\ceil{\beta}}{\lambda+1}.
\end{equation*}

Substituting this in the bound from Proposition \ref{P: Taylor Bounds For the Logarithm} and using that $x\geq a$ gives
\begin{equation*}
|T_{b}^{\floor{\beta}}(x)-\log(x)|\leq \frac{1}{\ceil{\beta}}\left|\frac{\ceil{\beta} (\lambda+1)^{\floor{\beta}}}{a2^{\ceil{\beta}^2}\ceil{\beta}^{\floor{\beta}}M}\right|^{\ceil{\beta}}.
\end{equation*}
Since $a\in\mathcal{D}_{\lambda}$,
\begin{equation*}
\begin{aligned}
&|T_{b}^{\floor{\beta}}(x)-\log(x)|\leq \frac{1}{\ceil{\beta}}\left|\frac{\ceil{\beta} (\lambda+1)^{\floor{\beta}}}{2^{\ceil{\beta}^2}\ceil{\beta}^{\floor{\beta}}M}\cdot\frac{2^{\ceil{\beta}^2}\ceil{\beta}^{\floor{\beta}}M}{\lambda^{\ceil{\beta}}}\right|^{\ceil{\beta}}=\ceil{\beta}^{\floor{\beta}}\left|\frac{(\lambda+1)^{\floor{\beta})}}{\lambda^{\ceil{\beta}}}\right|^{\ceil{\beta}}.
\end{aligned}
\end{equation*}

Multiplying both sides with an exponential, noticing that the exponential function is strictly increasing, and applying the upper bound on $\omega$ given in the statement of the proposition yields
\begin{equation*}
\begin{aligned}
e^{\omega}|T_{b}^{\floor{\beta}}(x)-\log(x)|\leq \frac{  (\lambda+1)^{\ceil{\beta}}\ceil{\beta}^{\floor{\beta}}}{2^{\ceil{\beta}^2}\ceil{\beta}^{\floor{\beta}}M}\left|\frac{(\lambda+1)^{\floor{\beta})}}{\lambda^{\ceil{\beta}}}\right|^{\ceil{\beta}}=\frac{1}{2^{\ceil{\beta}^2}M}\left(\frac{\lambda+1}{\lambda}\right)^{\ceil{\beta}^2}.
\end{aligned}
\end{equation*}
Since $(\lambda+1)\lambda^{-1}$ is positive and decreasing for $\lambda\geq 1$, we can upper bound the last display by $1/M$.
\end{proof}

\subsection{Partition of unity}
So far we have bounded the approximation error on subintervals. As we work with ReLU functions, indicator functions of intervals are impractical to use, because they are discontinuous. Instead we create a partition of unity consisting of continuous piecewise linear functions for an interval that contains the interval $[M^{-1},1-M^{-1}]$ .

Define $R$ as the smallest integer sucht that
\begin{equation*}
\frac{(\frac{R}{2}+2^{\ceil{\beta}}\ceil{\beta}^{\floor{\beta}/\ceil{\beta}}-\frac{3}{4})^{\ceil{\beta}}}{2^{\ceil{\beta}^2}\ceil{\beta}^{\floor{\beta}}M}\geq 1-\frac{1}{M}.
\end{equation*}
Rewriting this equation yields
\begin{equation*}
R=\ceil{2^{\ceil{\beta}+1}\ceil{\beta}^{\floor{\beta}/\ceil{\beta}}\left(M-1\right)^{\frac{1}{\ceil{\beta}}}-2\left(2^{\ceil{\beta}}\ceil{\beta}^{\floor{\beta}/\ceil{\beta}}-\frac{3}{4}\right)}\leq2^{\ceil{\beta}+1}\ceil{\beta}^{\floor{\beta}/\ceil{\beta}}M^{\frac{1}{\ceil{\beta}}}.
\end{equation*}

Now we define sequences $(a_r)_{r=1,\cdots,R}$ and $(b_r)_{r=1,\cdots,R-1}$ as follows
\begin{equation*}
a_r:=\frac{(2^{\ceil{\beta}}\ceil{\beta}^{\floor{\beta}/\ceil{\beta}}+\frac{r}{2}-\frac{3}{4})^{\ceil{\beta}}}{2^{\ceil{\beta}^2}\ceil{\beta}^{\floor{\beta}}M},
\end{equation*}
\begin{equation*}
b_r:=\frac{(2^{\ceil{\beta}}\ceil{\beta}^{\floor{\beta}/\ceil{\beta}}+\frac{r}{2}-\frac{1}{2})^{\ceil{\beta}}}{2^{\ceil{\beta}^2}\ceil{\beta}^{\floor{\beta}}M},
\end{equation*}
and for ease of notation define $b_0=a_1$ and $b_R=a_R.$
Notice that $[M^{-1},1-M^{-1}]\subseteq[a_1,a_R]\subseteq[M^{-1},1+M^{-1}]$.

Next we define a family of functions  $(F_r)_{r=2,3,\cdots,R}$ and $(H_r)_{r=1,2,\cdots,R}$ on the interval $[a_1,a_R]$.
For $r=2,\cdots, R$ define the function $F_r$ to be zero outside of the interval $[a_{r-1},a_r]$ and to be a linear interpolation between the value one at the point $b_{r-1}$ and the value zero at the boundaries of this interval. In the same way define for $r=2,\cdots, R-1$ the function $H_r$, but  with support on the interval $[b_{r-1},b_r]$ and with interpolation point $a_r.$ Define $H_1$ to be the linear interpolation between the value one at the point $a_1$ and the value zero at $b_1$ and let it be zero outside this interval. Finally define $H_R$ as the linear interpolation between the value one at the point $b_R$ and the value zero at $b_{R-1}$ and set it to zero outside of this interval.

By construction it holds that 
\begin{equation*}
\sum_{r=2}^RF_r(x)+\sum_{r=1}^RH_r(x)=1, \ \ \text{ for all }x\in[a_1,a_R].
\end{equation*}

Figure \ref{fig:FrHrConstruction} gives the first few functions $F_r$ and $H_r$ in the case that $\beta\in(1,2]$.
\begin{figure}
    \centering
    \begin{tikzpicture}[decoration={zigzag,segment length=3}]

	\begin{scope}[yscale =3,xscale=5]
	
	\coordinate[InterPoint] (a1) at (0.884,0);
	\coordinate[InterPoint] (a2) at (1.32575,0);
	\coordinate[InterPoint] (a3) at (1.80675,0);
	\coordinate[InterPoint] (a4) at (2.327,0);
	\coordinate[InterPoint] (a5) at (2.886,0);
 	\coordinate[InterPoint] (a6) at (3.48425,0);

	\coordinate[InterPoint] (A1) at (0.884,1);
	\coordinate[InterPoint] (A2) at (1.32575,1);
	\coordinate[InterPoint] (A3) at (1.80675,1);
	\coordinate[InterPoint] (A4) at (2.327,1);
	\coordinate[InterPoint] (A5) at (2.866,1);
 	\coordinate[InterPoint] (A6) at (3.48425,1);

	\coordinate[InterPoint] (b1) at (1.1,0);
	\coordinate[InterPoint] (b2) at (1.5615,0);
	\coordinate[InterPoint] (b3) at (2.062,0);
	\coordinate[InterPoint] (b4) at (2.6015,0);
	\coordinate[InterPoint] (b5) at (3.18025,0);
 	\coordinate[InterPoint] (b6) at (3.798,0);

	\coordinate[InterPoint] (B1) at (1.1,1);
	\coordinate[InterPoint] (B2) at (1.5615,1);
	\coordinate[InterPoint] (B3) at (2.062,1);
	\coordinate[InterPoint] (B4) at (2.6015,1);
	\coordinate[InterPoint] (B5) at (3.18025,1);
 	\coordinate[InterPoint] (B6) at (3.798,1);

	\draw[help lines] (0.75,0) grid (3.24,1.1);
	
	\draw[YAxis] (0.75,0) -- (0.75,1.1);
	\draw[decorate] (0.75,0) -- (0.85,0);
	\draw[XAxis] (0.85,0) -- (3.45,0);
	
	\end{scope}

	\draw[Fr] (a1) -- (B1);
	\draw[Fr] (B1) -- (a2);
	\draw[Fr] (a2) -- (B2);
	\draw[Fr] (B2) -- (a3);
	\draw[Fr] (a3) -- (B3);
	\draw[Fr] (B3) -- (a4);
	\draw[Fr] (a4) -- (B4);
	\draw[Fr] (B4) -- (a5);
	\draw[Fr,dashed] (a5) -- (B5);
	\draw[Fr,dashed] (B5) -- ($(B5)!0.18!(a6)$) coordinate[Fr,label=right:$F_r(x)$] (LF) ;

	\draw[Hr] (A1) -- (b1);
	\draw[Hr] (b1) -- (A2);
	\draw[Hr] (A2) -- (b2);
	\draw[Hr] (b2) -- (A3);
	\draw[Hr] (A3) -- (b3);
	\draw[Hr] (b3) -- (A4);
	\draw[Hr] (A4) -- (b4);
	\draw[Hr] (b4) -- (A5);
	\draw[Hr,dashed] (A5) -- (b5);
	\draw[Hr,dashed] (b5) -- ($(b5)!0.18!(A6)$) coordinate[Hr,label=right:$H_r(x)$] (LH);

	\foreach \apoint in {a1,a2,a3,a4,a5}
		\fill[black, opacity=.5] (\apoint) circle (2pt);
	\foreach  \bpoint in {b1,b2,b3,b4,b5}
		\fill[black, opacity=.5] (\bpoint) \Square{2pt};

	\coordinate[label=left:$1$] (y1) at (3.75,3);
	\coordinate[label=left:$\frac{1}{2}$] (y2) at (3.75,1.5);
	\coordinate[label=left:$0$] (y3) at (3.75,0);
	
	\coordinate[label=below:$\frac{1}{M}$] (N1) at (b1);
	
	\coordinate[label=below:$x$] (Lx) at (17,0);
\end{tikzpicture}
    \caption{The first few functions $F_r(x)$ and $H_r(x)$ when $\beta\in(1,2]$. The points $a_r$ are marked with circles \protect\tikz{\fill[black, opacity=.5] circle (2pt);}, while the points $b_r$ are denoted by squares \protect\tikz{\fill[black, opacity=.5] \Square
{2pt};}. }
    \label{fig:FrHrConstruction}
\end{figure}

We can construct a ReLU network that exactly represents the functions $F_r$ and $H_r$. This construction is a modification of the construction of continuous piecewise linear functions as used in \cite{ErrorBoundsYarotsky}. This modification assures that the parameters are bounded by one.
\begin{propositie}\label{P: ReLU Construction Unity}
 For each function $F_r$ and $H_r$ their exists a network $U_{F_r},U_{H_r}\in \mF_{\id}(3((1+\ceil{\beta})^2+\lfloor\log_2(M\ceil{\beta}^{\floor{\beta}})\rfloor),(1,3,3,\cdots,3,1),8((1+\ceil{\beta})^2+\log_2(M\ceil{\beta}^{\floor{\beta}})))$ such that
 $F_r(x)=U_{F_r}(x)$ and $H_r(x)=U_{H_r}(x)$ for all $x\in[a_1,a_R].$
\end{propositie}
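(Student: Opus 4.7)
The plan is to write each tent-shaped function $F_r$ and $H_r$ as a short linear combination of shifted ReLUs and to realise the (potentially very large) combining coefficients through the bounded-parameter scaling network $\mathrm{Scale}_C$ of Proposition~\ref{P: Large Scale}, so that the whole construction uses only weights and shifts bounded by one. Concretely, for $r\in\{2,\dots,R\}$, a direct computation (checking value and slope at the breakpoints $a_{r-1},b_{r-1},a_r$ and the zero slope outside $[a_{r-1},a_r]$) yields the identity
\begin{equation*}
F_r(x)=\frac{\sigma(x-a_{r-1})}{b_{r-1}-a_{r-1}}-\Big(\frac{1}{b_{r-1}-a_{r-1}}+\frac{1}{a_r-b_{r-1}}\Big)\sigma(x-b_{r-1})+\frac{\sigma(x-a_r)}{a_r-b_{r-1}}
\end{equation*}
on all of $\mathbb{R}$. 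An analogous three-ReLU formula handles $H_r$ for $r\in\{2,\dots,R-1\}$, while the boundary functions $H_1$, $H_R$ admit even shorter expressions on $[a_1,a_R]$ (for instance $H_R(x)=\sigma(x-b_{R-1})/(b_R-b_{R-1})$ and $H_1(x)=\sigma(1-\sigma(x-a_1)/(b_1-a_1))$).

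Next I would bound the magnitudes of the coefficients. Setting $C_0:=2^{\ceil{\beta}}\ceil{\beta}^{\floor{\beta}/\ceil{\beta}}$ and $N:=2^{\ceil{\beta}^2}\ceil{\beta}^{\floor{\beta}}M$, the mean value theorem applied to $t\mapsto t^{\ceil{\beta}}$ gives $b_{r-1}-a_{r-1}$, $a_r-b_{r-1}\ge \ceil{\beta}(C_0-3/4)^{\ceil{\beta}-1}/(4N)$, and a short calculation shows that every coefficient in the decomposition has absolute value at most a common constant $C$ with $\ceil{\log_2 C}\le (1+\ceil{\beta})^2+\floor{\log_2(M\ceil{\beta}^{\floor{\beta}})}$.

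For the network assembly, a first hidden layer of width $3$, unit weights, and shifts $a_{r-1},b_{r-1},a_r\in[0,1]$ produces the three shifted ReLUs above. Writing the coefficients as $\alpha_1,\alpha_2,\alpha_3$ and the shifts as $\xi_1,\xi_2,\xi_3$, set $g(x):=\sum_{j=1}^3(\alpha_j/C)\sigma(x-\xi_j)$. Since $|\alpha_j/C|\le 1$, the scalar $g(x)$ is computed by a single further hidden layer with bounded weights and zero shift; moreover, because the decomposition equals $F_r(x)\in[0,1]$ pointwise, one has $g(x)\ge 0$, so the following ReLU acts as the identity and the second hidden layer outputs exactly $g(x)$. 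Feeding $g(x)$ into $\mathrm{Scale}_C$ then produces $C\cdot g(x)=F_r(x)$. This yields a network of depth $2\ceil{\log_2 C}+1$ and sparsity $\le 9+4\ceil{\log_2 C}$, whose widths are a mix of $1$, $2$, and $3$. Applying the depth-synchronization~\eqref{Eq: Depth synchronization} and enlarging~\eqref{Eq: Enlarging} rules then embeds this into the class stated in the proposition (uniform hidden width $3$ and depth $3((1+\ceil{\beta})^2+\floor{\log_2(M\ceil{\beta}^{\floor{\beta}})})$), and the coefficient bound $\ceil{\log_2 C}\le (1+\ceil{\beta})^2+\floor{\log_2(M\ceil{\beta}^{\floor{\beta}})}$ ensures that the stated sparsity $8((1+\ceil{\beta})^2+\log_2(M\ceil{\beta}^{\floor{\beta}}))$ is respected. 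The same template, with one or two ReLUs in the initial layer, takes care of the $H_r$.

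The main obstacle is enforcing the unit bound on every weight and shift while the natural ReLU slopes $1/(b_{r-1}-a_{r-1})$ and $1/(a_r-b_{r-1})$ can be of order $M\ceil{\beta}^{\floor{\beta}}$ or larger. This is precisely resolved by factoring those slopes out as a single common normalisation $C$ and delegating the multiplication to the logarithmic-depth, unit-parameter network $\mathrm{Scale}_C$ acting on the pointwise nonnegative scalar $g(x)$.
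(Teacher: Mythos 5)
Your proposal is correct and takes essentially the same route as the paper's proof: represent each $F_r$ and $H_r$ exactly by (at most) three shifted ReLUs with shifts at the breakpoints, divide the large slopes by a common constant $C$ bounded by a power of two times $M\ceil{\beta}^{\floor{\beta}}$, restore the scale by composing with the $\mathrm{Scale}_C$ network of Proposition \ref{P: Large Scale}, and embed the result in the stated class via depth synchronization and enlarging. The only differences are cosmetic: you lower bound the grid spacing via the mean value theorem where the paper uses $(x+y)^{\ceil{\beta}}-x^{\ceil{\beta}}\geq y^{\ceil{\beta}}$, and your displayed three-ReLU formula for $F_r$ has the correct shifts and sign (the paper's display contains a typo).
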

\begin{proof}
The functions $F_r$ and $H_r$, $r=2,\cdots,R$, are piecewise linear functions, consisting of four pieces each. This means that these function can be perfectly represented as a linear combination of three ReLU functions. The interpolation points provide the values of the shift vectors. Writing this out for $F_r$ gives
\begin{equation*}
    F_r(x)=\frac{\sigma(x-a_{r-1})}{b_{r-1}-a_{r-1}}+ \left(\frac{1}{b_{r-1}-a_{r-1}}+\frac{1}{a_r-b_{r-1}}\right)\sigma(x-a_{r-1})+ \frac{\sigma(x-a_{r-1})}{a_r-b_{r-1}}.
\end{equation*}  For $H_r$, $r=2,\cdots, R$ this can be done in a similar way. For $H_1$ and $H_R$ we actually only need one ReLU function. The networks weights in this construction are greater than one. The difference between two consecutive points $a_r$ and $b_r$ can be lower bounded by using that for $x,y\geq0$: $(x+y)^{\ceil{\beta}}-x^{\ceil{\beta}}\geq  y^{\ceil{\beta}}.$ Because of
\begin{equation*}
    \frac{(2^{\ceil{\beta}}\ceil{\beta}^{\floor{\beta}/\ceil{\beta}})^{\ceil{\beta}}}{2^{\ceil{\beta}^2}\ceil{\beta}^{\floor{\beta}}M}-\frac{(2^{\ceil{\beta}}\ceil{\beta}^{\floor{\beta}/\ceil{\beta}}-\frac{1}{4})^{\ceil{\beta}}}{2^{\ceil{\beta}^2}\ceil{\beta}^{\floor{\beta}}M}\geq \frac{(\frac{1}{4})^{\ceil{\beta}}}{2^{\ceil{\beta}^2}\ceil{\beta}^{\floor{\beta}}M},
\end{equation*}
 we can upper bound all the network weights by
\begin{equation}\label{Eq: Upper Bound PartitionUnityNetworksCoefficients}
    2^{1+2\ceil{\beta}+\ceil{\beta}^2}\ceil{\beta}^{\floor{\beta}}M,
\end{equation}
which is the inverse of the lower bound on the smallest difference between two consecutive points multiplied by two.
Dividing the multiplicative constants by this bound and combining \eqref{Eq: Composition} the resulting network with the Scale$_{C}(x)$ network from Proposition \ref{P: Large Scale} with $C$ equal to \eqref{Eq: Upper Bound PartitionUnityNetworksCoefficients} yields a network with the required output and parameters bounded by one. The network class is simplified by using the depth-synchronization \eqref{Eq: Depth synchronization} followed by the enlarging property of neural networks \eqref{Eq: Enlarging}.
\end{proof}

The previous partition yields an approximation $T^{\beta}:[a_1,a_R]\rightarrow \mathbb{R}$ of the logarithm on the entire interval $[a_1,a_R]$ via
\begin{equation}\label{Eq: Complete Log approximation equation}
T^{\beta}(x):=\sum_{r=2}^RF_r(x)T^{\floor{\beta}}_{a_r}(x)+\sum_{r=1}^RH_r(x)T^{\floor{\beta}}_{b_r}(x).
\end{equation}
This function depends on $M$ through the sequence of points $a_r$ and $b_r$.

We can now derive the same type of error bound as in Lemma \ref{P: Bound of Exp times Logerror} for all $x\in[0,1]$. For this, define the projection $\pi:[0,1]\rightarrow [a_1,a_R]$, that maps $x\in[0,1]$ to itself, if it is already in the interval $[a_1,a_R],$ and to the closest boundary point otherwise.

\begin{Lemma}\label{L: Error bound for the Complete Log Approximation}
For all $x\in [0,1],$ we have $|e^{T^{\beta}(\pi(x))}-x|\leq M^{-1}.$
\end{Lemma}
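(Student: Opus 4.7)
The plan is to partition $[0,1]$ into the interior $[a_1, a_R]$, on which $\pi(x) = x$, and the two boundary regions $[0, a_1)$ and $(a_R, 1]$, on which $\pi$ is constant.

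For the interior, I locate the unique $r \geq 2$ with $x \in [a_{r-1}, a_r]$. The support structure of $(F_r)$ and $(H_r)$ shows that at most two of them are nonzero at $x$: always $F_r$, together with $H_{r-1}$ (if $x \leq b_{r-1}$) or $H_r$ (if $x \geq b_{r-1}$), and their values sum to one. Denoting the second function $H_\star$ and its Taylor center $c \in \{b_{r-1}, b_r\}$, rewriting $\log x = (F_r(x) + H_\star(x)) \log x$ gives
\[
T^\beta(x) - \log x = F_r(x)\bigl[T_{a_r}^{\floor{\beta}}(x) - \log x\bigr] + H_\star(x)\bigl[T_c^{\floor{\beta}}(x) - \log x\bigr].
\]
The mean value theorem then gives $|e^{T^\beta(x)} - x| = e^\xi |T^\beta(x) - \log x|$ for some $\xi$ between $T^\beta(x)$ and $\log x$. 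Since $x$ lies to the left of each Taylor center, the second part of Proposition \ref{P: Taylor Bounds For the Logarithm} bounds each approximant by the log of its center, so $T^\beta(x) \leq \log(\max(a_r, c)) \leq \log b_r$; trivially $\log x \leq \log b_r$, and therefore $e^\xi \leq b_r$. Applying Proposition \ref{P: Bound of Exp times Logerror} to each Taylor term with $\mathcal{D}_\lambda$ chosen so that $(\lambda+1)^{\ceil{\beta}}/(2^{\ceil{\beta}^2}\ceil{\beta}^{\floor{\beta}}M) \geq b_r$ yields $|T_{a_r}^{\floor{\beta}}(x) - \log x|$ and $|T_c^{\floor{\beta}}(x) - \log x|$ each at most $1/(Mb_r)$. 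The convex combination inherits this bound, so $|e^{T^\beta(x)} - x| \leq b_r \cdot 1/(Mb_r) = 1/M$.

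For the left boundary $x \in [0, a_1)$, $\pi(x) = a_1$ and only $H_1(a_1) = 1$ survives, so $T^\beta(a_1) = T_{b_1}^{\floor{\beta}}(a_1)$. A direct calculation gives $b_1 = 1/M$ exactly, and hence by Proposition \ref{P: Taylor Bounds For the Logarithm}, $T^\beta(a_1) \leq \log b_1 = -\log M$, i.e.\ $e^{T^\beta(a_1)} \leq 1/M$. Since $x \leq a_1 < b_1 = 1/M$ and $e^{T^\beta(a_1)} > 0$, both values lie in $[0, 1/M]$ and $|e^{T^\beta(a_1)} - x| \leq 1/M$. The right boundary $x \in (a_R, 1]$ is even simpler: with the convention $b_R = a_R$, only $H_R(a_R) = 1$ is nonzero at $a_R$, so $T^\beta(a_R) = T_{a_R}^{\floor{\beta}}(a_R) = \log a_R$ exactly, hence $|e^{T^\beta(a_R)} - x| = |a_R - x| \leq 1 - a_R \leq 1/M$ by the defining property of $R$.

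The main obstacle is the bookkeeping in the interior case, specifically verifying for each of the three relevant support intervals ($[a_{r-1}, a_r]$ for $F_r$, $[b_{r-2}, b_{r-1}]$ for $H_{r-1}$, $[b_{r-1}, b_r]$ for $H_r$) that they fit inside some $\mathcal{D}_\lambda$ whose upper endpoint is at least $b_r$. This relies on the observation that these intervals have ``base length'' $1/2$ while $\mathcal{D}_\lambda$ has unit base, leaving enough slack to push $\lambda + 1$ up to the required value. Once this alignment is made explicit, the factor $b_r$ from the mean value theorem cancels the $1/b_r$ coming out of Proposition \ref{P: Bound of Exp times Logerror}, leaving exactly the target bound $1/M$.
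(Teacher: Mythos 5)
Your proposal is correct and follows essentially the same route as the paper: the same split into the interior $[a_1,a_R]$ and the two boundary pieces, the mean value theorem combined with the partition of unity, the bound $T_c^{\floor{\beta}}(x)\leq\log(c)$ from Proposition \ref{P: Taylor Bounds For the Logarithm}, and Proposition \ref{P: Bound of Exp times Logerror} applied on suitably chosen intervals $\mathcal{D}_\lambda$, with the boundary cases handled by the explicit values $b_1=M^{-1}$ and $a_R\geq 1-M^{-1}$. The only difference is bookkeeping: you bound $e^{\xi}\leq b_r$ and apply Proposition \ref{P: Bound of Exp times Logerror} with the maximal admissible $\omega$ (possibly a different $\lambda$ per Taylor term, which your half-length-versus-unit-length slack argument indeed justifies), whereas the paper takes $\omega=\xi$ with a single $\lambda$ covering both intervals -- a cosmetic variation, not a different proof.
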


\begin{proof}
First consider $x\in (a_1,a_R]$. By construction there exists a unique $r^*\in\{2,3,\cdots,R\}$ and a unique $\bar{r}\in\{1,\cdots,R\}$ such that $x\in (a_{r^*-1},a_{r^*}],$ and $x\in(b_{\bar{r}-1},b_{\bar{r}}]$. By the mean value theorem and \eqref{Eq: Complete Log approximation equation},
\begin{equation*}
\begin{aligned}
&\left|e^{T^{\beta}(x)}-x\right|\leq e^{\xi}\left|T^{\beta}(x)-\log(x)\right|\\
&=e^{\xi}\left|\sum_{r=2}^RF_r(x)T^{\floor{\beta}}_{a_r}(x)+\sum_{r=1}^RH_r(x)T^{\floor{\beta}}_{b_r}(x)-\log(x)(F_{r^*}(x)+H_{\bar{r}}(x))\right|\\
&\leq F_{r^*}(x)e^{\xi}\left|T_{a_{r^*}}^{\floor{\beta}}(x)-\log(x)\right|+H_{\bar{r}}(x)e^{\xi}\left|T_{b_{\bar{r}}}^{\floor{\beta}}(x)-\log(x)\right|,
\end{aligned}
\end{equation*}
where $\xi$ is some number between $T^{\beta}(x)$ and $\log(x)$.
We now want to apply Proposition \ref{P: Bound of Exp times Logerror}. For this we need to find a $\lambda\geq 1$ such that $[a_{r^*-1},a_{r^*}]\cup [b_{\bar{r}-1},b_{\bar{r}}]\in\mathcal{D}_{\lambda}$ and $\xi\leq \max_{y\in\mathcal{D}_{\lambda}}\log(y),$ with $\mathcal{D}_{\lambda}$ as defined by that proposition.
Because of our choice of the sequences of points $a_r$ and $b_r$,
\begin{equation*}
  \lambda:=\max\Big\{\frac{r^*}2+2^{\ceil{\beta}}\ceil{\beta}^{\floor{\beta}/\ceil{\beta}}-\frac{3}{4},\frac{\bar{r}}2+2^{\ceil{\beta}}\ceil{\beta}^{\floor{\beta}/\ceil{\beta}}-\frac{1}{2}\Big\}-1
\end{equation*}
 satisfies $\lambda\geq1$, since $r^*\geq 2$ and $\bar{r}\geq 1$. Furthermore this choice of $\lambda$
 guarantees that $[a_{r^*-1},a_{r^*}]\cup [b_{\bar{r}-1},b_{\bar{r}}]\subseteq\mathcal{D}_{\lambda}$.
For the bound on $\xi$, notice that $x\in [a_{r^*-1},a_{r^*}]\cup [b_{\bar{r}-1},b_{\bar{r}}]$ and that $T^{\beta}(x)=F_{r^*}(x)T_{a_{r^*}}^{\floor{\beta}}(x)+H_{\bar{r}}(x)T_{b_{\bar{r}}}^{\floor{\beta}}(x)$. Combined with the second statement of Proposition \ref{P: Taylor Bounds For the Logarithm}, that is $T_c^{\kappa}\leq \log(c)$ for $0<c\leq x$, and together with $F_{r^*}(x)+H_{\bar{r}}(x)=1$, this yields $\xi\leq \max\{\log(a_{r^*}),\log(b_{\bar{r}})\}.$
Thus we can apply Proposition \ref{P: Bound of Exp times Logerror} and obtain
\begin{equation*}
F_{r^*}(x)e^{\xi}\left|T_{a_{r^*}}^{\floor{\beta}}(x)-\log(x)\right|+H_{\bar{r}}(x)e^{\xi}\left|T_{b_{\bar{r}}}^{\floor{\beta}}(x)-\log(x)\right|\leq F_{r^*}(x)\frac{1}{M}+H_{\bar{r}}(x)\frac{1}{M}=\frac{1}{M},
\end{equation*}
completing the proof for $x\in[a_1,a_R].$

When $x\in[0,a_1]$, notice that $0<a_1<M^{-1}$ and $T^{\beta}(\pi(x))=T_{b_1}^{\floor{\beta}}(a_1)$. Hence by Proposition \ref{P: Taylor Bounds For the Logarithm} together with $b_1=M^{-1},$ we get that
$T^{\beta}(\pi(x))\leq \log(M^{-1})$ proving that both $x$ and $e^{T^{\beta}(\pi(x))}$ are in $[0,M^{-1}]$. Thus the conclusion also holds for $x\in[0,a_1]$.

For $a_R\geq 1,$ the proof follows from $[0,1]\subseteq([0,a_1]\cup [a_1,a_R])$. Thus it remains to study $a_R<1.$ Consider $x\in[a_R,1]$.
Using that $1-M^{-1}\leq a_R<1$ and that $T^{\beta}(\pi(x))=T_{b_R}^{\floor{\beta}}(a_R)=T_{a_R}^{\floor{\beta}}(a_R)$ yields
$T^{\beta}(\pi(x))=\log(a_R)$. This gives us that both $x$ and $e^{T^{\beta}(\pi(x))}$ are in $[a_R,1]\subset[1-M^{-1},1]$, which immediately yields the required bound.
\end{proof}

\subsubsection{Network Construction}
The following result shows how to approximate multiplications with deep ReLU networks. This is required later to construct neural networks mimicking the Taylor-approximation $T^{\beta}$ considered in the previous section.
\begin{Lemma}[Lemma A.3. of \cite{NonParametricRegressionReLU}]\label{L: Combined Multiplication}
For every $\eta\in\mathbb{N}_{\geq 1}$ and $D\in\mathbb{N}_{\geq 1}$, there exists a network Mult$_{\eta}^D\in\mF_{\id}((\eta+5)\ceil{\log_2(D)},(D,6D,6D,\cdots,6D,1))$, such that Mult$_{\eta}^D\in[0,1]$ and
\begin{equation*}
\left|\text{Mult}_{\eta}^D(x_1,\cdots,x_D)-\prod_{i=1}^Dx_i\right|\leq 3^D2^{-\eta},\ \ \text{ for all }(x_1,\cdots,x_D)\in[0,1]^D.
\end{equation*}
Moreover Mult$_{\eta}^D(x)=0$ if one of the coefficients of $\bx$ is zero.
\end{Lemma}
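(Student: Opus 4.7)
The plan is to build $\text{Mult}_\eta^D$ by a balanced binary tree whose internal nodes each compute an approximate pairwise multiplication. The key building block is therefore a small ReLU subnetwork $M_2(x,y)$ that approximates $xy$ on $[0,1]^2$. This, in turn, reduces to approximating the squaring map $x\mapsto x^2$ on $[0,1]$, via the identity
\begin{equation*}
    xy = \tfrac{1}{2}\bigl((x+y)^2 - x^2 - y^2\bigr) \qquad \text{or equivalently}\qquad xy=\tfrac14\bigl((x+y)^2-(x-y)^2\bigr).
\end{equation*}
So the first step is to exhibit a ReLU network $\operatorname{Sq}_\eta$ with $\|\operatorname{Sq}_\eta - (\cdot)^2\|_{L^\infty[0,1]}\leq 2^{-2(\eta+5)}$ of depth roughly $\eta+5$ and constant width.

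For $\operatorname{Sq}_\eta$ I would use the standard Telgarsky/Yarotsky sawtooth construction: let $T(x)=2\sigma(x)-4\sigma(x-\tfrac12)$ be the tent map, and let $T^{(k)}$ denote its $k$-fold self-composition. The piecewise-linear interpolant of $x^2$ at the dyadic points $j/2^k$ can be written as $x-\sum_{k=1}^\eta 2^{-2k}T^{(k)}(x)$, approximating $x^2$ uniformly with error at most $2^{-2\eta}$. Each $T^{(k)}$ is realized by $k$ stacked ReLU layers of width $2$, so $\operatorname{Sq}_\eta$ lies in $\mathcal F_{\id}$ with depth $\eta+5$ and width $O(1)$, and crucially $\operatorname{Sq}_\eta(0)=0$. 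Composing with the polarization identity and adjusting additive constants then yields an $M_2\in\mathcal F_{\id}$ of the same depth and width $O(1)$ satisfying $|M_2(x,y)-xy|\leq 2^{-\eta}$, $M_2(x,y)\in[0,1]$ (after clipping $M_2$ between $\sigma(\cdot)$ and $1-\sigma(1-\cdot)$), and $M_2(x,y)=0$ whenever $x=0$ or $y=0$.

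Next I would assemble $\text{Mult}_\eta^D$ as a binary tree with the $D$ input coordinates at the leaves and an $M_2$-block at every internal node; pad with the constant $1$ where $D$ is not a power of two. The tree has $\ceil{\log_2 D}$ levels, each contributing depth $\eta+5$, giving total depth $(\eta+5)\ceil{\log_2 D}$ as required. At level $\ell$ the tree carries at most $\lceil D/2^\ell\rceil$ active wires, and each $M_2$-subnetwork uses $O(1)$ hidden nodes, so a width budget of $6D$ per layer comfortably accommodates everything after applying the parallelization and depth-synchronization rules of Section \ref{sec.embed_props}.

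The main obstacle is the error analysis, which I would handle by induction over the tree levels. Writing $\widehat p_v$ for the network's value at a node $v$ and $p_v$ for the true product of the leaves below $v$, observe that for siblings $v_1,v_2$ with parent $v$,
\begin{equation*}
|\widehat p_v - p_v|\leq |M_2(\widehat p_{v_1},\widehat p_{v_2})-\widehat p_{v_1}\widehat p_{v_2}|+|\widehat p_{v_1}\widehat p_{v_2}-p_{v_1}p_{v_2}|\leq 2^{-\eta}+|\widehat p_{v_1}-p_{v_1}|+|\widehat p_{v_2}-p_{v_2}|,
\end{equation*}
where we used $\widehat p_{v_i},p_{v_i}\in[0,1]$. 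Unrolling this recursion over the $\ceil{\log_2 D}$ levels gives a bound of the form $(2D-1)2^{-\eta}\leq 3^D 2^{-\eta}$, which is the announced estimate. The exact-zero property propagates automatically: if some leaf is zero, the corresponding $M_2$-block at its parent outputs zero, and this zero then annihilates every ancestor by the same property of $M_2$. Finally, one checks that all weights in $\operatorname{Sq}_\eta$, $M_2$, and the tree can be taken in $[-1,1]$, possibly by composing with the rescaling network of Proposition \ref{P: Large Scale} where multiplicative constants exceed one.
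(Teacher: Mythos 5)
The paper does not prove this lemma at all: it is imported verbatim as Lemma A.3 of \cite{NonParametricRegressionReLU}, so there is no internal proof to compare against. Your reconstruction follows exactly the route used in that cited source (Yarotsky-style sawtooth approximation of the square, polarization to get a pairwise multiplication unit, a balanced binary tree of such units, and an error recursion giving $(2D-1)2^{-\eta}\leq 3^D2^{-\eta}$), so in substance you have rebuilt the right argument, and the error induction and the propagation of exact zeros through the tree are handled correctly.

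Two points in your sketch need more care than you give them. First, the exact-zero property of the pairwise unit does not follow merely from $\operatorname{Sq}_\eta(0)=0$: since your $\operatorname{Sq}_\eta$ is only valid on $[0,1]$ while $x+y$ ranges over $[0,2]$, the version $\tfrac12\big((x+y)^2-x^2-y^2\big)$ forces you to rescale the first argument, and then $2\operatorname{Sq}_\eta\big(\tfrac{x}{2}\big)-\tfrac12\operatorname{Sq}_\eta(x)$ is not exactly zero at $y=0$, only approximately so. You must either use the identity $xy=\operatorname{Sq}\big(\tfrac{x+y}{2}\big)-\operatorname{Sq}\big(\tfrac{|x-y|}{2}\big)$ with $|x-y|=\sigma(x-y)+\sigma(y-x)$, where the two arguments coincide when $x=0$ or $y=0$ and the cancellation is exact, or build the squaring unit directly on $[0,2]$; without such a choice the claim ``$M_2(x,y)=0$ whenever $x=0$ or $y=0$'', on which the zero-propagation through the tree relies, fails. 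Second, the constraint that all parameters lie in $[-1,1]$ cannot be fixed by inserting the scaling network of Proposition \ref{P: Large Scale} at will, since that costs extra depth and would break the stated budget $(\eta+5)\ceil{\log_2 D}$; the standard remedy (and the reason for the width $6D$) is to realize the factors $2$ and $4$ in the tent map by duplicating nodes across the width with unit weights. Both issues are repairable bookkeeping, but as written they are the places where your construction would not literally satisfy the lemma.
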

\begin{Remark}\label{R: Parameter Bound Multiplication}
Using \eqref{Eq: Full Parameter Count} the number of parameters in the neural network Mult$_{\eta}^D$ is bounded by $((\eta+5)\ceil{\log_2(D)}+1)42D^2\leq(\eta+5)126D^2\log_2(D).$
\end{Remark}

We now have all the required ingredients to finish the proof of Theorem \ref{T:MSTLogNetworkResult}:
\begin{proof}[Proof of Theorem \ref{T:MSTLogNetworkResult}]
Since $a_1=\sigma(0\cdot x+a_1),$ the projection $\pi$ can be written in terms of ReLU functions as
\begin{equation*}
    \pi(x)=\max\big(a_1,\min(x,a_R)\big)=
    \sigma(0\cdot x+a_1)+\sigma(x-a_1)-\sigma(x-a_R).
\end{equation*}
For $a_R\leq 1,$ all network parameters are bounded by one and this defines a neural network in $\mF_{\id}(1,(1,3,1),8).$ When $a_R>1,$ we replace $\sigma(x-a_R)$ with $\sigma(x-1)$ as we are only interested in input in the interval $[0,1].$
Having thus obtained a value in the interval $[a_1,a_R]$, we can, for any $r\in\{1,\cdots,R\}$, apply the network $U_{F_r}$ from Proposition \ref{P: ReLU Construction Unity} to it. Using depth synchronization \eqref{Eq: Depth synchronization} and parallelization \eqref{Eq: Parallelization}, we can combine the network $U_{F_r}$ with a parallel network that forwards the input value to obtain a network in the network class
\begin{equation*}
    \mF_{\id}\Big(4\big((1+\ceil{\beta})^2+\log_2(M\ceil{\beta}^{\floor{\beta}})\big),(1,3,1,4,\cdots,4,2),13\big((1+\ceil{\beta})^2+\log_2(M\ceil{\beta}^{\floor{\beta}})\big)\Big),
\end{equation*}
that maps $x\in[0,1]$ to $(F_r(\pi(x)),\pi(x)).$
The next step is to construct a network that approximates $F_r(x)T^{\beta}_{a_r}(x).$ Since $a_r\in[M^{-1},1+M^{-1}],$ \eqref{Eq: Bound On Sum of cGamma} allows us, for $\gamma=1,\cdots,\floor{\beta}$, to use the network Mult$_{\eta}^{\gamma+1}$ with input vector $(F_r(\pi(x)),\pi(x),\cdots,\pi(x))$ to compute approximately the function $F_r(\pi(x))\pi(x)^{\gamma},$ and multiply its output with $c_{\gamma}/\ceil{\beta}2^{\floor{\beta}+1}M^{\ceil{\beta}}$. For each $\gamma\in\{1,\cdots,\floor{\beta}\}$ we have a network that approximately computes the function $x \mapsto F_r(\pi(x))\pi(x)^{\gamma}c_{\gamma}/\ceil{\beta}2^{\floor{\beta}+1}M^{\ceil{\beta}}$. We now consider the network that computes these functions in parallel and combines this with a single shallow hidden node network to approximately compute $F_r(\pi(x))c_{0}/\ceil{\beta}2^{\floor{\beta}+1}M^{\ceil{\beta}}$. Making use of parallelization \eqref{Eq: Parallelization}, depth synchronization \eqref{Eq: Depth synchronization} and Remark \ref{R: Parameter Bound Multiplication}, this yields a network $G_{F_r}\in \mF_{\id}(L^*,(1,6(\ceil{\beta})^2,\cdots,6(\ceil{\beta})^2,1),s^*),$ with
\begin{equation*}
\begin{aligned}
L^*&=4((1+\ceil{\beta})^2+\log_2(M\ceil{\beta}^{\floor{\beta}}))+2(\eta+5)\log_2(\ceil{\beta})\\
s^*&=13((1+\ceil{\beta})^2+\log_2(M\ceil{\beta}^{\floor{\beta}}))+(\eta+5)\log_2(\ceil{\beta})126(\ceil{\beta})^3
\end{aligned}
\end{equation*}
 such that $$\left|G_{F_r}(x)- F_{r}(\pi(x))\sum_{\gamma=0}^{\floor{\beta}}\frac{c_{\gamma}}{\ceil{\beta}2^{\floor{\beta}+1}M^{\ceil{\beta}}}\pi(x)^{\gamma}\right|\leq 3^{\ceil{\beta}}2^{-\eta}.$$ Due to the normalization constant $\ceil{\beta}2^{\floor{\beta}+1}M^{\ceil{\beta}}$ it holds that $G_{F_r}(x)\in[-1,1]$ when $\pi(x)$ is in the support of $F_{r}$. If $\pi(x)$ is outside the support of $F_r$, then Lemma \ref{L: Combined Multiplication} guarantees that $G_{F_r}(x)=0$. Similarly for $F_r$ replaced by $H_r,$ we can construct deep ReLU networks $G_{H_r}$ with the same properties.

Using the $R$ networks $G_{H_r}$ and $R-1$ networks $G_{F_r}$ in parallel together with the observation that each $x$ can be in the support of at most one $F_r$ and one $H_r,$ this yields a deep ReLU network with output $\sum_{r=2}^RG_{F_r}(x)+\sum_{r=1}^RG_{H_r}(x),$ such that
\begin{equation*}
    \left|\sum_{r=2}^RG_{F_r}(x)+\sum_{r=1}^RG_{H_r}(x)-\frac{T^{\beta}(\pi(x))}{\ceil{\beta}2^{\floor{\beta}+1}M^{\ceil{\beta}}}\right|\leq 3^{\ceil{\beta}}2^{-\eta+1}.
\end{equation*}
In the next step we compose the network construction with a scaling network. For this we use the scaling network from Proposition \ref{P: Large Scale} with constant $C=\ceil{\beta}2^{\floor{\beta}+1}M^{\ceil{\beta}}.$ Since the input can be negative we use two of those networks in parallel as described in Proposition \ref{P: Negative numbers.}. This gives us a network
\begin{equation*}
    \widetilde{G}\in\mF_{\id}\bigg(L^*+4\log_2\Big(\ceil{\beta}2^{\floor{\beta}+1}M^{\ceil{\beta}}\Big),\mathbf{m}^*,2Rs^*+16\log_2\Big(\ceil{\beta}2^{\floor{\beta}+1}M^{\ceil{\beta}}\Big)\bigg),
\end{equation*} where $\mathbf{m}^*=(1,12R(\ceil{\beta})^2,\cdots,12R(\ceil{\beta})^2,1),$
 such that
\begin{equation*}
  \left|\widetilde{G}(x)-T^{\beta}(\pi(x))\right| \leq \ceil{\beta}2^{\floor{\beta}+2}M^{\ceil{\beta}}3^{\ceil{\beta}}2^{-\eta}.
\end{equation*}
Setting $\eta=\ceil{\log_2(\ceil{\beta}2^{\floor{\beta}+2}M^{\ceil{\beta}+1}3^{\ceil{\beta}})},$ this is upper bounded by $M^{-1}.$
Applying the triangle inequality, the mean value theorem and Lemma \ref{L: Error bound for the Complete Log Approximation} yields
\begin{equation}\label{Eq: Non-truncatedGnetworkBound}
    \left|e^{\widetilde{G}(x)}-x\right|\leq\left|e^{\widetilde{G}(x)}-e^{T^{\beta}\log(\pi(x))}\right|+\left|e^{T^{\beta}\log(\pi(x))}-x\right|\leq \frac{e^{2/M}}{M}+\frac{1}{M}\leq \frac{4}{M},
\end{equation}
where the term $e^{2/M}$ comes from noticing that $|\widetilde{G}(x)-T^{\beta}\log(\pi(x))|\leq M^{-1}$, $|T^{\beta}\log(\pi(x))-\log(1)|\leq M^{-1}$ and triangle inequality.

To derive the lower bound $G(x)\geq \log(4/M),$ we construct a network that computes the maximum between $\widetilde{G}(x)$ and $\log(4/M).$ Since $M\geq 1$ implies $|\log(4/M)|/ \ceil{\beta}2^{\floor{\beta}+1}M^{\ceil{\beta}}\leq 1,$ we can achieve this by adding one additional layer before the scaling. This layer can be written as
\begin{equation}\label{Eq: Lower bound enforcement}
    \sigma\Big(x-\frac{\log(4/M)}{\ceil{\beta}2^{\floor{\beta}+1}M^{\ceil{\beta}}}\Big)+\frac{\log(4/M)}{\ceil{\beta}2^{\floor{\beta}+1}M^{\ceil{\beta}}}\sigma(1).
\end{equation}
 Applying the scaling as before yields a network $G(x)=\max\{\widetilde{G}(x),\log(4/M)\}$ that is in the same network class as $\widetilde{G}(x)$. For the upper bound notice that if $G(x)=\widetilde{G}(x)$, then the bound follows from \eqref{Eq: Non-truncatedGnetworkBound}. When $G(x)=\log(4/M),$ then $\widetilde{G}(x)\leq \log(4/M),$ so \eqref{Eq: Non-truncatedGnetworkBound} implies that $x\leq 8/M.$ Hence
\begin{equation*}
    \left|e^{G(x)}-x\right|=\left|\frac{4}{M}-x\right|\leq \frac{4}{M}.
\end{equation*}
The network size as given in the theorem is an upper bound on the network size obtained here, which is allowed by the depth-synchronization followed by the enlarging property, and is done in order to simplify the expressions.
\end{proof}
Figure \ref{fig:LogNetworkConstruction} shows the main substructures of the deep ReLU network construction in this proof. 

\begin{figure}
\tikzset{Output/.style={}}
\tikzset{Scaling/.style={rectangle,draw=black!60, minimum height=6mm, minimum width=1cm}}
\tikzset{Restriction/.style={}}
\tikzset{NetworkPic/.style={draw=black!60, minimum height=6mm, minimum width=1cm}}
\tikzset{SubFH/.style={thick, black}}
\tikzset{SubMult/.style={thick, black}}
\tikzset{Multiplication/.style={}}
\tikzset{PartNet/.pic={\draw (-0.5cm,-0.15cm)-- (-0.20cm,-0.15cm) -- (0cm,0.25cm) -- (0.30cm,-0.15cm)--(0.5cm,-0.15cm);\draw[NetworkPic] (-0.5cm,-0.2cm) rectangle (0.5cm,0.4cm);}}
\tikzset{PartNetH1/.pic={\draw (-0.5cm,0.25cm)-- (-0.3cm,-0.15cm) -- (0.5cm,-0.15cm);\draw[NetworkPic] (-0.5cm,-0.2cm) rectangle (0.5cm,0.4cm);}}
\tikzset{PartNetHR/.pic={\draw (-0.5cm,-0.15cm) -- (0.20cm,-0.15cm)--(0.5cm,0.25cm);\draw[NetworkPic] (-0.5cm,-0.2cm) rectangle (0.5cm,0.4cm);}}
\tikzset{RestPhi/.pic={\draw (-0.75cm,0cm) -- (-0.5cm,0cm)--(0.5cm,0.5cm)--(0.75cm,0.5cm);\draw[NetworkPic] (-0.75cm,-0.2cm) rectangle (0.75cm,0.6cm);\node at ($(-0.5cm,0cm)!.5!(0.5cm,0.5cm)$) [label={[xshift=0.5cm, yshift=-0.6cm]$\pi(x)$}] {};}}

\tikzset{RestLB/.pic={\draw (-0.75cm,0cm) -- (-0.5cm,0cm)--(0.75cm,0.6cm);\draw[NetworkPic] (-0.75cm,-0.2cm) rectangle (0.75cm,0.6cm);}}
    \centering
    \begin{adjustbox}{max width=\textwidth}
    \begin{tikzpicture}

\pic [local bounding box=NF2] at(-10cm,0cm) {PartNet};
\node (SF2N) [Scaling, right=of NF2] {scale};
\draw[->] (NF2) -- (SF2N) node[midway, above=8pt] (f2N){$U_{F_2}$};
\node[fit=(SF2N)(NF2)(f2N),SubFH,draw] (F2) {};

\pic [local bounding box=NF3, below=1.25cm of NF2] {PartNet};
\node (SF3N) [Scaling, right=of NF3] {scale};
\draw[->] (NF3) -- (SF3N) node[midway, above=8pt] (f3N){$U_{F_3}$};
\node[fit=(SF3N)(NF3)(f3N),SubFH,draw] (F3){};

\pic [local bounding box=NFR, below=2cm of NF3] {PartNet};
\node (SFRN) [Scaling, right=of NFR] {scale};
\draw[->] (NFR) -- (SFRN) node[midway, above=8pt] (fRN){$U_{F_R}$};
\node[fit=(SFRN)(NFR)(fRN),SubFH,draw] (FR) {};

\node at ($(F3.south)!.4!(FR.north)$) {\vdots};

\pic [local bounding box=NH1, below=1.25cm of NFR] {PartNetH1};
\node (SH1N) [Scaling, right=of NH1] {scale};
\draw[->] (NH1) -- (SH1N) node[midway, above=8pt] (h1N){$U_{H_1}$};
\node[fit=(SH1N)(NH1)(h1N),SubFH,draw] {};

\pic [local bounding box=NH2, below=1.25cm of NH1] {PartNet};
\node (SH2N) [Scaling, right=of NH2] {scale};
\draw[->] (NH2) -- (SH2N) node[midway, above=8pt] (h2N){$U_{H_2}$};
\node[fit=(SH2N)(NH2)(h2N),SubFH,draw] (H2) {};

\pic [local bounding box=NHR1, below=2cm of NH2] {PartNet};
\node (SHR1N) [Scaling, right=of NHR1] {scale};
\draw[->] (NHR1) -- (SHR1N) node[midway, above=8pt] (hR1N){$U_{H_{R-1}}$};
\node[fit=(SHR1N)(NHR1)(hR1N),SubFH,draw] (HR1) {};

\node at ($(H2.south)!.4!(HR1.north)$) {\vdots};

\pic [local bounding box=NHR, below=1.25cm of NHR1] {PartNetHR};
\node (SHRN) [Scaling, right=of NHR] {scale};
\draw[->] (NHR) -- (SHRN) node[midway, above=8pt] (hRN){$U_{H_R}$};
\node[fit=(SHRN)(NHR)(hRN),SubFH,draw] (HR) {};

\node[fit={($(HR.south east)+(3pt,-1pt)$)($(F2.north west)+(-3pt,1pt)$)},label=above:Partition of unity,draw,very thick] (partition) {};

\node[below=1.25 cm of hRN,draw, black, very thick,minimum height=10mm, minimum width = 37mm] (Identity) {Identity};

\pic [local bounding box=RESP, left=2.5cm of NH1] {RestPhi};
\node[fit={($(RESP.north west)+(-3pt,3pt)$)($(RESP.south east)+(3pt,-3pt)$)}, label=above:Projection,draw,very thick] (RestrictionPhi) {};

\draw[->] ($(RESP.east)+(-2pt,0)$) -- (NHR.west);
\draw[->] ($(RESP.east)+(-2pt,0)$) -- (NHR1.west);
\draw[->] ($(RESP.east)+(-2pt,0)$) -- (NH2.west);
\draw[->] ($(RESP.east)+(-2pt,0)$) -- (NH1.west);
\draw[->] ($(RESP.east)+(-2pt,0)$) -- (NFR.west);
\draw[->] ($(RESP.east)+(-2pt,0)$) -- (NF3.west);
\draw[->] ($(RESP.east)+(-2pt,0)$) -- (NF2.west);
\draw[->] ($(RESP.east)+(-2pt,0)$) -- ($(NH2.south west)!.4!(NHR1.north west)$);
\draw[->] ($(RESP.east)+(-2pt,0)$) -- ($(NF3.south west)!.4!(NFR.north west)$);

\draw[->] ($(RESP.east)+(-2pt,0)$) --(Identity.west);

\node [left=1cm of RESP,label=above:Input] (Input) {$x$};
\draw[->] (Input) -- (RESP);

\node[Multiplication, above right= 0cm and 7cm  of NF2] (Mult1) {Identity};
\node[Multiplication, below=0.1mm of Mult1] (Mult1for) {$F_2(\pi(x))$};
\node[fit={($(Mult1for.south east)+(0.55cm,0cm)$)($(Mult1for.south west)+(-0.55cm,0cm)$)(Mult1)},SubMult,draw] (M1) {};

\node[Multiplication, below=1cm of Mult1] (Mult2) {Mult$_{\eta}^{2}$};
\node[Multiplication, below=0.1mm of Mult2] (Mult2for) {$F_2(\pi(x))\pi(x)$};
\node[fit={($(Mult2for.south east)+(0.22cm,0cm)$)($(Mult2for.south west)+(-0.22cm,0cm)$)(Mult2)},SubMult,draw] (M2) {};

\node[Multiplication, below=1.5cm of Mult2] (MultB) {Mult$_{\eta}^{\floor{\beta}}$};
\node[Multiplication, below=0.1mm of MultB] (MultBfor) {$F_2(\pi(x))\pi(x)^{\floor{\beta}}$};
\node[fit={(MultB)(MultBfor)},SubMult,draw] (MB) {};

\node at ($(Mult2for.south)!.4!(MultB.north)$) {\vdots};

\node[fit={($(M1.north west)+(-3pt,10pt)$)(M2)($(MB.south east)+(3pt,0pt)$)},draw, very thick] (GF2) {};
\node[below] at (GF2.north) {$G_{F_2}$};

\draw[->] (SF2N.east) -- (M1.west);

\draw[->] (SF2N.east) -- (M2.west);
\draw[->] (Identity.east) -- (M2.west);
\draw[->] (SF2N.east) -- ($(M2.west)!.5!(MB.west)$);
\draw[->] (Identity.east) -- ($(M2.west)!.5!(MB.west)$);
\draw[->] (SF2N.east) -- (MB.west);
\draw[->] (Identity.east) -- (MB.west);

\node[below=0.1cm of GF2, draw, very thick, minimum width =34mm, minimum height=8mm] (GF3) {$G_{F_3}$};

\node[below=1cm of GF3, draw, very thick, minimum width =34mm, minimum height=8mm] (GFR) {$G_{F_R}$};
\node[below=0.1cm of GFR, draw, very thick, minimum width =34mm, minimum height=8mm] (GH1) {$G_{H_1}$};
\node[below=0.1cm of GH1, draw, very thick, minimum width =34mm, minimum height=8mm] (GH2) {$G_{H_2}$};

\node[below=1cm of GH2, draw, very thick, minimum width =34mm, minimum height=8mm] (GHR1) {$G_{H_{R-1}}$};
\node[below=0.1cm of GHR1, draw, very thick, minimum width =34mm, minimum height=8mm] (GHR) {$G_{H_{R}}$};

\node at ($(GF3.south)!.4!(GFR.north)$) {\vdots};
\node at ($(GH2.south)!.4!(GHR1.north)$) {\vdots};

\draw[->] (SF3N.east) -- (GF3.west);
\draw[->,black!50] (Identity.east) -- (GF3.west);

\draw[->] ($(SF3N.south east)!.4!(SFRN.north east)$) -- ($(GF3.south west)!.4!(GFR.north west)$);
\draw[->,black!50] (Identity.east) -- ($(GF3.south west)!.4!(GFR.north west)$);

\draw[->] (SFRN.east) -- (GFR.west);
\draw[->,black!50] (Identity.east) -- (GFR.west);

\draw[->] (SH1N.east) -- (GH1.west);
\draw[->,black!50] (Identity.east) -- (GH1.west);

\draw[->] (SH2N.east) -- (GH2.west);
\draw[->,black!50] (Identity.east) -- (GH2.west);

\draw[->] ($(SH2N.south east)!.4!(SHR1N.north east)$) -- ($(GH2.south west)!.4!(GHR1.north west)$);
\draw[->,black!50] (Identity.east) -- ($(GH2.south west)!.4!(GHR1.north west)$);

\draw[->] (SHR1N.east) -- (GHR1.west);
\draw[->,black!50](Identity.east) -- (GHR1.west);

\draw[->] (SHRN.east) -- (GHR.west);
\draw[->,black!50] (Identity.east) -- (GHR.west);

\pic [local bounding box=RESPLB, right=3.5cm of GF3] {RestLB};
\node[fit={($(RESPLB.north west)+(-3pt,3pt)$)($(RESPLB.south east)+(3pt,-3pt)$)}, label={[align=center]above:\eqref{Eq: Lower bound enforcement}},draw,very thick] (RestrictionLB) {};

\draw[->] (M1.east) -- (RESPLB.west) node[above, near start] {$c_0$};
\draw[->] (M2.east) -- (RESPLB.west) node[above, near start] {$c_1$};
\draw[->] ($(M2.east)!.5!(MB.east)$) -- (RESPLB.west) node[above, near start] {$c_{\gamma}$};
\draw[->] (MB.east) -- (RESPLB.west) node[above,near start] {$c_{\floor{\beta}}$};

\draw[->] (GF3.east) -- (RESPLB.west) node[above,near start] {$\mathbf{c}$};
\draw[->] ($(GF3.south east)!.4!(GFR.north east)$) -- (RESPLB.west);
\draw[->] (GFR.east) -- (RESPLB.west) node[above,near start] {$\mathbf{c}$};
\draw[->] (GH1.east) -- (RESPLB.west) node[above,near start] {$\mathbf{c}$};
\draw[->] (GH2.east) -- (RESPLB.west) node[above,near start] {$\mathbf{c}$};
\draw[->] ($(GH2.south east)!.4!(GHR1.north east)$)-- (RESPLB.west);
\draw[->] (GHR1.east) -- (RESPLB.west) node[above,near start] {$\mathbf{c}$};
\draw[->] (GHR.east) -- (RESPLB.west) node[below right,near start] {$\mathbf{c}$};

\node (SP) [Scaling, above right=of RESPLB, label={[align=center]above:positive\\part}] {scale};
\node (SN) [Scaling, below right=of RESPLB, label={[align=center]above:negative\\part}] {scale};

\draw[->] (RESPLB.east) -- (SP.west) node[above, midway] {$+$};
\draw[->] (RESPLB.east) -- (SN.west) node[above, midway] {$-$};

\node [below right=1cm and 1.2cm of SP,label=above:Output] (Output) {$G(x)$};
\draw[->] (SP.east) -- (Output.west) node[above, near start] {$+$};
\draw[->] (SN.east) -- (Output.west) node[above, near start] {$-$};

\node[fit={(SP)($(SN.east)+(0.6cm,0cm)$)(GF2)(GHR)($(GF3.west)+(-0.5cm,0cm)$)}, draw, very thick,label=above:Taylor] (Taylor) {};

\node[fit={($(Taylor.north east)+(0cm,0.5cm)$)(RestrictionPhi)(Identity)(partition)}, draw, ultra thick, label=above:$G(x)$] (TGX) {};

\end{tikzpicture}
    \end{adjustbox}
    \caption{The construction of the logarithm approximation network $G$ of Theorem \ref{T:MSTLogNetworkResult} from subnetworks. The difference between the networks $G$ and $\widetilde{G}$ is the single layer which enforces the lower bound, which is not present in the network $\widetilde{G}$.}
    \label{fig:LogNetworkConstruction}
\end{figure}
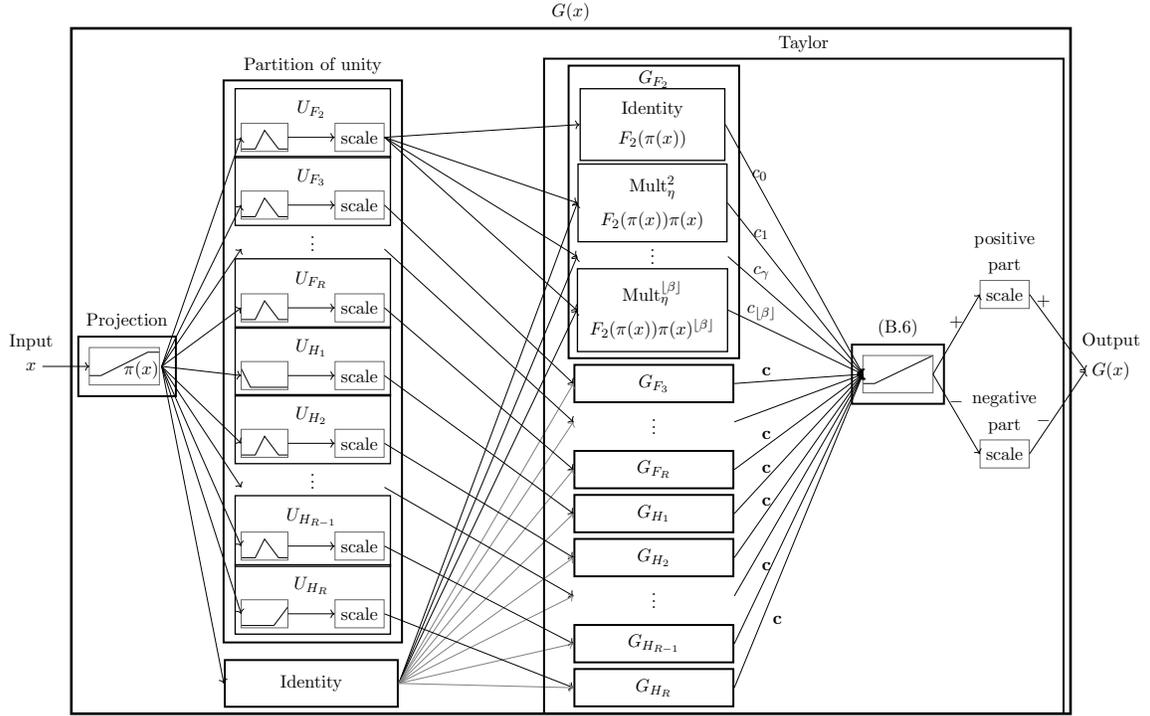

\section{Further technicalities}
\label{appendix.proofs}

\begin{propositie}[Bernstein's inequality]\label{P: Bernsteins Inequality}
For independent random variables $(Z_i)_{i=1}^n$ with zero mean and moment bounds $\mathbb{E}|Z_i|^m\leq \tfrac{1}{2}m!U^{m-2}v_i$ for $m=2,3,\dots$ and $i=1,\dots,n$ for some constants $U$ and $v_i$, we have
$$\mathbb{P}\left(\left|\sum_{i=1}^nZ_i\right|> x\right)\leq 2e^{-\frac{x^2}{2v+2Ux}}, \quad \text{for}  \ v\geq \sum_{i=1}^nv_i.$$
\end{propositie}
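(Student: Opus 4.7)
The plan is to apply the standard Chernoff/exponential moment method, split the tail into the two sides $\pm$ and union-bound, then optimize over the Chernoff parameter. The moment bound on $|Z_i|^m$ is tailored precisely so that the exponential moment $\mathbb{E}[e^{\lambda Z_i}]$ admits a clean sub-exponential bound.

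More concretely, I would first fix $\lambda \in (0, 1/U)$ and handle one side, say $\mathbb{P}(\sum_i Z_i > x)$, by Markov's inequality on $e^{\lambda \sum_i Z_i}$ combined with independence:
\begin{equation*}
\mathbb{P}\Big(\sum_{i=1}^n Z_i > x\Big) \leq e^{-\lambda x} \prod_{i=1}^n \mathbb{E}\big[e^{\lambda Z_i}\big].
\end{equation*}
Expanding the exponential as a power series, using $\mathbb{E}[Z_i]=0$ to kill the linear term, and then applying the hypothesized moment bound gives
\begin{equation*}
\mathbb{E}\big[e^{\lambda Z_i}\big] \leq 1 + \sum_{m=2}^\infty \frac{\lambda^m}{m!} \cdot \tfrac{1}{2} m! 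U^{m-2} v_i = 1 + \frac{\lambda^2 v_i}{2} \sum_{m=0}^\infty (\lambda U)^m = 1 + \frac{\lambda^2 v_i}{2(1-\lambda U)},
\end{equation*}
which is valid precisely because $\lambda U < 1$. Using $1+t \leq e^t$ and taking the product over $i$, then invoking $v \geq \sum_i v_i$, yields $\prod_i \mathbb{E}[e^{\lambda Z_i}] \leq \exp\big(\lambda^2 v / (2(1-\lambda U))\big)$.

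The optimization step is the only mildly computational part. Substituting back, I would pick $\lambda = x/(v + U x)$, which indeed satisfies $\lambda U < 1$ since $\lambda U = Ux/(v+Ux) < 1$. A short calculation then gives $-\lambda x + \lambda^2 v / (2(1-\lambda U)) = -x^2/(2(v + Ux))$, so
\begin{equation*}
\mathbb{P}\Big(\sum_{i=1}^n Z_i > x\Big) \leq \exp\Big(-\frac{x^2}{2v + 2Ux}\Big).
\end{equation*}
Finally, applying the same argument to the random variables $-Z_i$ (which satisfy the same hypotheses since $|{-Z_i}|^m = |Z_i|^m$) bounds the left tail $\mathbb{P}(\sum_i Z_i < -x)$ by the same quantity, and a union bound over the two one-sided events produces the factor of $2$ in the stated two-sided inequality.

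There is no genuine obstacle here; the only point to be careful about is the geometric-series computation, which requires $\lambda < 1/U$ and motivates the specific choice of $\lambda$ above. Everything else is bookkeeping with the moment generating function and the Chernoff bound.
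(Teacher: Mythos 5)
Your proposal is correct. Note that the paper does not actually prove this proposition: it only states that the formulation follows Lemma 2.2.11 of van der Vaart and Wellner and refers to Bennett (1962) for the proof. What you wrote is precisely the classical Chernoff/moment-generating-function argument underlying those references, and your computations check out: the moment hypothesis gives $\mathbb{E}[e^{\lambda Z_i}]\leq 1+\tfrac{\lambda^2 v_i}{2(1-\lambda U)}$ for $0<\lambda<1/U$, the choice $\lambda=x/(v+Ux)$ gives $1-\lambda U=v/(v+Ux)$ and hence the exponent $-x^2/(2v+2Ux)$, and the two-sided bound with the factor $2$ follows by applying the one-sided bound to $(-Z_i)_i$ and a union bound. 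The only point a fully rigorous write-up should make explicit is the justification for integrating the exponential series termwise: bound $e^{\lambda z}\leq 1+\lambda z+\sum_{m\geq 2}\lambda^m|z|^m/m!$ pointwise and apply monotone convergence to the nonnegative remainder series, which is summable precisely because $\lambda U<1$; this is routine and does not affect the validity of your argument.
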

This formulation of Bernstein's inequality is based on the formulation in Lemma 2.2.11 of \cite{WeakConvergenceEmpProcesses}. The proof can be found in \cite{bennett1962probability}.

The next elementary inequality generalizes Lemma 10 of \cite{NonParametricRegressionReLU}.
\begin{Lemma}\label{P: Epsilon aid inequality}
If $a,b,c,d$ are real numbers, $a\geq 0$, such that $|a-b|\leq 2\sqrt{a}c+d$, then, for each $\epsilon\in(0,1]$,
\begin{equation*}
(1-\epsilon)(b-d)-\frac{(1-\epsilon)^2}{\epsilon}c^2\leq a \leq (1+\epsilon)(b+d)+\frac{(1+\epsilon)^2}{\epsilon}c^2.
\end{equation*}
\end{Lemma}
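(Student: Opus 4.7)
My plan is to split the hypothesis $|a-b|\le 2\sqrt{a}\,c+d$ into its two one-sided inequalities $a-b\le 2\sqrt{a}\,c+d$ and $b-a\le 2\sqrt{a}\,c+d$, and then kill the troublesome $\sqrt{a}$ on the right-hand side by Young's inequality $2xy\le \alpha x^2+y^2/\alpha$ applied with $x=\sqrt{a}$ and $y=c$. By choosing $\alpha$ cleverly in terms of $\epsilon$, each resulting inequality becomes linear in $a$ and can be solved to give the claimed bounds.

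For the upper bound, I would take $\alpha=\epsilon/(1+\epsilon)$, which gives $2\sqrt{a}\,c\le \frac{\epsilon}{1+\epsilon}a+\frac{1+\epsilon}{\epsilon}c^{2}$. Substituting this in $a\le b+d+2\sqrt{a}\,c$ yields $a\bigl(1-\tfrac{\epsilon}{1+\epsilon}\bigr)\le b+d+\tfrac{1+\epsilon}{\epsilon}c^{2}$, and since $1-\epsilon/(1+\epsilon)=1/(1+\epsilon)$, multiplying through by $1+\epsilon$ produces exactly $a\le (1+\epsilon)(b+d)+\frac{(1+\epsilon)^{2}}{\epsilon}c^{2}$.

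For the lower bound, the analogous choice is $\alpha=\epsilon/(1-\epsilon)$ (valid when $\epsilon<1$), producing $2\sqrt{a}\,c\le \frac{\epsilon}{1-\epsilon}a+\frac{1-\epsilon}{\epsilon}c^{2}$. Inserting this into $a\ge b-d-2\sqrt{a}\,c$ gives $a\bigl(1+\tfrac{\epsilon}{1-\epsilon}\bigr)\ge b-d-\tfrac{1-\epsilon}{\epsilon}c^{2}$, i.e.\ $a/(1-\epsilon)\ge b-d-\tfrac{1-\epsilon}{\epsilon}c^{2}$, which rearranges to $a\ge (1-\epsilon)(b-d)-\frac{(1-\epsilon)^{2}}{\epsilon}c^{2}$. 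The boundary case $\epsilon=1$ makes the lower bound read $a\ge 0$, which is given by hypothesis and hence trivially holds, so no special argument is needed there.

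There is no real obstacle: the only thing to notice is the precise choice of the free parameter in Young's inequality so that the coefficient of $a$ on the right-hand side matches $1-1/(1+\epsilon)$ (upper bound) or $-(1+\epsilon/(1-\epsilon)-1)$-correction (lower bound), producing the clean factors $(1\pm\epsilon)$ and $(1\pm\epsilon)^{2}/\epsilon$ in the final inequalities. Everything else is routine algebra, and the use of $a\ge 0$ is tacit throughout in writing $\sqrt{a}$.
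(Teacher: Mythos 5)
Your proof is correct and follows essentially the same route as the paper: both split the two-sided hypothesis into its one-sided parts and apply the weighted elementary inequality $2xy\le \alpha x^2+y^2/\alpha$ (the paper writes it as $2xy\le x^2+y^2$ with $x=\sqrt{a}\sqrt{\epsilon}/\sqrt{1\pm\epsilon}$, $y=c\sqrt{1\pm\epsilon}/\sqrt{\epsilon}$, which is your choice of $\alpha=\epsilon/(1\pm\epsilon)$), and both dispose of the $\epsilon=1$ case of the lower bound via $a\ge 0$.
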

\begin{proof}
First notice that $|a-b|\leq 2\sqrt{a}c+d$ if and only if $-2\sqrt{a}c-d\leq a-b\leq 2\sqrt{a}c+d.$
Using that
$2xy\leq x^2+y^2$ for all $x,y\in\mathbb{R}$, we get for $x:=\sqrt{a}\sqrt{\epsilon}/\sqrt{1+\epsilon}$ and $y:=c\sqrt{1+\epsilon}/\sqrt{\epsilon}$, that
$$2\sqrt{a}c=2xy\leq x^2+y^2=\frac{\epsilon a}{1+\epsilon}+\frac{(1+\epsilon)c^2}{\epsilon}$$
and therefore $$a-b\leq \frac{\epsilon a}{1+\epsilon}+\frac{(1+\epsilon)c^2}{\epsilon}+d.$$ Rearranging the terms yields the upper bound of the lemma.
For the lower bound notice that if $\epsilon=1$, then the lower bound is zero, and holds since $a\geq 0$. For $\epsilon\in(0,1)$ using the same argument but now with $x=\sqrt{a}\sqrt{\epsilon}/\sqrt{1-\epsilon}$ and $y=c\sqrt{1-\epsilon}/\sqrt{\epsilon}$, gives
$$a-b\geq -\frac{\epsilon a}{1-\epsilon}-\frac{(1-\epsilon)c^2}{\epsilon}-d.$$ Rearranging the terms yields the lower bound of the proposition.
\end{proof}
The number $a$ is required to be nonnegative as otherwise $\sqrt{a}$ would not be a real number.
In the statement in \cite{NonParametricRegressionReLU} the constants $a,b,c,d$ are all required to be positive. However since the inequality $2xy\leq x^2+y^2$ holds for all real numbers $x,y$ the positivity constraint is not necessary. However, when $c$ and $d$ are negative the term $2\sqrt{a}c+d$ is negative, and no pair $a,b$ exists such that the condition is satisfied.

Recall that $d_{\tau}(\bf,\bg):=\sup_{\bx\in\mathcal{D}}\max_{k=1,\cdots,K}|(\tau\vee f_k(\bx))-(\tau\vee g_k(\bx))|.$ Observe that $d_{\tau}(\bf,\bg)=0$ does not imply $\bf=\bg$, which is why $d_{\tau}$ is not a metric.
The next lemma shows that this, however, defines a pseudometric.
\begin{Lemma}\label{P: truncation log pseudometric}
Let $\bf,\bg,\mathbf{h}:\mathcal{D}\rightarrow \mathbb{R}^K$, then for every $\tau\in\mathbb{R}$:
\begin{compactitem}
\item[(i)] $d_{\tau}(\bf,\bg)\geq 0$
\item[(ii)] $d_{\tau}(\bf,\bf)=0$
\item[(iii)] $d_{\tau}(\bf,\bg)=d_{\tau}(\bg,\bf)$
\item[(iv)] $d_{\tau}(\bf,\bg)\leq d_{\tau}(\bf,\mathbf{h})+d_{\tau}(\mathbf{h},\bg).$
\end{compactitem}

\end{Lemma}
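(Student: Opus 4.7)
The plan is to verify the four properties in turn. Properties (i)--(iii) are immediate from the construction of $d_\tau$ as a supremum over $\bx \in \mathcal{D}$ and a maximum over $k \in \{1,\ldots,K\}$ of ordinary absolute values. For (i), every term $|(\tau \vee f_k(\bx)) - (\tau \vee g_k(\bx))|$ is nonnegative, so the sup/max is nonnegative. For (ii), plugging in $\bg = \bf$ makes each summand $|(\tau \vee f_k(\bx)) - (\tau \vee f_k(\bx))| = 0$. For (iii), $|a - b| = |b - a|$ for all real $a, b$, so the inner term is symmetric in $(\bf,\bg)$ and the symmetry survives the sup and max.

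The only real content lies in (iv). The plan is to reduce it to the ordinary triangle inequality on $\mathbb{R}$. For any real numbers $a, b, c$, writing $a' := \tau \vee a$, $b' := \tau \vee b$, $c' := \tau \vee c$, one has $|a' - c'| \leq |a' - b'| + |b' - c'|$. Applying this pointwise with $a = f_k(\bx)$, $b = h_k(\bx)$, $c = g_k(\bx)$ gives
\begin{equation*}
|(\tau \vee f_k(\bx)) - (\tau \vee g_k(\bx))| \leq |(\tau \vee f_k(\bx)) - (\tau \vee h_k(\bx))| + |(\tau \vee h_k(\bx)) - (\tau \vee g_k(\bx))|.
\end{equation*}

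Next, I would use the standard subadditivity of $\max_k$ and $\sup_{\bx}$: for nonnegative functions $u_k(\bx), v_k(\bx)$, $\max_k (u_k + v_k) \leq \max_k u_k + \max_k v_k$ and $\sup_{\bx}(U(\bx) + V(\bx)) \leq \sup_{\bx} U(\bx) + \sup_{\bx} V(\bx)$. Taking the max over $k$ on both sides of the displayed inequality, then the sup over $\bx$, and applying these two subadditivity properties yields $d_\tau(\bf, \bg) \leq d_\tau(\bf, \mathbf{h}) + d_\tau(\mathbf{h}, \bg)$, which is (iv). I do not anticipate any real obstacle: the result is essentially bookkeeping, with the only conceptual point being that the clipping operation $a \mapsto \tau \vee a$ preserves the triangle inequality, which is transparent from writing it out.
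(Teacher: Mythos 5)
Your proposal is correct and follows essentially the same route as the paper: the paper also dispatches (i)--(iii) as immediate and proves (iv) by inserting the clipped $h_k$ pointwise and invoking the triangle inequality for the $\|\cdot\|_{\infty}$ norm, which is exactly your pointwise triangle inequality combined with subadditivity of $\max_k$ and $\sup_{\bx}$, only stated more compactly. Your version just spells out the bookkeeping that the paper leaves implicit.
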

\begin{proof}
(i), (ii) and (iii) follow immediately. (iv) follows from applying triangle inequality to the $\|\cdot\|_{\infty}$ norm,
\begin{equation*}
\begin{aligned}
d_{\tau}(\bf,\bg)&=\big\|\max_{k=1,\cdots,K}|(\tau\vee f_k(\cdot))-(\tau\vee g_k(\cdot))|\big\|_{\infty}\\
&\leq\big\|\max_{k=1,\cdots,K}|(\tau\vee f_k(\cdot))-(\tau\vee h_k(\cdot))|\big\|_{\infty}+\big\|\max_{k=1,\cdots,K}|(\tau\vee h_k(\cdot))-(\tau\vee g_k(\cdot))|\big\|_{\infty}\\
&=d_{\tau}(\bf,\mathbf{h})+d_{\tau}(\mathbf{h},\bg).
\end{aligned}
\end{equation*}
\end{proof}

\begin{Lemma}
If $\mG$ is a function class of functions from $\mathcal{D}$ to $[0,\infty)^K$, then for all $\delta>0$ and $\tau>0$
$$\mathcal{N}\big(\delta,\log(\mG),d_{\log(\tau)}(\cdot,\cdot)\big)\leq\mathcal{N}\big(\delta \tau,\mG,d_{\tau}(\cdot,\cdot)\big).$$
\end{Lemma}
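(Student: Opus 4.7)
The plan is to show that the map $\bf \mapsto \log(\bf)$ (applied coefficient-wise) is Lipschitz from $(\mG, d_\tau)$ into $(\log(\mG), d_{\log(\tau)})$ with Lipschitz constant $1/\tau$, and then to push a cover through this map.

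First I would observe that since the logarithm is monotone increasing, for each $k$ and $\bx$,
\begin{equation*}
    \log(\tau) \vee \log(f_k(\bx)) = \log\big(\tau \vee f_k(\bx)\big),
\end{equation*}
with the convention $\log(0) = -\infty$ (the truncation at $\log(\tau)$ makes this harmless). So $d_{\log(\tau)}(\log(\bf), \log(\bg))$ equals the sup-max of $|\log(\tau \vee f_k(\bx)) - \log(\tau \vee g_k(\bx))|$.

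Next, since $\tau \vee f_k(\bx) \geq \tau$ and $\tau \vee g_k(\bx) \geq \tau$, the mean value theorem applied to the logarithm on $[\tau, \infty)$ (where its derivative is bounded by $1/\tau$) gives
\begin{equation*}
    \big|\log(\tau \vee f_k(\bx)) - \log(\tau \vee g_k(\bx))\big| \leq \tfrac{1}{\tau}\big|(\tau \vee f_k(\bx)) - (\tau \vee g_k(\bx))\big|.
\end{equation*}
Taking the supremum over $\bx \in \mathcal{D}$ and maximum over $k$ yields $d_{\log(\tau)}(\log(\bf), \log(\bg)) \leq \tau^{-1} d_\tau(\bf, \bg)$.

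Finally, let $\bf_1, \ldots, \bf_N \in \mG$ be the centers of a minimal interior $\delta\tau$-cover of $\mG$ with respect to $d_\tau$, so that $N = \mathcal{N}(\delta\tau, \mG, d_\tau)$. Then $\log(\bf_1), \ldots, \log(\bf_N) \in \log(\mG)$ by definition of $\log(\mG)$, and for any $\bf \in \mG$ there is an $i$ with $d_\tau(\bf, \bf_i) \leq \delta\tau$; by the Lipschitz bound, $d_{\log(\tau)}(\log(\bf), \log(\bf_i)) \leq \delta$. Hence these $N$ centers form an interior $\delta$-cover of $\log(\mG)$ with respect to $d_{\log(\tau)}$, which gives the claimed inequality. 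There is no real obstacle here; the only subtlety is making sure the coefficient-wise logarithm interacts correctly with the truncation, which is resolved by the identity $\log(\tau) \vee \log(f_k(\bx)) = \log(\tau \vee f_k(\bx))$ noted above.
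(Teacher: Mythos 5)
Your proposal is correct and follows essentially the same route as the paper's proof: both push an interior $\delta\tau$-cover of $\mG$ under $d_\tau$ through the coefficient-wise logarithm, using that $\log$ is $\tau^{-1}$-Lipschitz on $[\tau,\infty)$ and that the truncated values lie in that interval. Your explicit use of the identity $\log(\tau)\vee\log(f_k(\bx))=\log(\tau\vee f_k(\bx))$ is just a slightly more careful phrasing of the same observation the paper makes.
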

\begin{proof}
Let $\delta>0$. Denote by $(\bg_j)_{j=1}^{\mathcal{N}_n}$ the centers of a minimal internal $\delta \tau$-covering of $\mG$ with respect to $d_{\tau}$ and let $\bg\in\mG$. By the cover property, there exist a $j\in\{1,\cdots,\mathcal{N}_n\}$ such that
$d_{\tau}(\bg,\bg_j)\leq \delta \tau$.

The derivative of $\log(u)$ is $1/u$, so the logarithm is Lipschitz on $[\tau,\infty)$ with Lipschitz constant $\tau^{-1}$. Applying this to $d_{\log(\tau)}(\log(\bg),\log(\bg_j))$, noticing that $\max\{\log(\tau),\log(x)\}\in[\log(\tau),\infty)$ for $x\in[0,\infty)$, yields
\begin{equation*}
\begin{aligned}
&\max_{\bx\in\mathcal{D}}\max_{k=1,\cdots,K}|(\log(\tau)\vee \log(g_k(\bx)))-(\log(\tau)\vee \log(g_{j,k}(\bx)))|\\
&\leq \tau^{-1} \max_{\bx\in\mathcal{D}}\max_{k=1,\cdots,K}|(\tau\vee g_k(\bx))-(\tau\vee g_{j,k})(\bx))|\\
&\leq \tau^{-1}\delta \tau=\delta.
\end{aligned}
\end{equation*}
Since $\bg\in\mG$ was arbitrary, this means that for all $\bg\in\mG$ there exists a $j\in \{1,\cdots,\mathcal{N}_n\}$ such that
$d_{\log(\tau)}(\log(\bg),\log(\bg_j))\leq \delta.$ Hence $(\log(\bg_j))_{j=1}^{\mathcal{N}_n}$ is a $\delta$-cover for $\log(\mG)$ with respect to $d_{\log(\tau)}$. Since the $\bg_j$ are in $\mG$, the $\log(\bg_j)$ are in $\log(\mG)$, thus this cover is an internal cover. Since $\mathcal{N}(\delta,\log(\mG),d_{\log(\tau)}(\cdot,\cdot))$ is the minimal number of balls with center in $\log(\mG)$ required to cover $\log(\mG)$. This proves the assertion.
\end{proof}

\begin{proof}[Proof of Lemma \ref{L: Inequality to relate to the risk}]
Let $\mathbf{p},\mathbf{q}\in\mathcal{S}^k$. Thus, $\sum_{k=1}^Kp_k=1,$  $\sum_{k=1}^Kq_k=1$ and
\begin{equation}\label{Eq: Inequality Adding 1 Trick}
\sum_{k=1}^Kp_k\left(B\wedge\log\left(\frac{p_k}{q_k}\right)\right)=\sum_{k=1}^K\left(p_k\left(B\wedge\log\left(\frac{p_k}{q_k}\right)\right)-p_k+q_k\right).
\end{equation}

Suppose for the moment that for any $k=1,\cdots, K,$
\begin{equation}\label{Eq: Termwise Objective}
p_k\left(B\wedge\log\left(\frac{p_k}{q_k}\right)\right)-p_k+q_k\geq \frac{1}{C_{m,B}}p_k\left|B\wedge\log\left(\frac{p_k}{q_k}\right)\right|^m,
\end{equation}
with $C_{m,B}:=\max\{m!,B^m/(B-1)\}.$ Applying this inequality to each term on the right hand side of \eqref{Eq: Inequality Adding 1 Trick} gives
\begin{equation*}
\sum_{k=1}^Kp_k\left(B\wedge\log\left(\frac{p_k}{q_k}\right)\right)\geq \sum_{k=1}^K \frac{1}{C_{m,B}}p_k\left|B\wedge\log\left(\frac{p_k}{q_k}\right)\right|^m.
\end{equation*}
Since $C_{m,B}>0,$ multiplying both sides of the inequality with $C_{m,B}$ yields the claim.

It remains to proof \eqref{Eq: Termwise Objective}. First we consider the case that $p_k=0$. By considering the limit we get that $0\log^m(0)=0$, for $m=1,2,\cdots$. Thus the right hand side of \eqref{Eq: Termwise Objective} is equal to $0$, while the left hand side is equal to $q_k$. Since $q_k\geq 0$, this proves \eqref{Eq: Termwise Objective} for this case.

Assume now that $p_k>0$. Dividing both sides by $p_k$ yields
$$B\wedge\log\left(\frac{p_k}{q_k}\right)-1+\frac{q_k}{p_k}\geq \frac{1}{C_{m,B}}\left|B\wedge\log\left(\frac{p_k}{q_k}\right)\right|^m.$$
If $p_k/q_k\geq e^B$ the inequality follows immediately. It remains to study the case that $p_k/q_k<e^B.$ In this case one can always replace $B\wedge \log(p_k/q_k)$ by $\log(p_k/q_k)$.
Introducing the new variable $u=q_k/p_k$ and replacing $C_{m,B}$ by $C>0$ gives rise to a function
$$H_{C,m}(u)=u-1-\log(u)-|\log(u)|^m/C.$$
It remains to show that $H_{C_{m,B},m}(u)\geq 0$ for all $u\geq e^{-B}$.
Obviously, $H_{C,m}(1)=0$ for all $C$, so we only have to consider $u\neq 1$.
Consider first $u>1$ and $C=m!.$ Using the substitution $u=e^s$ gives
$$m!e^s-m!(s+1)-s^m.$$
Substituting the power series for the exponential function leads to
\begin{equation*}
m!\sum_{n=0}^{\infty}\frac{s^n}{n!} -m!(1+s)-s^m=m!\sum_{n=2}^{m-1}\frac{s^n}{n!}+m!\sum_{n=m+1}^{\infty}\frac{s^n}{n!}>0,
\end{equation*}
where the last strict inequality holds because $u>1$ and thus $s>0$. Thus $H_{m!,m}(u)\geq 0$ for $u>1$.

For $u\in(e^{-b},1)$, dividing by $u-\log(u)-1$ gives us the following constraint on the constant $C:$
\begin{equation}\label{Eq: The equation for the constant}
C\geq \sup_{u\in(e^{-B},1)}\frac{|\log(u)|^m}{u-\log(u)-1}.
\end{equation} This division can be done since $u-\log(u)-1>0$ when $u>0$, $u\neq 1$ and zero if and only if $u=1$, which for example can be shown by observing the sign of the derivative. 

Define $C_{<1}$ as $C_{<1}:=B^m/(B-1).$ Since $|\log(u)|^m/(u-\log(u)-1)$ is strictly decreasing on $(0,1)$, see Proposition \ref{P: Collection of Existence and Uniqueness results} (II), it follows for $u\in[e^{-B},1)$ that $|\log(u)|^m/(u-\log(u)-1)\leq B^m/(e^{-B}+B-1).$ Now since $B>1$, it follows that $B^m/(u+B-1)$ is also strictly decreasing on $[0,1]$. Hence on $[0,e^{-B}]$ we have $B^m/(e^{-B}+B-1)\leq B^m/(u+B-1)\leq C_{<1}$, thus $C_{<1}$ satisfies \eqref{Eq: The equation for the constant}.

Now notice that $C_{m,B}=\max\{C_{<1},m!\}.$ Consequently $H_{C_{m,B},m}(u)\geq 0,$ for all $u\geq e^{-B},$ proving \eqref{Eq: Termwise Objective}.
\end{proof}

For all $m=2,3,\dots$ define the function $F_m:(0,\infty)\rightarrow [0,\infty)$ as
\begin{equation*}
F_m(u):=\frac{|\log^m(u)|}{u-\log(u)-1}.
\end{equation*}
Since $u-\log(u)-1\geq 0$, this function indeed takes only positive values. Furthermore since $u-\log(u)-1=0$ only when $u=1$ this is the only possible singularity/discontinuity of this function. The next result derives some properties of the function $F_m(u).$
\begin{propositie}\label{P: Collection of Existence and Uniqueness results}
If $m=2,3,\cdots,$ then
\begin{compactitem}
\item[(i)] $\lim_{u\rightarrow 1}F_2(u)= 2$ and $\lim_{u\rightarrow 1}F_m(u)= 0$ for $m >2$
\item[(ii)] $F_{m}(u)$ is strictly decreasing on $(0,1)$.
\end{compactitem}

\end{propositie}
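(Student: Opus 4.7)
My plan is to handle the two parts separately, both via reduction to elementary calculus on a single auxiliary function of one variable.

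For part (i), I would expand around $u=1$ by writing $u = 1+t$ with $t\to 0$. Since $\log(1+t) = t - t^2/2 + t^3/3 - O(t^4)$, the denominator satisfies
\begin{equation*}
    u - \log(u) - 1 = t - \log(1+t) = \tfrac{1}{2} t^2 - \tfrac{1}{3} t^3 + O(t^4) = \tfrac{1}{2}t^2 (1+O(t)),
\end{equation*}
while the numerator satisfies $|\log(1+t)|^m = |t|^m (1+O(t))^m$. Dividing gives $F_m(1+t) = 2|t|^{m-2}(1+o(1))$. Taking $t\to 0$ yields $F_2(u)\to 2$ and $F_m(u)\to 0$ for $m>2$, as claimed.

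For part (ii), the cleanest reduction is the substitution $u = e^{-s}$, which sends $(0,1)$ bijectively (and monotonically decreasingly) onto $(0,\infty)$. Since $|\log(u)|^m = s^m$ and $u - \log(u) - 1 = e^{-s} + s - 1$, showing $F_m$ is strictly decreasing on $(0,1)$ is equivalent to showing that
\begin{equation*}
    g_m(s) := \frac{s^m}{e^{-s} + s - 1}
\end{equation*}
is strictly increasing on $(0,\infty)$. Using the quotient rule, the sign of $g_m'(s)$ coincides with the sign of
\begin{equation*}
    m s^{m-1}(e^{-s}+s-1) - s^m(1 - e^{-s}),
\end{equation*}
and factoring out $s^{m-1}$, the problem reduces to proving that $h_m(s) := (m+s)e^{-s} + (m-1)s - m$ is strictly positive for all $s>0$ whenever $m\geq 2$.

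The main (and only nontrivial) obstacle is the positivity of $h_m$, which I would settle by two successive differentiations starting from the anchor $h_m(0)=0$. A direct computation gives $h_m'(s) = (1-m-s)e^{-s} + (m-1)$, so $h_m'(0)=0$, and $h_m''(s) = (m+s-2)e^{-s}$. For $m\geq 2$ and $s>0$, $h_m''(s)>0$, so integration yields $h_m'(s)>h_m'(0)=0$ on $(0,\infty)$, and a second integration gives $h_m(s)>h_m(0)=0$ on $(0,\infty)$. This establishes $g_m'>0$ on $(0,\infty)$, hence the strict monotonicity claim on $(0,1)$.
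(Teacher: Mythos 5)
Your proof is correct, and part (ii) takes a genuinely different route from the paper. For part (i) the paper simply applies L'Hopital's rule twice, which is the same computation your Taylor expansion $u-\log(u)-1=\tfrac12 t^2(1+O(t))$, $|\log(1+t)|^m=|t|^m(1+O(t))$ makes explicit; these are interchangeable. For part (ii) the paper invokes the monotone form of L'Hopital's rule (the rule of Pinelis / Anderson--Vamanamurthy--Vuorinen) twice: with $f=|\log^m(u)|$, $g=u-\log(u)-1$ vanishing at $u=1$, it reduces strict monotonicity of $f/g$ to that of $f'/g'$, and then again to $\bar f'/\bar g'=(m-1)m|\log^{m-2}(u)|/u$, which is visibly strictly decreasing on $(0,1)$. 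You instead substitute $u=e^{-s}$, reduce to showing $g_m(s)=s^m/(e^{-s}+s-1)$ is strictly increasing on $(0,\infty)$, and verify positivity of the quotient-rule numerator through $h_m(s)=(m+s)e^{-s}+(m-1)s-m$ with $h_m(0)=h_m'(0)=0$ and $h_m''(s)=(m+s-2)e^{-s}>0$; all these computations check out, including the equivalence under the orientation-reversing change of variables. What the paper's approach buys is brevity (two lines once the monotonicity rule is cited, with no quotient-rule bookkeeping); what yours buys is self-containedness, since it needs nothing beyond the fundamental theorem of calculus and avoids the external citation entirely, at the cost of a slightly longer explicit computation.
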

\begin{proof} (i):\ For $u=1$, it holds that $(u-\log(u)-1)=0$ and $|\log^m(u)|=0$. Applying L'Hopital's rule twice yields the desired result.\ \\
(ii):\ The L'Hopital's like rule for monotonicity, see \cite{pinelis2002hospital} or Lemma 2.2 in \cite{anderson1993inequalities}, states that a function $f/g$ on an interval $(a,b)$, satisfying $g'\neq 0$ and either $f(a)=0=g(a)$ or $f(b)=0=g(b)$, is strictly increasing/decreasing if $f'/g'$ is strictly increasing/decreasing on $(a,b)$.
For $f=|\log^m(u)|$ and $g=u-\log(u)-1,$ we have 
$$f'/g'=\frac{m\log(u)|\log^{m-2}(u)|}{u-1}$$ and for $\bar{f}=m\log(u)|\log^{m-2}(u)|$ and $\bar{g}=u-1,$ we obtain
$$\bar{f}'/\bar{g}'=\frac{(m-1)m|\log^{m-2}(u)|}{u}.$$
On $u\in(0,1),$ $\bar{f}'/\bar{g}'$ is strictly decreasing.
 Applying the L'Hopital's like rule for monotonicity twice yields the statement.
\end{proof}

\begin{proof}[Proof of Lemma \ref{lem.Hell_KL_bd}]
The inequality $\KL_2(P,Q)\leq \KL_{B}(P,Q)$ follows direct from the definition of the truncated Kullback-Leibler divergence.
Write $P=P^a+P^s$ for the Lebesgue decomposition of $P$ with respect to $Q$ such that $P^a \ll Q.$ The Lebesgue decomposition ensures existence of a set $A$ with $P^a(A)=0=P^s(A^c).$ For $x\in A,$ we define $dP/dQ(x):=+\infty.$
For the dominating measure $\mu=(P+Q)/2,$ denote by $p,p^a,p^s,q$ the $\mu$-densities of $P,P^a,P^s,Q,$ respectively. Since $p^sq=0,$
\begin{align*}
	H^2(P,Q) = \int \big(p^a +p^s - \sqrt{p^aq}\big) 
	\leq \int_{0<p^a/q\leq e^2} \big(p^a - \sqrt{p^aq}\big)+\int_{p^a/q> e^2} p^a + \int p^s.
\end{align*}
For every $u\in \mathbb{R},$ we have $1-u\leq e^{-u}$ and hence $e^u-1\leq ue^u.$ Substituting $u=\log(\sqrt{y})$ yields $\sqrt{y}-1 \leq \sqrt{y}\log(\sqrt{y})$ and therefore $y-\sqrt{y} \leq y\log(\sqrt{y})=y\log(y)/2.$ With $y=p^a/q,$ we find,
\begin{align*}
	H^2(P,Q) 
	\leq \int_{0<p^a/q\leq e^2} \frac{p^a}{2q} \log\Big(\frac{p^a}{q}\Big) q +\int_{p^a/q > e^2} p^a + \int p^s.
\end{align*}

The other direction works similarly. Second order Taylor expansion around one gives for $y>0,$ $y\log(y) \leq y -1+\tfrac12 (y-1)^2/(y\wedge 1).$ For $y=\sqrt{x},$ we find $x\log(x)=2\sqrt{x}\cdot \sqrt{x}\log(\sqrt{x})\leq 2(x-\sqrt{x})+(1\vee \sqrt{x})(\sqrt{x}-1)^2.$ Consequently, for each $B\geq 0,$
\begin{align*}
	\KL_B(P,Q)
	&= \int_{p^a/q \leq e^B}  \frac{p^a}{q} \log\Big(\frac{p^a}{q}\Big) q+B \int_{dP/dQ> e^B}\, dP \\
	&\leq  2e^{B/2}H^2(P,Q)+2\int_{p^a/q \leq e^B} p-\sqrt{pq}+B \int_{dP/dQ> e^B}\, dP.
\end{align*}
If $\int_{p^a/q \leq e^B} p^a-\sqrt{p^aq} \leq 0,$ we can use that $H^2(P,Q) \geq \tfrac 12 \int_{p/q\geq e^B} (\sqrt{p}-\sqrt{q})^2 \geq  \tfrac 12 \int_{p/q\geq e^B} p(1-e^{-B/2})^2$ and hence 
\begin{align*}
	\KL_B(P,Q)\leq 2\Big(e^{B/2}+(1-e^{-B/2})^{-2} \Big)H^2(P,Q).
\end{align*}
Otherwise, if $\int_{p^a/q \leq e^B} p^a-\sqrt{p^aq}> 0,$ we can upper bound 
\begin{align*}
	\KL_B(P,Q)
	&\leq  2e^{B/2}H^2(P,Q)+B(1-e^{-B/2})^{-1}\int_{p^a/q \leq e^B} p-\sqrt{pq}+B \int_{dP/dQ> e^B} \, dP \\
	&\leq  2e^{B/2}H^2(P,Q)+B(1-e^{-B/2})^{-1}\int p-\sqrt{pq} \\
	&= \Big(2e^{B/2}+B(1-e^{-B/2})^{-1}\Big) H^2(P,Q).
\end{align*}
The result now follows by observing that since $B\geq 2$, both $B(1-e^{-B/2})^{-1}$ and $2(1-e^{-B/2})^{-2}$ are less than $2e^{B/2}.$
\end{proof}

\begin{propositie}\label{P: LogSoftmax Lipschitz}
Recall that $\Phi$ denotes the softmax function. The function $\log(\bPhi(\cdot)):\mathbb{R}^K\rightarrow \mathbb{R}^K$ satisfies $|\log(\bPhi(\bx))-\log(\bPhi(\by))|_{\infty}\leq K\|\bx-\by\|_{\infty}$.
\end{propositie}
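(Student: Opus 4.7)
The plan is to exploit the explicit closed form of the logarithm of the softmax. Directly from the definition of $\bPhi$, one has
\begin{equation*}
\log \Phi_k(\bx) = x_k - \log\Bigl(\sum_{j=1}^K e^{x_j}\Bigr),
\end{equation*}
so writing $\ell(\bx) := \log(\sum_{j=1}^K e^{x_j})$ for the log-sum-exp, I would decompose
\begin{equation*}
\log \Phi_k(\bx) - \log \Phi_k(\by) = (x_k - y_k) - \bigl(\ell(\bx) - \ell(\by)\bigr),
\end{equation*}
and bound the two terms separately.

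The first term is trivially bounded by $\|\bx-\by\|_{\infty}$. For the second, the key observation is that $\partial_{x_j} \ell(\bx) = \Phi_j(\bx)$, so the Jacobian of $\log \bPhi$ has entries $\partial_{x_j} \log \Phi_k(\bx) = \delta_{jk} - \Phi_j(\bx) \in [-1,1]$. The mean value inequality, together with the crude bound $|\delta_{jk} - \Phi_j(\bz)| \leq 1$, then gives
\begin{equation*}
|\log \Phi_k(\bx) - \log \Phi_k(\by)| \leq \sum_{j=1}^K |x_j - y_j| \leq K \|\bx - \by\|_{\infty},
\end{equation*}
which is the claim. Taking the maximum over $k$ yields the stated inequality.

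This is a routine Lipschitz estimate and there is no real obstacle; the only care needed is to recognize the simplification provided by the additive structure $\log \Phi_k(\bx) = x_k - \ell(\bx)$, which avoids the need for any finer inequality on the softmax itself. (A sharper argument using $\|\nabla \ell\|_1 = 1$ would give Lipschitz constant $2$ instead of $K$, but the looser constant $K$ is what the paper uses and is sufficient for downstream applications.)
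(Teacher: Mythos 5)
Your proof is correct and follows essentially the same route as the paper: both arguments bound the partial derivatives $\partial_{x_j}\log\Phi_k(\bx)=\delta_{jk}-\Phi_j(\bx)$ in absolute value by one and then sum over the $K$ coordinates via the mean value inequality to obtain the Lipschitz constant $K$ with respect to $\|\cdot\|_\infty$. Your closing remark that exploiting $\|\nabla\ell\|_1=1$ would improve the constant to $2$ is also correct, but, as you note, the constant $K$ suffices for the covering-number bound where this proposition is used.
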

\begin{proof}
Consider the composition of the logarithm with the softmax function, that is,
\begin{equation*}
\begin{aligned}
\left(\log\left(\frac{e^{x_1}}{\sum_{j=1}^Ke^{x_j}}\right),\cdots,\log\left(\frac{e^{x_K}}{\sum_{j=1}^Ke^{x_j}}\right)\right).
\end{aligned}
\end{equation*}
It holds for $k,i\in\{1,\cdots,K\}$, $i\neq k$ that
\begin{equation*}
\begin{aligned}
    &\frac{\partial}{\partial x_k}\log\left(\frac{e^{x_k}}{\sum_{j=1}^Ke^{x_j}}\right)=1-\frac{e^{x_k}}{\sum_{j=1}^Ke^{x_j}},\\
&\frac{\partial}{\partial x_k}\log\left(\frac{e^{x_i}}{\sum_{j=1}^Ke^{x_j}}\right)=-\frac{e^{x_k}}{\sum_{j=1}^Ke^{x_j}}.
\end{aligned}
\end{equation*}
The partial derivatives are bounded in absolute value by one. The combined log-softmax function is therefore Lipschitz continuous (w.r.t to $\|\cdot\|_\infty$ norm for vectors) with Lipschitz constant bounded by $K.$
\end{proof}

\begin{proof}[Proof of Lemma \ref{L: Covering Number bound}]
We start proving the first bound. Notice that $g\in\log(\mF_{\bPhi}(L,\bfm,s))$ means that there exists a ReLU network $f_g\in \mF_{\id}(L,\bfm,s)$ such that $g(\mathbf{x})=\log(\bPhi(f_g(\bx)))$. By Lemma 5 of \cite{NonParametricRegressionReLU} it holds that $\mathcal{N}(\delta/(2K),\mF_{\id}(L,\bfm,s),\|\cdot\|_{\infty})\leq (4\delta^{-1}K(L+1)V^2)$.
Let $\delta>0$. Denote by $(\bf_j)_{j=1}^{\mathcal{N}_n}$ the centers of a minimal $\delta/(2K)$-covering of $\mF_{\id}(L,\bfm,s)$ with respect to $\|\cdot\|_{\infty}$. Triangle inequality gives that for each $\bf_j$ there exists a $\widehat{\bf_j}\in\mF_{\id}(L,\bfm,s)$ such that $(\widehat{\bf_j})_{j=1}^{\mathcal{N}_n}$ is an interior $\delta/K$-cover of $\mF_{\id}(L,\bfm,s)$. Let $\bg\in \log(\mF_{\bPhi}(L,\bfm,s))$. By the cover property, there exists a $j\in\{1,\cdots,\mathcal{N}_n\}$ such that $\|f_g-\widehat{f}_j\|\leq \delta/K$.
Proposition \ref{P: LogSoftmax Lipschitz} yields:
$$\|\bg-\log(\Phi(\widehat{\bf}_j))\|_{\infty}=\|\log(\Phi(\bf_g))-\log(\Phi(\widehat{\bf}_j))\|_{\infty}\leq K \|f_g-\widehat{f}_j\|_{\infty}\leq \delta.$$
Since $\bg\in \log(\mF_{\bPhi}(L,\bfm,s))$ was arbitrary and $\widehat{f}_j\in\mF_{\id}(L,\bfm,s)$ for $j=1,\cdots,\mathcal{N}_n$, this means that $(\log(\Phi(\widehat{\bf}_j))$ is an internal $\delta$-cover for $\log(\mF_{\bPhi}(L,\bfm,s))$ with respect to $\|\cdot\|_{\infty}$.
Hence
$$\mathcal{N}(\delta,\log(\mF_{\Phi}(L,\bfm,s)),\|\cdot\|_{\infty})\leq \mathcal{N}(\delta/(2K),\mF_{\id}(L,\bfm,s),\|\cdot\|_{\infty})\leq (4\delta^{-1}K(L+1)V^2).$$
Now we consider the second bound of the lemma. Using that $m_0=d$, $m_{L+1}=K$ and by removing inactive nodes, Proposition \ref{P: Inactive Node Removal}, we get that $m_{\ell}\leq s$ for $s=1,\cdots,L$, and thus
\begin{equation*}
    V\leq dKs^L2^{L+2}.
\end{equation*} Substituting this in the first bound and taking the logarithm yields the result.
\end{proof}

\end{document}